\documentclass[11pt,a4paper,reqno]{amsart}
\title[q-deformed]{Topological model for q-deformed rational number and categorification}
\author{Li Fan}
\address{Fl: Department of Mathematical Sciences, Tsinghua University, 100084 Beijing, China.}
\email{fan-l17@tsinghua.org.cn}
\author{Yu Qiu}
\address{Qy: Yau Mathematical Sciences Center and Department of Mathematical Sciences, Tsinghua University, 100084 Beijing, China. \&  Beijing Institute of Mathematical Sciences and Applications, Yanqi Lake, Beijing, China}
\email{yu.qiu@bath.edu}
\usepackage{amsfonts}
\usepackage{amsmath,amssymb,amsthm,amsxtra}
\usepackage{float}
\usepackage[colorlinks, linkcolor=blue!72,anchorcolor=orange,
    citecolor=red,urlcolor=Emerald, bookmarksopen,bookmarksdepth=2]{hyperref}
\usepackage[usenames,dvipsnames]{xcolor}
\usepackage{enumitem}
\usepackage{geometry,array}
\usepackage{graphicx}
\usepackage{subfigure}
\usepackage{bookmark}
\usepackage{tikz}
\usepackage{url}

\usetikzlibrary{matrix,positioning,decorations.markings,arrows,decorations.pathmorphing,	
    backgrounds,fit,positioning,shapes.symbols,chains,shadings,fadings,calc}
\tikzset{->-/.style={decoration={  markings,  mark=at position #1 with
    {\arrow{>}}},postaction={decorate}}}
\tikzset{-<-/.style={decoration={  markings,  mark=at position #1 with
    {\arrow{<}}},postaction={decorate}}}
\usepackage[all]{xy}
\usetikzlibrary{cd}
\usepackage{cleveref}


\theoremstyle{plain}
\newtheorem{theorem}{Theorem}[section]

\newtheorem{lemma}[theorem]{Lemma}
\newtheorem{corollary}[theorem]{Corollary}
\newtheorem{proposition}[theorem]{Proposition}
\newtheorem{propdef}[theorem]{Proposition/Definition}
\newtheorem{lemdef}[theorem]{Lemma/Definition}

\theoremstyle{definition}
\newtheorem{definition}[theorem]{Definition}

\newtheorem{example}[theorem]{Example}

\newtheorem{remark}[theorem]{Remark}
\newtheorem{notations}[theorem]{Notations}
\numberwithin{equation}{section}

\newtheorem{construction}[theorem]{Construction}

\newtheorem{setting}[theorem]{Setting}
\newtheorem*{con}{Conventions}

\def\hh{\mathcal}

\def\<{\langle}
\def\>{\rangle}
\def\={\simeq}
\def\s{\simeq_s}
\def\to{\rightarrow}

\def\NN{\mathbb{N}}
\def\ZZ{\mathbb{Z}}
\def\QQ{\mathbb{Q}}
\def\CC{\mathbb{C}}
\def\XX{\mathbb{X}}
\def\PP{\mathbb{P}}
\def\RR{\mathbb{R}}

\renewcommand{\k}{\mathbf{k}}

\newcommand{\D}{\operatorname{\hh{D}}}

\def\dim{\operatorname{dim}}


\def\Aut{\operatorname{Aut}}

\def\Hom{\operatorname{Hom}}

\def\Ext{\operatorname{Ext}}

\def\ind{\operatorname{ind}}

\def\dind{\ind^{\ZZ^2}}
\newcommand{\sind}{\operatorname{ind}^\XX}

\def\Br{\operatorname{Br}}

\def\PSL{\operatorname{PSL}_2(\ZZ)}


\newcommand{\Cone}{\operatorname{Cone}}

\newcommand{\A}{\operatorname{\hh{A}}}

\newcommand{\pvd}{\operatorname{pvd}}

\newcommand\Sph{\operatorname{Sph}}

\newcommand{\Tri}{\bigtriangleup}

\def\SSo{\mathbf{S}_\Tri}                       
\newcommand{\ST}{\operatorname{ST}}        
\newcommand{\BT}{\operatorname{BT}}        
\newcommand{\MCG}{\operatorname{MCG}}
\newcommand{\Int}{\operatorname{Int}}

\newcommand\ho[1]{\operatorname{H}_{#1}}

\def\surf{\mathbf{S}}                       
\def\M{\mathbf{M}}
\def\bdy{\partial\mathbf{S}}
\def\isurf{\surf^\circ}
\def\surfo{{\mathbf{S}}_\Tri}
\def\surfoi{\surfo^\circ}

\def\DMS{\surfo^\Lambda}

\def\PTSO{\mathbb{P}T(\surf\setminus\Tri)}
\def\PTSt{\mathbb{P}T\SSo}
\def\wPTSt{\widehat{\PTSt}}
\def\PTSt{\mathbb{P}T\SSo}
\newcommand{\PTSo}[1]{\mathbb{P}{T_{#1}}\surfo}
\def\CA{\operatorname{CA}}
\def\wXCA{\widehat{\CA}}      
\newcommand{\qqInt}{\operatorname{Int}^{\q}}
\def\ii{\varrho} 
\def\sii{\varsigma} 

\def\A{{\mathbf{A}}}

\def\Li{\Lambda_1} 
\def\bc{\Sigma_\Delta}
\def\ubc{\widetilde{\Sigma_\Delta}}

\def\cpi{\pi}

\newcommand{\qdH}{\dim^{\q}\Hom^{\ZZ^2}}    
\def\occq{\operatorname{occ}_{q}}
\def\uhom{\overline{\operatorname{hom}}_q}

\def\Xp{X_{\frac{p}{q}}}
\def\Xq{X_{\frac{r}{s}}}
\def\Xqq{X_{\frac{u}{v}}}
\def\Xqs{X_{\frac{p}{q}\oplus\frac{u}{v}}}
\def\Xze{X_{\zero}}
\def\Xin{X_{\infi}}
\def\Xon{X_{\frac{1}{1}}}
\def\qSph{\widehat{\Sph}}

\def\dy{v_1}   
\def\de{v_2}
\def\ds{v_3}
\def\di{z_{\infi}}
\def\dii{z_*}
\def\diii{z_{\zero}}
\def\deci{z_{\infi}}
\def\decs{z_*}
\def\decz{z_{\zero}}

\def\d{z}
\def\pn{p}

\def\pt{z}

\def\disk{\mathbf{D}_{3}}
\def\diski{\mathbf{D}_{3}^\circ}

\def\DT{\operatorname{DT}}
\def\Z{\operatorname{Z}}
\newcommand{\FG}{\operatorname{FG}}
\newcommand{\FGv}{\FG_0}
\def\ue{\eta}
\def\us{\sigma}
\def\ut{\tau}
\def\ug{\gamma}
\def\ua{\alpha}

\def\wg{\widehat{\gamma}}

\def\lwa{\widehat{\alpha}}

\def\lwe{\widehat{\eta}}

\def\lwg{\widehat{\gamma}}
\def\lws{\widehat{\sigma}}
\def\lwt{\widehat{\tau}}
\def\lwth{\widehat{\theta}}
\def\lc{\widehat{c}}
\def\lwq{\lwe_{\frac{r}{s}}}
\def\lwp{\lwe_{\frac{p}{q}}}
\def\lwqq{\lwe_{\frac{u}{v}}}
\def\lwqs{\lwe_{\frac{p}{q}\oplus \frac{u}{v}}}
\def\lwez{\lwe_{\zero}}
\def\lwei{\lwe_{\infi}}
\def\wgz{\wg_{\zero}}
\def\wgi{\wg_{\infi}}

\def\lwte{\widehat{\ut}\wedge\widehat{\ue}}

\def\txq{$\q$}
\def\rat{\frac{r}{s}}
\def\q{\mathfrak{q}}               
\def\qz{q_1}
\def\qx{q_2}
\def\qd{\q}
\def\qid{(\q^{-1})}
\def\qr{\operatorname{\textbf{R}}_{\q}}
\def\qs{\textbf{S}_{\q}}
\def\qrb#1{\overline{\textbf{R}}_{\q}^{\flat}(#1)}
\def\qsb#1{\overline{\textbf{S}}_{\q}^{\flat}(#1)}

\def\uQ{\overline{\QQ}}
\def\upQ{\overline{\QQ^{\geq 0}}}
\def\ugQ{\QQ^+}
\def\Ti{T}
\def\gi{t_1}
\def\gii{t_2}
\def\PSLq{\operatorname{PSL}_{2,\q}(\ZZ)}
\def\gqi{t_{1,\q}}
\def\gqii{t_{2,\q}}
\def\zero{0}
\def\infi{\infty}
\def\pq{p/q}
\def\rs{\frac{r}{s}}
\def\rsi{u/v}

\def\Gro{\operatorname{K}}

\tikzcdset{arrow style=tikz, diagrams={>=stealth}}
\def\nn{node{$\bullet$}}            
\def\ww{node[white]{\tiny{$\bullet$}}node{\tiny{$\circ$}}}
\setcounter{section}{0}


\begin{document}
\maketitle

\def\Qy#1{\textcolor{blue}{Qy: #1}}
\setlength\parindent{0em}
\setlength{\parskip}{5pt}

\def\DGA{\Gamma_\XX A_2}

\newcommand{\cat}[1]{\D_{#1}(A_2)}
\newcommand{\bcat}[1]{\overline{\D}_{#1}(A_2)}


\begin{abstract}
Let $\disk$ be a bigraded 3-decorated disk with an arc system $\A$.
We associate a bigraded simple closed arc $\lwe_{\frac{r}{s}}$ on $\disk$
to any rational number $\frac{r}{s}\in\uQ=\QQ\cup\{\infty\}$.
We show that the right (resp. left) $q$-deformed rational numbers associated to $\frac{r}{s}$,
in the sense of \cite{MGO20} (resp. \cite{BBL}) can be naturally calculated by
the \txq-intersection between $\lwe_{\frac{r}{s}}$ and $\A$ (resp. dual arc system $\A^*$).
The Jones polynomials of rational knots can be also given by such intersections.
Moreover, the categorification of $\lwe_{\frac{r}{s}}$ is given by the spherical object $X_{\frac{r}{s}}$ in the Calabi-Yau-$\XX$ category of Ginzburg dga of type $A_2$. Reduce to CY-2 case, we recover result of \cite{BBL} with a slight improvement.
\end{abstract}
\section{Introductions}
The notion of (right) $q$-deformed rational numbers $[\frac{r}{s}]^{\sharp}$ was originally introduced by Morier-Genoud and Ovsienko in \cite{MGO20} via continued fractions.
They also developed $q$-deformations to irrational numbers in \cite{MGO19} by the convergency property.
Such $q$-deformations own many good combinatorial properties and are related to a wide variety of areas, such as the Farey triangulation, $F$-polynomials of cluster algebras and the Jones polynomial of rational (two-bridge) knots \cite{MGO20}.
Motivated by the study of compactification of spaces of stability conditions,
Bapat, Becker and Licata \cite{BBL} introduced a twin notion, the left $q$-deformation $[\frac{r}{s}]^{\flat}$, which also shares all the good properties of $[\frac{r}{s}]^{\sharp}$. They showed that the two $q$-deformations can be both described via the action of $\PSLq$ by fractional linear transformations. Moreover, Farey graph plays an important role in the definition of $q$-deformations, where the edges are assigned weights according to some iterative rules \cite{MGO20}.

On the other hand, the homotopy classes of simple closed curves on torus with at most one boundary can be parameterized by $\uQ=\QQ\cup\{\infi\}$. We aim to give a topological realization of $q$-deformations and their categorification.
The topological model we use is the decorated surface $\surfo$ with bigrading introduced by Khovanov and Seidel in \cite{KS}.
The bigrading of arcs provides bi-indices for their intersections, which we call $\q$-intersections.
We consider the $A_2$ case, where $\surfo=\disk$ is a disk with three decorations and
the set of simple closed arcs on $\disk$ can be parameterized by $\uQ$.
We show that the right/left $q$-deformed rationals can be naturally calculated by the \txq-intersections between
corresponding arcs (\Cref{thm:leftint} and \Cref{thm:rightint}).
The topological realization directly implies many combinatorial properties of $q$-deformations, including positivity and specialization (\Cref{cor:comp}). 
Surprisingly, the bi-index always collapses into one, 
which is not obvious from the construction/definition of $\q$-intersection.

For the categorification, we consider the Calabi-Yau-$\XX$ category $\D_{\XX}(\surfo)$ associated to $\surfo$ (cf. \cite{KS, IQZ}),
which is the perfect valued derived category of the bigraded Ginzburg algebra constructed from $\surfo$.
The $\XX$-spherical objects in $\D_{\XX}(\surfo)$ correspond to the bigraded simple closed arcs in $\surfo$,
and their $\q$-dimensions of $\Hom$-spaces equal to the $\q$-intersections between the corresponding arcs \cite{KS, IQZ}.
One can specialize $\XX=N$, and $\D_{\XX}(\surfo)$ becomes a Calabi-Yau-$N$ category, for any integer $N\geq 2$.
When $N=3$, $\D_{3}(\surfo)$ provides an additive categeorification of cluster algebras of surface type
(cf. e.g. \cite{QQ}).
When $\surfo=\disk$ and $N=2$, we recover \cite{BBL}'s result (with a slight improvement).

The paper is organized as follows.
In section 2,
we recall several equivalent definitions of left and right $\q$-deformed rationals from \cite{MGO20, BBL},
via continued fractions, braid twist action and $\q$-weighted Farey graph respectively.
In section 3, we recall the graded decorated surface in the sense of \cite{KS, IQZ}
and prove the main results.
In section 4, we give the categorification and in section 5, we discuss reduction and the relation with Jones polynomials.

\subsection*{Acknowledgment}
Fl is grateful to Qy for leading her into this research area and providing her lots of help and supervisions. This work is inspired by the work of Morier-Genoud, Ovsienko \cite{MGO20} and Bapat, Becker, Licata \cite{BBL}.
Qy is supported by National Key R\&D Program of China (No.2020YFA0713000) and National Natural
Science Foundation of China (No.12031007 and No.12271279).

\section{\txq-deformed rationals and Farey graph}
In the paper, we fix the following conventions.
\begin{con}
\begin{itemize}
  \item Let \txq~ be a formal parameter.
  \item A rational number always belongs to $\uQ:=\QQ\cup \{\infty\}$. We also denote $\upQ:=\QQ^{\geq 0}\cup \{\infty\}$. We usually state the results for $\uQ$ but prove the non-negative case since the negative case holds by symmetry.
  \item We denote a rational number by $\rs$, including the exceptional cases when $0=\frac{0}{1}$ and $\infi=\frac{1}{0}$. We assume that $\rs$ is irreducible.
\end{itemize}
\end{con}
\subsection{Right and left \txq-deformed rationals}
We first recall the definitions of right and left \txq-deformations of rational numbers via finite continued fractions and formulate their basic properties.
For a positive rational $\rs$, it can be expressed as an expansion of continued fraction as
\begin{equation}\label{expansion}
\rat\quad=\quad a_1 + \cfrac{1}{a_2  + \cfrac{1}{\ddots +\cfrac{1}{a_{2m}} }}
\colon=[a _1,\ldots,a_{2m}],
\end{equation}
for $a_1\in\NN$ and $a_2,\cdots,a_{2m}\in\NN\setminus \{0\}$,
which is known as the ({\it regular}) continued fraction (expression). For the exceptional cases, we denote $0=[-1,1]$ and $\infi=[]$.

For a non-negative integer $a$, the \emph{right \txq-deformation} is defined as
$$[a]_{\q}^{\sharp}:=\frac{1-\qd^a}{1-\qd}=1+\qd+\qd^{2}+\cdots+\qd^{a-1},$$
and the corresponding \emph{left \txq-deformation} is defined as \[[a]_{\q}^{\flat}:=\frac{1-\qd^{a-1}+\qd^{a}-\qd^{a+1}}{1-\qd}=1+\qd+\cdots+\qd^{a-2}+\qd^a.\]
\begin{definition}[{\cite{MGO20,BBL}}]\label{qdef}
Let $\rat\in\ugQ$ be a rational number with continued fraction expansion $[a_{1}, \ldots, a_{2m}]$.
\begin{enumerate}
\item We define its \emph{right \txq-deformation} by the following formula:
\begin{equation}\label{qr}
\Big[\rat\Big]^{\sharp}_{\q} :=
      [a_{1}]^{\sharp}_{\q}+\cfrac{\qd^{a_{1}}}{[a_{2}]^{\sharp}_{\q^{-1}}+\cfrac{\qd^{-a_{2}}}{[a_{3}]^{\sharp}_{\q}+\cfrac{\qd^{a_{3}}}{[a_{4}]^{\sharp}_{\q^{-1}}+\cfrac{\qd^{-a_{4}}}{\cfrac{\ddots}{[a_{2m-1}]^{\sharp}_{\q}+\cfrac{\qd^{a_{2m-1}}}{[a_{2m}]^{\sharp}_{\q^{-1}}}}}}}}\quad.
\end{equation}
 \item We define its \emph{left $\q$-deformation} by the following formula:
 \begin{equation}\label{ql}
\Big[\rat\Big]_{\q}^{\flat}:=[a_{1}]^{\sharp}_{\q}+\cfrac{\qd^{a_{1}}}{[a_{2}]^{\sharp}_{\q^{-1}}+\cfrac{\qd^{-a_{2}}}{[a_{3}]^{\sharp}_{\q}+\cfrac{\qd^{a_{3}}}{[a_{4}]^{\sharp}_{\q^{-1}}+\cfrac{\qd^{-a_{4}}}{\cfrac{\ddots}{[a_{2m-1}]^{\sharp}_{\q}+\cfrac{\qd^{a_{2m-1}}}{[a_{2m}]^{\flat}_{\q^{-1}}}}}}}}\quad.
\end{equation}
\end{enumerate}
We normalize them as
\[\left[\rat\right]_{\q}^{\sharp}=\frac{\qr^{\sharp}(r/s)}{\qs^{\sharp}(r/s)}, \quad \left[\rat\right]_{\q}^{\flat}=\frac{\qr^{\flat}(r/s)}{\qs^{\flat}(r/s)},\]
so that the denominators are polynomials of $\q$ with lowest non-zero constant term. For $0$ and $\infty$, we set
\begin{gather*}
    \qr^{\sharp}(\zero)=0,\quad \qs^{\sharp}(\zero)=1,\quad \qr^{\flat}(\zero)=\q-1,\quad\qs^{\flat}(\zero)=\q;\\
    \qr^{\sharp}(\infi)=1, \quad \qs^{\sharp}(\infi)=0,\quad\qr^{\flat}(\infi)=1,\quad\qs^{\flat}(\infi)=1-\q.
\end{gather*}
\end{definition}

Next, we consider the group
\begin{equation*}
\PSL=\left\{\begin{pmatrix}a&b\\c&d\end{pmatrix}|a,b,c,d\in\ZZ,ad-bc=1\right\},
\end{equation*}
which is generated by
\[\gi:=\begin{pmatrix}1&1\\0&1\end{pmatrix},\quad \gii:=\begin{pmatrix}1&0\\-1&1\end{pmatrix}.\]
It acts on rational numbers $\uQ$ by linear fractional transformation as
\begin{equation}\label{eq:fl}
\begin{pmatrix}a&b\\c&d\end{pmatrix}\cdot\bigg(\cfrac{r}{s}\bigg)=\frac{ar+bs}{cr+ds},
\end{equation}
where $\frac{r}{s}\in\uQ$ and $\begin{pmatrix}a&b\\c&d\end{pmatrix}\in\PSL$.
For a rational number $\frac{r}{s}\in\upQ$ with continued fraction expansion as \eqref{expansion},
it is well-known that (cf. \cite[Proposition 2.2]{BBL})
\begin{equation}\label{eq:expre}
\cfrac{r}{s}=\gi^{a_1}\gii^{-a_2}\gi^{a_3}\gii^{-a_4}\cdots \gi^{a_{2m-1}} \gii^{-a_{2m}}(\cfrac{1}{0}).
\end{equation}

\begin{propdef}[{\cite{MGO20,BBL}}]\label{def2}
Consider the $\q$-deformation $\PSLq$ of $\mathrm{PSL}_{2}(\ZZ)$, which is generated by
\[\displaystyle
\gqi=\begin{pmatrix}\q&1\\0&1\\\end{pmatrix},\quad\gqii=\begin{pmatrix}1&0\\-\q&\q\\\end{pmatrix}.\]
For a rational number $\frac{r}{s}\in\upQ$ with expression \eqref{expansion}, we have
\begin{equation}
\begin{cases}
\left[\displaystyle\frac{r}{s}\right]^{\sharp}_\q&=\gqi^{a_1}\gqii^{-a_2}\gqi^{a_3}\gqii^{-a_4}
        \cdots \gqi^{a_{2m-1}}\gqii^{-a_{2m}}\left(\displaystyle\frac{1}{0}\right),\\
\left[\displaystyle\frac{r}{s}\right]_\q^{\flat}&=\gqi^{a_1}\gqii^{-a_2}\gqi^{a_3}\gqii^{-a_4}
        \cdots \gqi^{a_{2m-1}} \gqii^{-a_{2m}}\left(\displaystyle\frac{1}{1-\q}\right).
\end{cases}
\end{equation}
\end{propdef}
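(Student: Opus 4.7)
The plan is to prove both identities by induction on the length $2m$ of the continued fraction, after computing the matrix powers of the generators of $\PSLq$ explicitly. A direct induction on $a\ge 0$, using the telescoping identity $[a+1]^\sharp_\q = [a]^\sharp_\q + \q^a$, gives
\begin{equation*}
\gqi^a = \begin{pmatrix}\q^a & [a]^\sharp_\q\\ 0 & 1\end{pmatrix}, \qquad \gqii^{-a} = \begin{pmatrix}1 & 0\\ [a]^\sharp_{\q^{-1}} & \q^{-a}\end{pmatrix}.
\end{equation*}
Translating into the fractional linear action \eqref{eq:fl} then yields the two key identities
\begin{equation*}
\gqi^a \cdot y \;=\; [a]^\sharp_\q + \q^a y, \qquad \gqii^{-a}\cdot y \;=\; \cfrac{1}{[a]^\sharp_{\q^{-1}} + \q^{-a}/y},
\end{equation*}
valid for any $y\in\uQ$. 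Each of these operations precisely adjoins one level of a continued fraction, with $\gqi^a$ working in base $\q$ and $\gqii^{-a}$ in base $\q^{-1}$.

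With these tools in hand, the induction is transparent. Processing the product from the inside out, the innermost factor gives $\gqii^{-a_{2m}}(\tfrac{1}{0}) = 1/[a_{2m}]^\sharp_{\q^{-1}}$, and each successive application of $\gqi^{a_{2k-1}}$ or $\gqii^{-a_{2k}}$ adjoins one more layer with the correct alternating base. A short induction on $m$, comparing the result after $\gqi^{a_1}\gqii^{-a_2}$ has been applied against the two-level unfolding of \eqref{qr}, finishes the right case. The left case differs only in the starting value $\tfrac{1}{1-\q}$; a direct computation
\begin{equation*}
\gqii^{-a_{2m}}\cdot \tfrac{1}{1-\q} \;=\; \cfrac{1}{[a_{2m}]^\sharp_{\q^{-1}} + \q^{-a_{2m}}(1-\q)} \;=\; \cfrac{1}{[a_{2m}]^\flat_{\q^{-1}}},
\end{equation*}
using the elementary identity $[a]^\flat_{\q^{-1}} - [a]^\sharp_{\q^{-1}} = \q^{-a}(1-\q)$, produces precisely the innermost term required by \eqref{ql}; the remainder of the induction is identical to the right case.

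The main delicacy is the careful bookkeeping of the alternation between $\q$ and $\q^{-1}$ at successive levels, and reading ``division'' on projective coordinates correctly so that the matrix action lines up with the continued fraction recursion. Matching the matrix product against the normalization fixed in Definition~\ref{qdef}, and verifying the boundary conventions for $\rat=0$ and $\infty$, are then routine final checks.
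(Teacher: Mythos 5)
The paper states this Proposition/Definition as a result imported from \cite{MGO20,BBL} and gives no proof of its own, so there is no internal argument to compare against; your write-up supplies the missing verification. Your argument is correct and is the standard one: the closed forms $\gqi^{a}=\left(\begin{smallmatrix}\q^{a} & [a]^{\sharp}_{\q}\\ 0 & 1\end{smallmatrix}\right)$ and $\gqii^{-a}=\left(\begin{smallmatrix}1 & 0\\ {[a]^{\sharp}_{\q^{-1}}} & \q^{-a}\end{smallmatrix}\right)$ (the latter after clearing the irrelevant scalar $\q$ from $\det\gqii$) follow from the telescoping identity; the two induced fractional-linear actions $y\mapsto [a]^{\sharp}_{\q}+\q^{a}y$ and $y\mapsto 1/([a]^{\sharp}_{\q^{-1}}+\q^{-a}/y)$ reproduce exactly one layer of \eqref{qr} per factor when the product is unfolded from the inside out; and the identity $[a]^{\flat}_{\q^{-1}}-[a]^{\sharp}_{\q^{-1}}=\q^{-a}(1-\q)$ correctly converts the seed $\tfrac{1}{1-\q}$ into the innermost $\flat$-term of \eqref{ql}, after which the two inductions coincide. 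The only points worth spelling out in a full write-up are the exceptional expansions $0=[-1,1]$ and $\infty=[\,]$, where $a_{1}=-1$ is negative; your matrix formula for $\gqi^{a}$ extends verbatim to negative exponents via $[a]^{\sharp}_{\q}=(1-\q^{a})/(1-\q)$, and a direct check recovers the conventions $[0]^{\flat}_{\q}=\tfrac{\q-1}{\q}$ and $[\infty]^{\flat}_{\q}=\tfrac{1}{1-\q}$ of Definition~\ref{qdef}, so these really are the routine checks you claim.
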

\subsection{\txq-deformations via Farey graph}
The classical {\it Farey graph} $\FG$ is an infinite graph with vertices set
$$\FGv=\uQ.$$
There is an edge between $\frac{p}{q}$ and $\frac{u}{v}$ if and only if $pv-uq=\pm1$ (see Figure \ref{fig:FG}).
If $\frac{p}{q}$ and~$\frac{u}{v}$ are connected by an edge, we define their \emph{Farey sum} by
$$\frac{p}{q}\oplus\frac{u}{v}:=\frac{p+u}{q+v}.$$
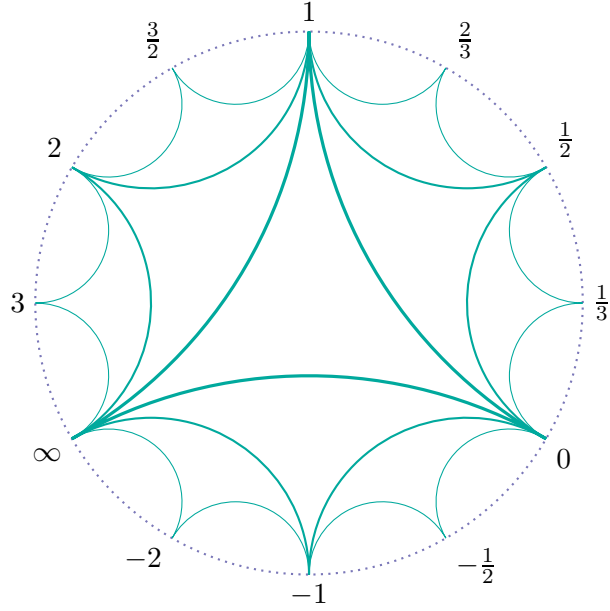
\begin{figure}\centering
\begin{tikzpicture}[scale=0.6,
 arrow/.style={->,>=stealth,thick,blue}, 
 border/.style={Periwinkle,dotted,thick},
 c-vrtx/.style={blue},
 c-arc/.style={Emerald}]
\newcommand{\vrtx}{\bullet}
\coordinate (O) at (0,0);
\coordinate (S1) at (0,6) ;
\draw [border] (O) circle (6cm);
\draw (0:6cm) node[right] {$\frac{1}{3}$} (-30:6cm) node[below right] {$0$} (30:6cm) node[above right] {$\frac{1}{2}$} (60:6cm) node[above right] {$\frac{2}{3}$}
    (90:6cm) node[above] {$1$} (120:6cm) node[above left] {$\frac{3}{2}$}
    (-90:6cm) node[below] {$-1$} (-60:6cm) node[below right] {$-\frac{1}{2}$} (-120:6cm) node[below left] {$-2$}
    (210:6cm) node[below left] {$\infty$} (180:6cm) node[left] {$3$} (-210:6cm) node[above left] {$2$};
\draw[c-arc,very thick] (S1) \foreach \j in {1,...,3}
    {arc(360/3-\j*360/3+180:360-\j*360/3:10.3923cm)}--cycle;
\draw[c-arc,thick] (S1) \foreach \j in {1,...,6}
    {arc(360/6-\j*360/6+180:360-\j*360/6:3.4641cm)}--cycle;
\draw[c-arc] (S1) \foreach \j in {1,...,12}
    {arc(360/12-\j*360/12+180:360-\j*360/12:1.6077cm)}--cycle;
\end{tikzpicture}
\caption{The Farey graph}
\label{fig:FG}
\end{figure}

Moreover, $\FGv$ is parametrized by homotopy classes of simple closed arcs on torus with at most one boundary and the edges are those arcs with intersection number one. $\PSL$ acts on $\FG$ by \eqref{eq:fl} taking one edge to another. In particular, if $\Ti\in\PSL$ takes the form
\[\Ti_{\frac{r}{s}}=\begin{pmatrix}1+rs&-r^2\\s^2&1-rs\end{pmatrix},\]
then it is a rotation which fixes $\frac{r}{s}$.
\begin{lemdef} [{\cite[Section 2.2]{MGO20}}]\label{rmk:deco}
Let $\frac{r}{s}\in\ugQ$ be any rational number with continued fraction expansion as \eqref{expansion}. Then it can be uniquely written as Farey sum of two rationals $\frac{p}{q},\frac{u}{v}\in\upQ$, i.e.
\[\rat=\frac{p}{q}\oplus\frac{u}{v},\]
with $uq-pv=1$ and $\frac{p}{q}<\frac{r}{s}<\frac{u}{v}$. In fact,
\begin{equation}
\frac{p}{q}=
\begin{cases}
[a_1,a_2,\ldots,a_{2m-2}+1],&~\mbox{if}~a_{2m-1}=1\mbox{and}~m>1;\\
[a_1,a_2,\ldots,a_{2m-1}-1,1],&~\mbox{otherwise},
\end{cases}
\end{equation}
and
\begin{equation}
\frac{u}{v}=
\begin{cases}
[a_1,a_2,\ldots,a_{2m-1},a_{2m}-1],&~\mbox{if}~a_{2m}\geq 2;\\
[a_1,a_2,\ldots,a_{2m-2}],&~\mbox{if}~a_{2m}=1.
\end{cases}
\end{equation}
Moreover, there is an associated integer defined as
\begin{equation}
l=l(\frac{r}{s})=
\begin{cases}
0,&~\mbox{if}~a_{2m}\geq 2;\\
a_{2m-1},&~\mbox{if}~a_{2m}=1.
\end{cases}
\end{equation}
In particular, we have $l(n+1=\frac{n}{1}\oplus\frac{1}{0})=n-1$.
\end{lemdef}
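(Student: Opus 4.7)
The plan is to establish uniqueness by a brief linear algebra argument and then produce the decomposition together with its explicit continued fraction form from the standard theory of convergents.

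\textbf{Uniqueness.} Any pair $(\frac{p}{q}, \frac{u}{v})$ satisfying the three requirements must obey $p + u = r$, $q + v = s$, and $uq - pv = 1$. Substituting $u = r - p$ and $v = s - q$ collapses these to $rq - ps = 1$, which has a unique solution $q \in \{1, \ldots, s-1\}$ by $\gcd(r,s) = 1$ (the boundary values $q = 0$ and $q = s$ correspond to the $\infty$ cases, handled separately). The remaining entries $p$, $u$, $v$ are then forced, and the inequality $\frac{p}{q} < \frac{u}{v}$ is automatic from $uq - pv = 1 > 0$.

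\textbf{Existence and the continued fraction formulas.} Let $\frac{p_k}{q_k}$ denote the $k$-th convergent of $[a_1, \ldots, a_{2m}]$, obeying the standard continuant recursions $p_k = a_k p_{k-1} + p_{k-2}$, $q_k = a_k q_{k-1} + q_{k-2}$ and the determinant identity $p_k q_{k-1} - p_{k-1} q_k = \pm 1$. In both cases I will take $\frac{p}{q} := \frac{p_{2m-1}}{q_{2m-1}} = [a_1, \ldots, a_{2m-1}]$. If $a_{2m} \geq 2$, I set $\frac{u}{v}$ to be the continuant value of $[a_1, \ldots, a_{2m-1}, a_{2m} - 1]$, explicitly $\frac{(a_{2m}-1) p_{2m-1} + p_{2m-2}}{(a_{2m}-1) q_{2m-1} + q_{2m-2}}$; if $a_{2m} = 1$, I set $\frac{u}{v} := \frac{p_{2m-2}}{q_{2m-2}} = [a_1, \ldots, a_{2m-2}]$. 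A short continuant calculation confirms $p + u = r$ and $q + v = s$; the determinant identity gives $uq - pv = 1$; and the classical alternation of convergents (odd-indexed approach from below, even-indexed from above) supplies the ordering $\frac{p}{q} < \frac{r}{s} < \frac{u}{v}$.

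\textbf{Matching the stated forms and $l$.} It remains to rewrite $\frac{p}{q} = [a_1, \ldots, a_{2m-1}]$, which has odd length, in the even-length normalization of the lemma. If $a_{2m-1} \geq 2$, the identity $[\ldots, a] = [\ldots, a-1, 1]$ yields $[a_1, \ldots, a_{2m-1}-1, 1]$; if $a_{2m-1} = 1$ (which forces $m > 1$), the collapse $[\ldots, a_{2m-2}, 1] = [\ldots, a_{2m-2}+1]$ yields $[a_1, \ldots, a_{2m-2}+1]$. The integer $l(\frac{r}{s})$ is then read off directly from the same case split. The main obstacle is purely bookkeeping: keeping track of the two continued-fraction normalizations uniformly and verifying the boundary cases $\frac{r}{s} = 0 = [-1, 1]$ and $\frac{r}{s} = \infty = []$ by direct substitution.
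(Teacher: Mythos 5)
The paper offers no proof of this statement at all: it is imported as a Lemma/Definition with a citation to [MGO20, Section 2.2]. So there is nothing internal to compare against, and your write-up has to stand on its own. It essentially does. The uniqueness reduction to $rq-ps=1$ with $\gcd(r,s)=1$ is correct (with the integer case $q=s$, $v=0$ giving the $\frac{u}{v}=\infty$ decompositions, as you note), and the existence argument via $\frac{p}{q}=\frac{p_{2m-1}}{q_{2m-1}}$, the continuant recursion $p_{2m}=a_{2m}p_{2m-1}+p_{2m-2}$, and the determinant identity $p_{2m-2}q_{2m-1}-p_{2m-1}q_{2m-2}=1$ delivers $p+u=r$, $q+v=s$, $uq-pv=1$ in both cases $a_{2m}\geq 2$ and $a_{2m}=1$; the alternation of convergents gives the ordering, and the re-normalizations $[\ldots,a]=[\ldots,a-1,1]$ and $[\ldots,b,1]=[\ldots,b+1]$ match the two displayed forms of $\frac{p}{q}$. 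This is the standard convergent argument and is, as far as one can tell, the intended one.

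One item you wave at but do not actually check, and which in fact cannot be checked as stated: the closing claim $l(n+1)=n-1$ does not follow from the displayed case formula for $l$. With the paper's even-length convention an integer $n+1$ expands as $[n,1]$, so $m=1$, $a_{2m}=a_2=1$ and the formula gives $l=a_{2m-1}=a_1=n$, not $n-1$. The value $n-1$ is the one forced by consistency with Figure~\ref{rqfar} (the edge $(0,\infty)$ carries weight $\q^{-1}$, so $l(1)=-1$) and with the recursion of \Cref{qfareydef} (e.g. $\qr^{\sharp}(n+1)=\qr^{\sharp}(n)+\q^{l+1}$ forces $l=n-1$), so the displayed definition of $l$ is off by one in the case $m=1$. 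This is an inconsistency inside the statement itself rather than a flaw in your decomposition argument, but your phrase ``read off directly from the same case split'' silently passes over it; a careful proof should either flag the discrepancy or record the corrected value of $l$ for integers.
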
On the other hand, $l$ can also be defined for an edge in $\FG$ connecting $\frac{p}{q}$ and $\frac{u}{v}$, provided $\frac{p}{q}<\frac{u}{v}$. More precisely, $l(\frac{p}{q},\frac{u}{v}):=l(\frac{p}{q}\oplus\frac{u}{v})$.

As in \cite{MGO20}, we assign a weight to each edge of the Farey graph, that goes along with the right or left $\q$-deformations associated to vertices. Then the right and left \txq-deformations can also be defined via $\q$-Farey sum.

\begin{propdef}[{\cite{MGO20}}]\label{qfareydef}
Let $\frac{r}{s}\in\ugQ$ be a rational with the decomposition $\rat=\frac{p}{q}\oplus\frac{u}{v}$ and $l=l(\frac{p}{q},\frac{u}{v})$ as above. For right \txq-deformation, we have
\emph{
\begin{equation}
\qr^{\sharp}(\frac{r}{s}):=\qr^{\sharp}(\frac{p}{q})+\qd^{l+1}\qr^{\sharp}(\frac{u}{v}),\quad\qs^{\sharp}(\frac{r}{s}):=\qs^{\sharp}(\frac{p}{q})+\qd^{l+1}\qs^{\sharp}(\frac{u}{v}).
\end{equation}}
For left \txq-deformation, if we set \emph{$\qrb{\zero}=\qs^{\flat}(\zero), \qrb{\infi}=\qs^{\flat}(\infi)$} and define
\emph{
\begin{equation}
\qrb{\rs}:=\qrb{\frac{p}{q}}+\qid^{l+1}\qrb{\frac{u}{v}},\quad\qsb{\rs}:=\qsb{\frac{p}{q}}+\qid^{l+1}\qsb{\frac{u}{v}},
\end{equation}}
then we have
\emph{
\[ \left[\rat\right]_{\q}^{\flat}=\frac{\qrb{r/s}}{\qsb{r/s}}.\]}
\end{propdef}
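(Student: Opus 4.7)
My plan is to prove both recursive formulas by induction on the length $2m$ of the regular continued fraction expansion of $\rat$, using the matrix realization from \Cref{def2} as the organizing tool.

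Set $M^{\sharp}_\q(\rat) := \gqi^{a_1} \gqii^{-a_2} \cdots \gqi^{a_{2m-1}} \gqii^{-a_{2m}}$, so that by \Cref{def2} its first column applied to $(1,0)^T$ gives, after normalization, $\bigl(\qr^{\sharp}(\rat),\, \qs^{\sharp}(\rat)\bigr)^T$. The key technical claim that I will verify by induction is that the second column of $M^{\sharp}_\q(\rat)$, after a suitable normalization by a power of $\q$, represents precisely the smaller Farey neighbor $\frac{p}{q}$ given by \Cref{rmk:deco}. Once this is established, the Farey-sum identity $\qr^{\sharp}(\rat) = \qr^{\sharp}(\frac{p}{q}) + \q^{l+1}\,\qr^{\sharp}(\frac{u}{v})$ (and the analogous identity for $\qs^{\sharp}$) amounts to expressing the first column of $M^{\sharp}_\q(\rat)$ as a linear combination of its second column and a column coming from $M^{\sharp}_\q(\frac{u}{v})$, with the power $\q^{l+1}$ emerging from tracking the $\q$-factors of $\gqi$ and $\gqii$.

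The inductive step splits into the two cases of \Cref{rmk:deco}. In Case A, $a_{2m}\geq 2$ and $l=0$; here $M^{\sharp}_\q(\rat) = M^{\sharp}_\q(\frac{u}{v})\cdot \gqii^{-1}$, so a direct $2\times 2$ matrix computation with $\gqii^{-1}$ (whose entries contribute a single factor of $\q$) yields the identity with exponent $l+1=1$. In Case B, $a_{2m}=1$ and $l=a_{2m-1}$; the matrix $M^{\sharp}_\q(\rat)$ differs from $M^{\sharp}_\q(\frac{u}{v})$ by a block $\gqi^{a_{2m-1}} \gqii^{-1}$, and expanding this product produces an overall $\q$-power of $\q^{a_{2m-1}+1}=\q^{l+1}$. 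The base cases $\rat\in\{\zero,\infi,1\}$ are checked by direct substitution using the stipulated values of $\qr^{\sharp}$ and $\qs^{\sharp}$ at $\zero$ and $\infi$.

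For the left $\q$-deformation, the same strategy applies but with the initial vector $(1,\,1-\q)^T$ in place of $(1,0)^T$ (coming from the altered final term $[a_{2m}]^\flat_{\q^{-1}}$ in \eqref{ql}) and with $\q^{-1}$ appearing in place of $\q$ at the final step. The main obstacle in both parts is the bookkeeping of normalization factors: the polynomials $\qr^{\sharp}$ and $\qs^{\sharp}$ are only determined up to overall $\q$-scaling, and the recursion must respect the lowest-non-zero-constant-term normalization at every stage. Case B is the most delicate, since both Farey neighbors shorten simultaneously and careful tracking of the $\q$-degree drop is required to match the claimed exponent $l+1$.
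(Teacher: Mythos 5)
The paper itself offers no proof of \Cref{qfareydef}: it is quoted from \cite{MGO20} (right version) and \cite{BBL} (left version), so there is no in-paper argument to measure yours against. Your matrix strategy is essentially the one underlying those sources and is viable in principle: in Case A one has $M^{\sharp}_{\q}(\rat)=M^{\sharp}_{\q}(\frac{u}{v})\cdot\gqii^{-1}$ and in Case B $M^{\sharp}_{\q}(\rat)=M^{\sharp}_{\q}(\frac{u}{v})\cdot\gqi^{a_{2m-1}}\gqii^{-1}$, and since $\gqi^{a}\gqii^{-1}$ has first column proportional to $([a+1]^{\sharp}_{\q},1)^{T}$ while the second column of $M^{\sharp}_{\q}(\rat)$ encodes $\frac{p}{q}$, the identity $[a+1]^{\sharp}_{\q}=[a]^{\sharp}_{\q}+\q^{a}$ is exactly what produces the weight $\q^{l+1}$.

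Two concrete gaps remain. First, the induction is not well-founded as stated: in Case A the expansion $\frac{u}{v}=[a_1,\dots,a_{2m}-1]$ has the \emph{same} length $2m$ as that of $\rat$, so "induction on the length $2m$" never terminates there; you must induct on $a_1+\cdots+a_{2m}$ or lexicographically on $(m,a_{2m})$. Second, the step you label as bookkeeping is in fact the substance of the proof. Writing $\mathrm{col}_1M^{\sharp}_{\q}(\rat)=\q^{e_1(\rat)}\bigl(\qr^{\sharp}(\rat),\qs^{\sharp}(\rat)\bigr)$ and $\mathrm{col}_2M^{\sharp}_{\q}(\rat)=\q^{e_2(\rat)}\bigl(\qr^{\sharp}(\frac{p}{q}),\qs^{\sharp}(\frac{p}{q})\bigr)$, your Case A and Case B computations yield the Farey-sum rule for the \emph{normalized} polynomials only if $e_1(\rat)=e_2(\rat)+1$ and $e_1(\frac{u}{v})=e_1(\rat)$; these identities must be incorporated into the inductive hypothesis and verified alongside the column identification, and they fail at the boundary of the Farey graph (e.g.\ $e_1(\infi)=0$ while $e_1(\rat)=1$ for $\rat\in\QQ^{>0}$), which is precisely where the paper's irregular conventions $0=[-1,1]$ and $l(n+1)=n-1$ intervene. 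Until the exceptional configurations with $\frac{u}{v}=\infi$ or $\frac{p}{q}=\zero$ and the seeds $\qrb{\zero}$, $\qrb{\infi}$ for the $\qid$-weighted left recursion are checked explicitly, the argument is a plausible outline rather than a proof.
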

We label a weight to each edge and a right or left \txq-deformation to each vertex in Farey graph,  which are drawn in Figure \ref{rqfar} and Figure \ref{lqfar} respectively. Here the integer $l=l(\frac{p}{q},\frac{u}{v})$ as above.
\begin{figure}[ht]
\begin{center}
\begin{tikzpicture}[scale=5]
    \draw[Emerald,very thick] (1,0) arc (0:180:.5);
    \draw  (0,0)  node[below]{$\displaystyle\left[\frac{0}{1}\right]_\q^{\sharp}$};
    \draw  (1,0)  node[below]{$\displaystyle\left[\frac{1}{0}\right]_\q^{\sharp}$};
    \draw[Emerald,thick] (1,0) arc (0:180:.25);
    \draw  (.5,0) node[below]{$\displaystyle\left[\frac{1}{1}\right]_\q^{\sharp}$};
    \draw[Emerald,thick] (.5,0) arc (0:180:.25);
\draw (.25,.25) node[above]{$1$};
\draw (.75,.25) node[above]{$1$};
\draw (.5,.5) node[above]{$\qd^{-1}$};
\begin{scope}[xshift=40]
\draw[Emerald,very thick] (1,0) arc (0:180:.5);
    \draw  (0,0)  node[below]{$\displaystyle\frac{\qr^{\sharp}(\pq)}{\qs^{\sharp}(\pq)}$};
    \draw  (1,0)  node[below]{$\displaystyle\frac{\qr^{\sharp}(\rsi)}{\qs^{\sharp}(\rsi)}$};
    \draw[Emerald,thick] (1,0) arc (0:180:.25);
    \draw  (.5,0) node[below]{$\displaystyle\frac{\qr^{\sharp}(r/s)}{\qs^{\sharp}(r/s)}$};
    \draw[Emerald,thick] (.5,0) arc (0:180:.25);
\draw (.25,.25) node[above]{$1$};
\draw (.75,.25) node[above]{$\qd^{l+1}$};
\draw (.5,.5) node[above]{$\qd^{l}$};
\end{scope}
\end{tikzpicture}
\end{center}
\caption{The right \txq-deformation via \txq-Farey sum}
    \label{rqfar}
\end{figure}
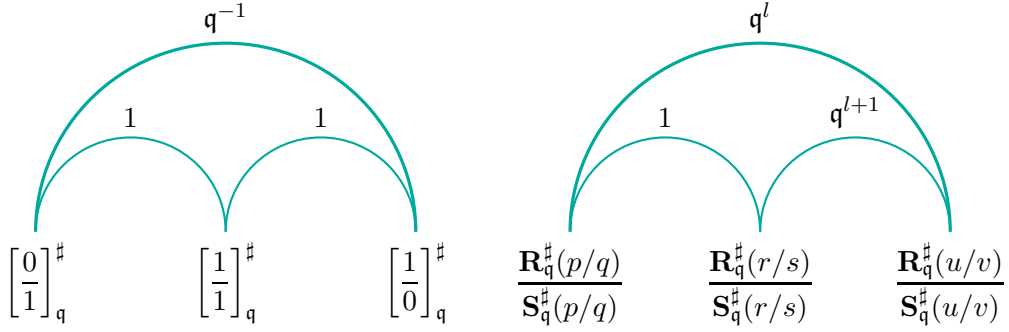

\begin{figure}[ht]\centering
\begin{tikzpicture}[scale=5]
    \draw[Emerald,very thick] (1,0) arc (0:180:.5);
    \draw  (0,0)  node[below]{$\displaystyle\left[\frac{0}{1}\right]_\q^{\flat}$};
    \draw  (1,0)  node[below]{$\displaystyle\left[\frac{1}{0}\right]_\q^{\flat}$};
    \draw[Emerald,thick] (1,0) arc (0:180:.25);
    \draw  (.5,0) node[below]{$\displaystyle\left[\frac{1}{1}\right]_\q^{\flat}$};
    \draw[Emerald,thick] (.5,0) arc (0:180:.25);
\draw (.25,.25) node[above]{$1$};
\draw (.75,.25) node[above]{$1$};
\draw (.5,.5) node[above]{$q$};
\begin{scope}[xshift=40]
\draw[Emerald,very thick] (1,0) arc (0:180:.5);
    \draw  (0,0)  node[below]{$\displaystyle\frac{\qrb{\pq}}{\qsb{\pq}}$};
    \draw  (1,0)  node[below]{$\displaystyle\frac{\qrb{\rsi}}{\qsb{\rsi}}$};
    \draw[Emerald,thick] (1,0) arc (0:180:.25);
    \draw  (.5,0) node[below]{$\displaystyle\frac{\qrb{r/s}}{\qsb{r/s}}$};
    \draw[Emerald,thick] (.5,0) arc (0:180:.25);
\draw (.25,.25) node[above]{$1$};
\draw (.75,.25) node[above]{$\qid^{l+1}$};
\draw (.5,.5) node[above]{$\qid^{l}$};
\end{scope}
\end{tikzpicture}
\caption{The left \txq-deformation via \txq-Farey sum}
    \label{lqfar}
\end{figure}
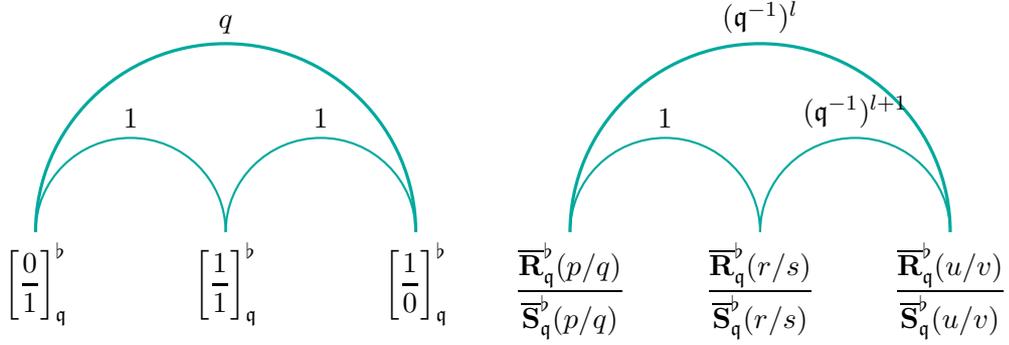
\begin{remark}\label{def:neg}
For $\frac{r}{s}\in\QQ^+$ with continued fraction expression $[a_1,\ldots, a_{2m}]$, we have
\begin{equation}
-\cfrac{r}{s}=[-a_1,\ldots,-a_{2m}],
\end{equation} and
\begin{equation}
-\cfrac{r}{s}=\gi^{-a_1}\gii^{a_2}\gi^{-a_3}\gii^{a_4}\cdots \gi^{-a_{2m-1}} \gii^{a_{2m}}(\cfrac{1}{0}).
\end{equation}
The right and left $\q$-deformations for negative rational numbers are defined as:
\begin{equation}
\begin{cases}
\left[\displaystyle-\frac{r}{s}\right]^{\sharp}_\q&=\gqi^{-a_1}\gqii^{a_2}\gqi^{-a_3}\gqii^{a_4}
        \cdots \gqi^{-a_{2m-1}}\gqii^{a_{2m}}\left(\displaystyle\frac{1}{0}\right),\\
\left[\displaystyle-\frac{r}{s}\right]_\q^{\flat}&=\gqi^{-a_1}\gqii^{a_2}\gqi^{-a_3}\gqii^{a_4}
        \cdots \gqi^{-a_{2m-1}} \gqii^{a_{2m}}\left(\displaystyle\frac{1}{1-\q}\right).
\end{cases}
\end{equation}
In fact, we can obtain $\q$-deformations of negative rationals from positive ones by the following formula:
\begin{equation}
\left[-\rat\right]_{\q}^*:=-\q^{-1}\left[\frac{r}{s}\right]_{\q^{-1}}^*,
\end{equation}
where $*\in\{\sharp, \flat\}$.
\end{remark}

\section{The topological model}
In this section, we introduce decorated (marked) surfaces as the topological model which we will use.
We first summarize the setting and results in \cite{KS,IQZ} and then
show that the \txq-intersections of certain arcs describe the left/right \txq-deformations for rational numbers.
\subsection{Decorated surfaces}
Let $\surf$ be an oriented surface with non-empty boundary $\bdy$ and we denote its interior by $\isurf=\surf\setminus(\bdy)$.
We decorate $\surf$ with a finite set $\Tri$ of points (\emph{decorations}) in $\isurf$,
denoted by $\surfo$.

Let $\surfoi=\surf\setminus(\partial\surf\cup\Tri)$. An \emph{arc} $c$ in $\surfo$ is a curve $c:[0,1]\to \surf$ such that $c(t)\in\surfoi$ for any $t\in(0,1)$. The \emph{inverse} $\overline{c}$ of an arc $c$ is defined as $\overline{c}(t)=c(1-t)$ for any $t\in[0,1]$.
\begin{definition}\label{not:CA}
A \emph{closed arc} $c$ is an arc whose endpoints $c(0)$ and $c(1)$ are in $\Tri$. It is \emph{simple} if moreover it satisfies $c(0)\neq c(1)$, without self-intersections in $\surfoi$. We denote by $\CA(\surfo)$ the set of simple closed arcs.
\end{definition}
In this paper, we always consider arcs up to taking inverse and homotopy relative to endpoints and exclude the arcs which are isotopic to a point in $\surfo$. Two arcs are in \emph{minimal position} if their intersection is minimal in the homotopy class. For three simple closed arcs, they form a \emph{contractible triangle} if they bound a disk which is contractible. For $\us, \ut\in\CA(\surfo),$ we use the notations
\begin{equation*}
\Int_{\Pi}(\us,\ut):=\left|\cap(\us,\ut)\cap \Pi\right|,
\end{equation*}
where $\Pi=\Tri$ or $\surfoi$, a subset of $\surfo$, and
\begin{equation}
\Int_{\surfo}(\us,\ut):= \frac{1}{2}\cdot\Int_{\Tri}(\us,\ut)+ \Int_{\surfoi}(\us,\ut).
\end{equation}
The \emph{mapping class group} $\MCG(\surfo)$ of a decorated surface $\surfo$ consists of the isotopy classes of the homeomorphisms of $\surf$ which fix $\partial\surf$ pointwise and fix $\triangle$ setwise. For any $\alpha\in\CA(\surfo)$, the associated \emph{braid twist} $B_\alpha\in\MCG(\surfo)$ is defined in Figure~\ref{fig:bt}. We have the formula
\begin{equation}\label{eq:bt}
B_{\Psi(\alpha)}=\Psi\circ B_\alpha\circ\Psi^{-1}
\end{equation}
for any $\alpha\in\CA(\surfo)$ and $\Psi\in\MCG(\surfo)$. We define $\BT(\surfo)$ to be the subgroup of $\MCG(\surfo)$ generated by $B_{\alpha}$ for $\alpha\in\CA(\surfo)$.
\begin{figure}[ht]\centering
	\begin{tikzpicture}[scale=.3]
	\draw[very thick,dashed](0,0)circle(6);
	\draw(-2,0)edge[red, very thick](2,0)  edge[cyan,very thick, dashed](-6,0);
	\draw(2,0)edge[cyan,very thick,dashed](6,0);
	\draw(-2,0)node[white] {$\bullet$} node[red] {$\circ$};
	\draw(2,0)node[white] {$\bullet$} node[red] {$\circ$};
	\draw(0:7.5)edge[very thick,->,>=latex](0:11);\draw(0:9)node[above]{$B_{\alpha}$};
	\end{tikzpicture}\;
	\begin{tikzpicture}[scale=.3,yscale=-1]
	\draw[very thick, dashed](0,0)circle(6)node[above,black]{$_\alpha$};
	\draw[red, very thick](-2,0)to(2,0);
	\draw[cyan,very thick, dashed](2,0).. controls +(0:2) and +(0:2) ..(0,-2.5)
	.. controls +(180:1.5) and +(0:1.5) ..(-6,0);
	\draw[cyan,very thick,dashed](-2,0).. controls +(180:2) and +(180:2) ..(0,2.5)
	.. controls +(0:1.5) and +(180:1.5) ..(6,0);
	\draw(-2,0)node[white] {$\bullet$} node[red] {$\circ$};
	\draw(2,0)node[white] {$\bullet$} node[red] {$\circ$};
	\end{tikzpicture}
	\caption{The braid twist}
	\label{fig:bt}
\end{figure}
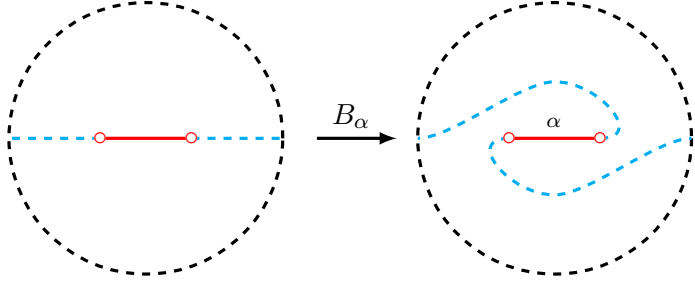
The braid twist can be illustrated by smoothing out.
\begin{construction}
For any $\sigma,\tau\in\CA(\surfo)$ with $\sigma(0)=\tau(0)=z\in\Tri$. The extension $\sigma\wedge\tau$ of $\sigma$ by $\tau$ (with respect to the common starting point) is defined in Figure~\ref{fig:ext.},
which is the operation of smoothing out the intersection moving from $\sigma$ to $\tau$ clockwisely.
	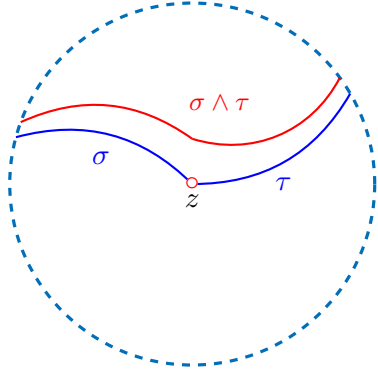
\begin{figure}[ht]\centering
		\begin{tikzpicture}[scale=1.2]
		\draw[NavyBlue,dashed,very thick](0,0)circle(2);
		\draw[thick,blue](-195:2)edge[bend left,>=stealth](0,0)
		(0,0)edge[bend right,>=stealth](30:2)
		(1,0)node{$\tau$}(-1,.1)node[above]{$\sigma$};
		\draw[thick,red](160:2)to[bend left](0,.5)(0.3,.7)
		node[above]{\small{$\sigma\wedge\tau$}};
		\draw[thick,red](0,0.5)edge[bend right=40,>=stealth](36:2);
		\draw(0,0)node[white] {$\bullet$} node[red](a){$\circ$} (0,0)node[below]{$z$};
		\end{tikzpicture}
		\caption{The extension as smoothing out}
		\label{fig:ext.}
	\end{figure}
\end{construction}
Notice that if $\Int_{\surfo}(\us,\ut)=\frac{1}{2}$, i.e. they only intersect at one decoration, then
\begin{equation}\label{eq:ext}
\sigma\wedge\tau=B_{\tau}(\sigma)=B^{-1}_{\sigma}(\tau).
\end{equation}
\begin{lemma}\label{lem:CA}
Assume that $|\Tri|\geq 3$. For any $\eta\in\CA(\surfo)$, we have
\begin{equation}
\CA(\surfo)=\BT(\surfo)\cdot\{\eta\}.
\end{equation}
\end{lemma}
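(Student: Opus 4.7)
The plan is to establish the transitivity statement $\CA(\surfo)=\BT(\surfo)\cdot\{\eta\}$ by showing that an arbitrary $\eta'\in\CA(\surfo)$ lies in the $\BT(\surfo)$-orbit of $\eta$. I would proceed by induction on the minimal intersection number $n:=\Int_{\surfo}(\eta,\eta')$, having put $\eta$ and $\eta'$ in minimal position within their homotopy classes. The equivariance identity \eqref{eq:bt} will let me freely conjugate any braid twist produced at an intermediate step back to a generator of $\BT(\surfo)$.

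For the inductive step with $n\geq 1/2$, the goal is to produce an arc $\gamma\in\CA(\surfo)$ whose braid twist sends $\eta$ to a new arc with strictly smaller intersection with $\eta'$. Two cases arise. When $\eta$ and $\eta'$ share a common endpoint $z\in\Tri$, I would apply \eqref{eq:ext}: the smoothing $\eta\wedge\eta'=B_{\eta'}(\eta)$ removes the half-unit of intersection at $z$ and, by a direct inspection of the local picture near $z$, does not create new crossings elsewhere, so $n$ drops. When $\eta$ and $\eta'$ meet transversely in $\surfoi$, I would identify an outermost sub-disk cut out by $\eta$ and $\eta'$ and take $\gamma$ to be the simple closed arc running along its boundary between the two corner decorations; the associated $B_\gamma^{\pm 1}$ undoes precisely the outermost crossing, and minimality prevents the twist from introducing fresh intersections.

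The base case $n=0$ splits into the trivial situation $\eta=\eta'$ (take $\Psi=\id$) and the case of two disjoint but distinct arcs, where the hypothesis $|\Tri|\geq 3$ becomes essential. Here I would produce an auxiliary arc $\zeta\in\CA(\surfo)$ that meets both $\eta$ and $\eta'$ non-trivially, for example by joining a decoration not used by $\eta$ to an endpoint of $\eta'$ through a third decoration made available by $|\Tri|\geq 3$. Then $B_\zeta^{\pm 1}(\eta)$ is no longer disjoint from $\eta'$, and the positive-intersection case applies; composing with the resulting twists yields the required $\Psi\in\BT(\surfo)$.

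The main obstacle is the interior-intersection subcase: one must verify that the candidate $\gamma$ is a genuine \emph{simple} closed arc (with distinct endpoints in $\Tri$ and no self-intersections in $\surfoi$) and that the new pair $(B_\gamma^{\pm 1}(\eta),\eta')$ remains in minimal position with $n$ strictly decreased. The crucial input is the bigon criterion for minimal position, which rules out unexpected excess crossings after the twist. The base-case step is also delicate, since for $|\Tri|=2$ disjoint arcs can lie in different $\BT$-orbits, so the argument must genuinely exploit the presence of a third decoration in order to bridge the two disjoint arcs.
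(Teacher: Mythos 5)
Your overall scheme — induction on $\Int_{\surfo}(\eta,\eta')$, the shared-endpoint case handled by the smoothing identity \eqref{eq:ext}, and the disjoint case bridged by an auxiliary arc whose existence uses $|\Tri|\geq 3$ — matches the paper's. The genuine divergence, and the genuine gap, is in the inductive step for interior crossings. You propose to find an ``outermost sub-disk cut out by $\eta$ and $\eta'$'' with ``two corner decorations'' and to twist $\eta$ along its third side to remove one crossing. You have not shown such a region exists, and in general it need not: when $\eta$ and $\eta'$ share no endpoints and cross several times (e.g.\ $\eta'$ a high power of $B_\eta$ applied to some arc, so that $\eta'$ spirals around the endpoints of $\eta$), the complementary regions having decorations as corners are typically bounded by more than one segment of each arc, and the first crossing along $\eta$ from one of its endpoints need not coincide with the first crossing along $\eta'$ from one of its endpoints. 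Even granting a candidate $\gamma$, the claim that $B_\gamma^{\pm 1}$ strictly decreases the intersection number without creating new crossings is exactly the hard content, and you defer it to an unspecified appeal to the bigon criterion. As written, the inductive step is a plan for a proof rather than a proof.

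The paper's inductive step avoids this entirely by operating on the target arc $\xi$ rather than on $\eta$: choose a decoration $z$ not an endpoint of $\xi$ (this is the second place $|\Tri|\geq 3$ is used), pick a point $p$ on $\xi$, cut $\xi$ at $p$ and join both halves to $z$, producing $\xi_1,\xi_2\in\CA(\surfo)$ with $\xi=\xi_1\wedge\xi_2=B_{\xi_2}(\xi_1)$ directly from \eqref{eq:ext} and $\Int_{\surfo}(\eta,\xi)=\Int_{\surfo}(\eta,\xi_1)+\Int_{\surfo}(\eta,\xi_2)$. Induction applied to $\xi_1$ then gives $\xi=(B_{\xi_2}\circ b)(\eta)$ with no need to analyze how any braid twist transforms an intersection pattern. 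If you want to salvage your route, you must either prove the existence of the reducing arc $\gamma$ and the strict decrease under $B_\gamma^{\pm1}$ (a nontrivial surface-topology argument), or switch to a decomposition of $\xi$ of the above kind.
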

\begin{proof}
Let $\xi$ be a simple closed arc in $\CA(\surfo)$. We notice that the intersection $\Int_{\surfo}(\ue,\xi)\in\cfrac{1}{2}\cdot\ZZ^+$. When $\Int_{\surfo}(\ue,\xi)>0$, we use induction on it. For the starting case when $\Int_{\surfo}(\ue,\xi)=\frac{1}{2}$, we take $\ua=\xi\wedge\ue\in\CA(\surfo)$, and then $\xi=B_{\ua}(\ue)$ by \eqref{eq:ext}.

Now suppose the assertion holds when $\Int_{\surfo}(\ue,\xi)<k$ and consider the case when $\Int_{\surfo}(\ue,\xi)=k\in\cfrac{1}{2}\cdot\ZZ^+$. There exists some decoration $z$ which is not the endpoint of $\xi$ (if the endpoints of $\ue$ and $\xi$ are not coincide, we take $z$ to be an endpoint of $\ue$) and we connect $z$ to $\xi$ by $l$ such that it intersects $\xi$ at $p$ (see Figure \ref{fig:xi12}). We cut $\xi$ at $p$ and connect two parts with $l$ respectively. Then we obtain $\xi_1,\xi_2\in\CA(\surfo)$ such that $\xi=\xi_1\wedge\xi_2$ and
\begin{equation}
\Int_{\surfo}(\ue,\xi)=\Int_{\surfo}(\ue,\xi_1)+\Int_{\surfo}(\ue,\xi_2).
\end{equation}
By assumption, there exists $b\in\BT(\surfo)$ such that $\xi_1=b(\ue)$. Thus $\xi=B_{\xi_2}(\xi_1)= (B_{\xi_2}\cdot b)(\ue)$.
\begin{figure}[ht]\centering
\begin{tikzpicture}[yscale=-1]
\draw[red,thick](0,0).. controls +(90:1) and +(-90:1) ..(3,2);
\draw[red,thick](6,0).. controls +(180:1) and +(0:1) ..(3,2);

\draw[red,thick](0,0).. controls +(90:.7) and +(210:1) ..(3,1)
    node[black]{\tiny{$\bullet$}} node[black,above]{$p$}
    .. controls +(30:1) and +(180:1) ..(6,0);
\draw[bend right=15,thick](3,1)to(3,2) (3.1,1.5)node[right]{$l$};

\draw(0,0)node[white] {$\bullet$} node[red]{$\circ$}
     (3,2)node[white] {$\bullet$} node[red]{$\circ$}node[below]{$z$}
     (6,0)node[white] {$\bullet$} node[red]{$\circ$}
     (4.8,1)node {$\xi_2$}(1.5,1.3)node {$\xi_1$}(3.7,0.9)node[above] {$\xi$};
\end{tikzpicture}
\caption{Decomposing $\xi$}
\label{fig:xi12}
\end{figure}
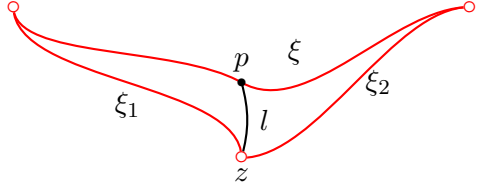

Finally, we consider the case when $\Int_{\surfo}(\ue,\xi)=0$. We can choose a simple closed arc $\ua$ such that $\Int_{\surfo}(\ue,\ua)=\Int_{\surfo}(\xi,\ua)=\frac{1}{2}$. By the starting case, we have $\ua\in\BT(\surfo)\cdot\ue$ and $\xi\in\BT(\surfo)\cdot\ua\subset\BT(\surfo)\cdot\ue$.
\end{proof}
\def\di{z_{\infi}}
\def\dii{z_*}
\def\diii{z_{\zero}}
\paragraph{\textbf{The branched double cover}}
Let $\bc$ be a branched double cover of $\surfo$ branching at decorations. When there exists extra structure (e.g. a quadratic differential), $\bc$ can be constructed as the spectral cover (cf. \cite{KS}). We denote the covering map by $\cpi:\bc\to\surfo$.

We consider the special case when $\surfo=\disk$ is a disk and $\Tri=\{\deci,\decs,\decz\}$. We fix two initial simple closed arcs $\ue_{\zero}$ and $\ue_{\infi}$ such that $\ue_{\zero}\cap\ue_{\infi}=\{\decs\}$, see Figure \ref{fig:disk}. Notice that there is an anti-clockwise angle from $\ue_{\infi}$ to $\ue_{\zero}$. The branched double cover $\bc$ is a torus with one boundary $\partial_\Sigma$. For simplicity, we draw $\partial_\Sigma$ as a puncture in the figures.
\begin{figure}[ht]\centering
\begin{tikzpicture}[scale=.4,yscale=.8]
	\draw[very thick](0,0)circle(6);
\draw[blue!50,-<-=.6,>=stealth,thick](180:.8)arc(180:0:.8)(0:.8);
\draw(-3,0)edge[red, very thick](0,0);
\draw(0,0)edge[red, very thick](3,0);
\draw(-3,0)node[below,black]{$\decz$} (0,0)node[below,black]{$\decs$} (3,0)node[below,black]{$\deci$};
\draw(-1.5,0)node[above,red]{$\ue_{\zero}$} (1.5,0)node[above,red]{$\ue_{\infi}$};
	\draw(-3,0)node[white] {$\bullet$} node[red] {$\circ$};
	\draw(0,0)node[white] {$\bullet$} node[red] {$\circ$};
	\draw(3,0)node[white] {$\bullet$} node[red] {$\circ$};
	\end{tikzpicture}
\caption{$\disk$ is a disk with three decorations}\label{fig:disk}
\end{figure}
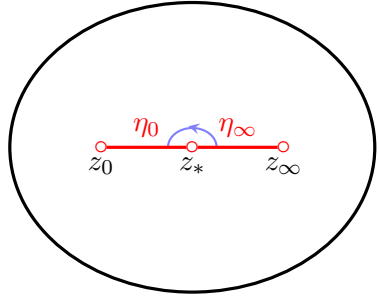
We take the $\ZZ^2$-covering $\ubc$ of $\bc$, where the white area is a fundamental domain (see Figure \ref{fig:univfundom}), and we denote the covering map by $\widetilde{\pi}:\ubc\to\bc$. When forgetting the punctures and decorations of $\ubc$, it is the universal cover of torus. Hence we embed $\ubc$ into $\RR^2$ such that all decorations and punctures are integer points, where its fundamental domain is a unit square. For each line (it is not allowed to pass through the punctures) in $\ubc$ with rational slope $\frac{r}{s}\in\uQ$, it becomes a simple closed curve $C_{\frac{r}{s}}$ in $\bc$ under the map $\widetilde\pi$.
\begin{figure}[ht]\centering
\begin{tikzpicture}[scale=.5]
\draw[white,fill=gray!20](0,0)rectangle(12,12);
\draw[white,fill=white](0,0)rectangle(6,6);
\draw[very thick](0,0)--(12,0)--(12,12)--(0,12)--(0,0) (0,6)--(12,6) (6,12)--(6,0);
\draw (3,0)node[below]{$\deci$} (0,3)node[left]{$\decz$}(3,12)node[above]{$\deci$}(12,3)node[right]{$\decz$}(3.5,3)node[below]{$\dii$}(9,0)node[below]{$\deci$} (0,9)node[left]{$\decz$}(9,12)node[above]{$\deci$}(12,9)node[right]{$\decz$};
\draw[red, very thick] (3,0)--(3,12) (0,3)--(12,3) (9,0)--(9,12)(0,9)--(12,9);
\draw[red, very thick](3,0)--(6,3)--(9,9) (3,0)--(9,9);
\draw (0,0)\nn  (6,0)\nn (0,6)\nn (6,6)\nn  (12,0)\nn  (6,12)\nn (12,6)\nn (12,12)\nn  (0,12)\nn ;
\draw[white] (3,0)\nn  (6,3)\nn (0,3)\nn (3,6)\nn  (3,3)\nn  (9,0)\nn  (9,3)\nn (12,3)\nn (9,6)\nn  (12,9)\nn  (0,9)\nn  (3,9)\nn (6,9)\nn (9,9)\nn  (3,12)\nn  (9,12)\nn ;
\draw[red](3,0)\ww (0,3)\ww(3,6)\ww(6,3)\ww(3,3)\ww (9,0)\ww(9,3)\ww(12,3)\ww(9,6)\ww(12,9)\ww(0,9)\ww(3,9)\ww(6,9)\ww(9,9)\ww(3,12)\ww(9,12)\ww;
	\end{tikzpicture}
\caption{The $\ZZ^2$-covering of $\bc$ and its fundamental domain}
\label{fig:univfundom}
\end{figure}

\begin{lemma}[{\cite[\S 10]{KQ}}]\label{cor:biCA}
The set $\CA(\disk)$ of simple closed arcs in $\disk$ can be parameterized by rational numbers, i.e there is a bijection
\begin{equation}
\begin{tikzpicture}[xscale=.6,yscale=.6]
\draw(180:3)node(o){$\CA(\disk)$}(-3,2.2)node(b){\small{$\Br_3/\Z(\Br_3)$}}
(0,2.5)node{$\iota$}(0,.5)node{$\cong$};
\draw(0:3)node(a){$\uQ$}(3,2.2)node(s){\small{$\PSL$}};
\draw[->,>=stealth](o)to(a);\draw[->,>=stealth](b)to(s);
\draw[->,>=stealth](-3.2,.6).. controls +(135:2) and +(45:2) ..(-3+.2,.6);
\draw[->,>=stealth](3-.2,.6).. controls +(135:2) and +(45:2) ..(3+.2,.6);
\end{tikzpicture}
\end{equation}
sending $\ue_{\frac{r}{s}}$ to $\frac{r}{s}$.
\end{lemma}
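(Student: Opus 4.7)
The plan is to pass through the branched double cover $\cpi:\bc\to\disk$, which presents $\bc$ as a once-boundary torus, and then invoke the classical parametrization of simple closed curves on the torus by their slope.

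First I would verify that $\cpi$ induces a bijection between $\CA(\disk)$ and the set of isotopy classes of essential simple closed curves on $\bc$. Since both endpoints of any $c\in\CA(\disk)$ lie in the branch locus $\Tri$, the preimage $\cpi^{-1}(c)$ is a single connected simple closed curve on $\bc$; conversely every essential simple closed curve on $\bc$ is isotopic to an involution-invariant representative, which then descends to a simple closed arc in $\disk$. Next, essential unoriented simple closed curves on a once-punctured torus are classified by their slope in $H_1=\ZZ^2$: using the $\ZZ^2$-cover $\ubc$ shown in Figure \ref{fig:univfundom}, the straight line of rational slope $\frac{r}{s}$ (chosen to avoid the punctures) descends to the curve $C_{\frac{r}{s}}$ on $\bc$, yielding a bijection $\SCC(\bc)\cong\uQ$. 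Reading off the figure, $\ue_{\zero}$ lifts to horizontal lines (slope $0/1$) and $\ue_{\infi}$ lifts to vertical lines (slope $1/0$), consistent with the labelling in the lemma.

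It remains to match the $\PSL$-action on both sides of the diagram. By Lemma \ref{lem:CA}, $\BT(\disk)$ acts transitively on $\CA(\disk)$, and the two elementary twists $B_{\ue_{\zero}}$ and $B_{\ue_{\infi}}$ already generate it. Under $\cpi$ these braid twists lift to Dehn twists along the horizontal and vertical loops on $\bc$, which generate $\MCG(\bc)\cong\mathrm{SL}_2(\ZZ)$ with its tautological action on $H_1=\ZZ^2$; on slopes this reads as the fractional-linear action \eqref{eq:fl}. Quotienting by the hyperelliptic involution (equivalently, by the center $\Z(\Br_3)$) identifies $\BT(\disk)/\Z(\Br_3)\cong\PSL$ with generators $\gi,\gii$. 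Combined with \eqref{eq:expre}, for every $\frac{r}{s}\in\upQ$ the braid word $\gi^{a_1}\gii^{-a_2}\cdots\gii^{-a_{2m}}$ applied to $\ue_{\infi}$ produces a simple closed arc we label $\ue_{\frac{r}{s}}$, and the negative case follows by the symmetry of Remark \ref{def:neg}.

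The main obstacle is well-definedness of this last step: distinct braid-word expressions for the same rational must yield isotopic arcs. This reduces to the classical identification $\MCG(\bc)\cong\mathrm{SL}_2(\ZZ)$ with its action on $\ZZ^2$ by matrix multiplication, together with the fact that the hyperelliptic-equivariant lift through $\cpi$ sends $\BT(\disk)$ exactly onto this mapping class group modulo the boundary Dehn twist. Both ingredients are carried out in \cite{KQ}, so we mainly need to assemble them along the commutative diagram in the statement.
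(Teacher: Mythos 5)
Your proposal follows essentially the same route as the paper: lift simple closed arcs through the branched double cover to simple closed curves on the once-boundary torus $\bc$, parameterize those by slopes in $H_1(\bc)\cong\ZZ^2$, and match $\BT(\disk)\cong\Br_3$ with the Dehn twist group $\DT(\bc)$ so that the quotient by the center gives $\PSL$ acting by fractional linear transformations. Your version is somewhat more detailed on well-definedness and on the correspondence between arcs and involution-invariant curves, but the argument is the same in substance.
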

\begin{proof}
We lift simple closed arcs in $\CA(\surfo)$ to simple closed curves in $\bc$, which can be parametrized by rational numbers in $\uQ$. That is, for any $\frac{r}{s}\in\uQ$, there exists an $\ue_{\frac{r}{s}}\in\CA(\surfo)$ which corresponds to a simple closed curve $C_{\frac{r}{s}}$ in $\bc$. Notice that the homology group $H_1(\bc)=\ZZ[C_{\infi}]\oplus\ZZ[C_0]$. Thus the homology class $[C_{\frac{r}{s}}]$ corresponds to $(r,s)$ in $H_1(\bc)\cong\ZZ^2$. Notice that the braid twist $\BT(\disk)\cong\Br_3$ lifts to Dehn twist $\DT(\bc)\subset\MCG(\bc)$, which is generated by $C_0$ and $C_\infi$. By the identification $\DT(\bc)/\Z(\DT(\bc))\cong\PSL$, the lemma holds.
\end{proof}
\begin{remark}
Here is a consequence of the lemma above. For a rational number $\frac{r}{s}\in\QQ^+$ with expression \eqref{eq:expre}, the corresponding arc in $\CA(\disk)$ is
\begin{equation}
\ue_{\frac{r}{s}}=B_{\ue_{\infi}}^{a_1}B_{\ue_{\zero}}^{-a_2}B_{\ue_{\infi}}^{a_3}B_{\ue_{\zero}}^{-a_4}\cdots B_{\ue_{\infi}}^{a_{2m-1}} B_{\ue_{\zero}}^{-a_{2m}}(\ue_{\infi}).
\end{equation}
Moreover,
\begin{equation}
\ue_{-\frac{r}{s}}=B_{\ue_{\infi}}^{-a_1}B_{\ue_{\zero}}^{a_2}B_{\ue_{\infi}}^{-a_3}B_{\ue_{\zero}}^{a_4}\cdots B_{\ue_{\infi}}^{-a_{2m-1}} B_{\ue_{\zero}}^{a_{2m}}(\ue_{\infi}).
\end{equation}
\end{remark}
\subsection{Bigraded arcs and \txq-intersections}\label{sec:dgc}
Let $\surfo$ be a decorated surface. In this section, we define the bigraded arcs and their \txq-intersections. Let $\PTSt=\PTSO$ be the real projectivization of the tangent bundle of $\surf\setminus\Tri$. We want to introduce a particular covering of $\PTSt$ with covering group $\ZZ\oplus\ZZ\XX\cong\ZZ^2$. A \emph{grading} $\Lambda:\surfo\to\PTSt$ on $\surfo$ is determined by a class in $\ho{1}(\PTSt,\ZZ\oplus\ZZ\XX)$, with value $1$ on each anti-clockwise loop $\{p\}\times\RR\mathbb{P}^1$ on $\PTSo{p}$ for $p\notin\Tri$ and value $-2+\XX$ on each anti-clockwise loop $l_z\times\{x\}$ on $\surfo$ around any $z\in\Tri,x\in\RR\mathbb{P}^1$. For any simple loop $\alpha$ on $\surfo$, we denote $\Li(\alpha)$ the $\ZZ$ part of $\Lambda(\alpha)$ and denote $\Lambda_2(\alpha)$ the $\ZZ\XX$ part of $\Lambda(\alpha)$. In fact, the first grading is a line field $\lambda$ of $\surfo$ which is determined by a class in $\ho{1}(\PTSt,\ZZ)$. Define $\wPTSt$ to be the $\ZZ\oplus\ZZ\XX$ covering of $\PTSt$ classified by the grading $\Lambda$, and denote the $\ZZ\oplus\ZZ\XX$ action on $\wPTSt$ by $\chi$.
\begin{definition}\label{def:bgDMS}
A \emph{graded decorated surface} $\DMS$ consists of a decorated surface $\surfo$ and a grading $\Lambda$ on $\surfo$.
\end{definition}
Let $\DMS$ be a graded decorated surface and $c:[0,1]\to \surf$ be an arc in $\surf$. There is a canonical section $s_c: c\setminus\Tri\to\PTSt$ given by $s_c(z)=T_zc$. A \emph{bigrading} on $c$ is given by a lift $\lc$ of $s_c$ to $\wPTSt$. The pair $(c,\lc)$ is called a \emph{bigraded arc}, and we usually denote it by $\lc$. Note that there are $\ZZ^2$ lifts of $c$, which are related by the $\ZZ^2$-action defined by $\chi$. One is the shift grading such that $\lc[m](t)=\chi(m,0)\lc(t)$ and the other is the $\XX$-grading such that $\lc\{m\}(t)=\chi(0,m)\lc(t)$ for any $m\in\ZZ$. For any $\ue\in\CA(\surfo)$, we call any of its lifts $\lwe$ in $\wPTSt$ a \emph{bigraded simple closed arc}. Denote by $\wXCA(\surfo)$ the set of bigraded simple closed arcs.

For any bigraded arcs $\lws$ and $\lwt$ which are in minimal position with respect to each other, let $p=\us(t_1)=\ut(t_2)\in\isurf$ be the point where $\us$ and $\ut$ intersect transversally.  Fix a small circle $a\subset\surf\setminus\Tri$ around $p$. Let $\alpha:[0,1]\to a$ be an embedded arc which moves anti-clockwise around $p$, such that $\alpha$ intersects $\us$ and $\ut$ at $\alpha(0)$ and $\alpha(1)$, respectively (cf. Figure~\ref{fig:iid}). If $p\in\Tri$, then $\alpha$ is unique up to a change of parametrization (see the left one of Figure~\ref{fig:iid}); otherwise there are two possibilities, which are distinguished by their endpoints (see the right one of Figure~\ref{fig:iid}). Take a smooth path $\rho:[0,1]\to\PTSt$ with $\rho(t)\in\PP T_{\alpha(t)}\surfo$ for all $t$, going from $\rho(0)=T_{\alpha(0)}\us$ to $\rho(1)=T_{\alpha(1)}\ut$, such that $\rho(t)\neq T_{\alpha(t)}l$ for all $t$. Lift $\rho$ to a path $\widehat{\rho}:[0,1]\to\wPTSt$ with $\widehat{\rho}(0)=\lws(\alpha(0))$. Then there exist some integers $\ii,\sii\in\ZZ$ such that $\lwt(\alpha(1))=\chi(\ii+\sii\XX)\widehat{\rho}(1)$.
\begin{figure}[htpb]
\begin{tikzpicture}[scale=.6]
\draw[blue!50,thick,->-=.5,>=stealth](45:1)arc(45:135:1) (135:1);
\draw[red] (90:1)node[above]{$\ua$};
\draw[red,thick](0,0)to node[below]{$\quad\us$}(45:4.5);
\draw[red,thick](0,0)to node[below]{$\ut\quad$}(135:4.5);
\draw[red](0,0)node[white]{$\bullet$} \ww;
\draw[red](0,0)node[below]{$p$};
\begin{scope}[shift={(8,0)}]
	\draw[blue!50,thick,->-=.5,>=stealth](45:1)arc(45:135:1) (135:1);
	\draw[blue!50,thick,->-=.5,>=stealth](225:1)arc(225:315:1) (315:1);
	\draw[red] (90:1)node[above]{$\ua$};
		\draw[red] (270:1)node[below]{$\ua$};
	\draw[red,thick](0,0)to node[below]{$\quad\us$}(45:4);
	\draw[red,thick](0,0)to (180+45:4);
	\draw[red,thick](0,0)to node[below]{$\ut\quad$}(135:4);
	\draw[red,thick](0,0)to (315:4);
 \draw[red](0,0)node[red]{$\bullet$};
	\draw[red](0,0)node[below]{$p$};
	\end{scope}
	\end{tikzpicture}
	\caption{Intersection at $p$ when $p$ is a decoration or not}\label{fig:iid}
\end{figure}
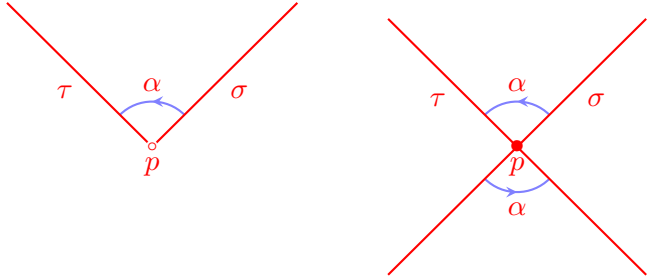
\begin{definition}[{\cite{KS}}]\label{def:bi-index}
For any bigraded arcs $\lws$ and $\lwt$ in $\DMS$, we call an intersection $p$ of $\lws$ and $\lwt$ with \emph{bi-index}
\begin{equation}\label{eq:Z2-int}
\ind_p^{\ZZ^2}(\lws,\lwt)=\ind_p(\lws,\lwt)+\ind_p^{\XX}(\lws,\lwt)\XX,
\end{equation}
where $\ind_p(\lws,\lwt):=\ii$ and $\sind_p(\lws,\lwt):=\sii$ defined as above.
\end{definition}
We have the following equations among bi-indices, which will be used later.
\begin{lemma}[{\cite[Lemma 2.6]{IQZ}}]\label{lem:biindsum}
Let $\lws,\lwt$ be bigraded arcs in $\DMS$ with an intersection $p\in\isurf$. If $p\notin\Tri$, we have
\begin{equation}\label{eq:hkkg}
	\dind_p(\lws,\lwt)+\dind_p(\lwt,\lws)=1.
\end{equation}
If $p\in\Tri$, we have
\begin{equation}\label{eq:hkkg1}
	\dind_p(\lws,\lwt)+\dind_p(\lwt,\lws)=\XX.
\end{equation}
\end{lemma}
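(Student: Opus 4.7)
The plan is to read both bi-indices as deck-transformation discrepancies and concatenate them into a single closed loop in $\PTSt$, so that the sum $\dind_p(\lws,\lwt)+\dind_p(\lwt,\lws)$ becomes the value of the grading class $\Lambda\in\ho{1}(\PTSt,\ZZ\oplus\ZZ\XX)$ on that loop. More precisely, pick a small anti-clockwise arc $\alpha$ on a circle around $p$ from the $\us$-side to the $\ut$-side (realising $\dind_p(\lws,\lwt)$), and the companion arc $\alpha'$ starting where $\alpha$ ends and running anti-clockwise from the $\ut$-side back to the starting $\us$-point (realising $\dind_p(\lwt,\lws)$). The paths $\rho,\rho'$ in $\PTSt$ chosen as in the definition concatenate to a genuine loop $\gamma=\rho\cup\rho'$ in $\PTSt$ with $\gamma(0)=\gamma(1)=T_{\alpha(0)}\us$. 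By chasing lifts the way the bi-index is set up, the monodromy of $\gamma$ in the $\ZZ\oplus\ZZ\XX$-cover $\wPTSt$ is exactly $\chi\bigl(\dind_p(\lws,\lwt)+\dind_p(\lwt,\lws)\bigr)$, i.e.\ equal to $\chi(\Lambda(\gamma))$.

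Having reduced the statement to the homology computation $\Lambda(\gamma)$, I split into the two cases. When $p\notin\Tri$, the projection of $\gamma$ to $\surfo\setminus\Tri$ bounds a disk neighborhood of $p$ and is null-homologous, so $\gamma$ is homologous to a (signed) multiple of the fiber generator $\{p\}\times\RR\PP^1$. A direct inspection of the smooth path $\rho\cup\rho'$ shows the tangent direction traces out $T\us\to T\ut\to T\us$ along the anti-clockwise order on $\RR\PP^1$, which winds exactly once around the fiber. Since $\Lambda$ assigns value $1$ to that fiber loop, one obtains $\Lambda(\gamma)=1$, giving the first formula.

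When $p\in\Tri$, the base projection of $\gamma$ is the full anti-clockwise loop $l_p$ around the decoration $p$, and the tangent directions of $\us$ and $\ut$ near $p$ point radially from $p$. Consequently, the fiber part of $\rho\cup\rho'$ rotates through a full $2\pi$ (i.e.\ twice through $\RR\PP^1$) while the base goes once around $p$. The class of $\gamma$ is therefore $[l_p\times\{x\}]+2\cdot[\{p\}\times\RR\PP^1]$, and the definition of $\Lambda$ gives $\Lambda(\gamma)=(-2+\XX)+2\cdot 1=\XX$, which yields the second formula.

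The main subtlety, and the main obstacle, is bookkeeping of signs, orientations, and the choice of sector around $p$ (especially for a transverse interior intersection, where four sectors are available). One has to verify that both admissible choices for $\alpha$ in the transverse case produce the same bi-index sum, and that the anti-clockwise conventions used in the definitions of $\Lambda$, of $\chi$, and of $\rho$ are compatible so that the deck transformation along $\gamma$ is $\chi(\Lambda(\gamma))$ with the correct sign. Once these conventions are fixed, the two homology computations above are routine and finish the proof.
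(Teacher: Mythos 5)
The paper does not prove this lemma at all: it is imported verbatim from \cite[Lemma 2.6]{IQZ}. So your argument can only be judged on its own terms, and in outline it is the standard (and correct) one: interpret the two bi-indices as deck transformations, assemble a closed loop in $\PTSt$, and evaluate the grading class $\Lambda$ on its homology class. Your two homology computations are right: for $p\notin\Tri$ the base is null-homotopic and the fibre direction sweeps once around $\RR\mathbb{P}^1$, giving $1$; for $p\in\Tri$ the tangent directions are radial, so the loop is $[l_p\times\{x\}]+2[\{p\}\times\RR\mathbb{P}^1]$ and $\Lambda$ gives $(-2+\XX)+2=\XX$.

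There is, however, one step that is false as written. When $p\in\isurf\setminus\Tri$ the concatenation $\rho\cup\rho'$ is \emph{not} a closed loop. The arc $\alpha$ used to define $\dind_p(\lws,\lwt)$ is embedded and meets $\us,\ut$ only at its endpoints, so it sweeps exactly one of the four quadrants, from a point $x_0\in\us$ to the adjacent point $x_1\in\ut$; the admissible $\alpha'$ for $\dind_p(\lwt,\lws)$ starting at $x_1$ then ends at the \emph{antipodal} point $-x_0$ of $\us$ on the small circle, not at $x_0$. (The three-quadrant arc from $x_1$ back to $x_0$ is not admissible, since it crosses $\us$ and $\ut$ in its interior.) Hence $\rho'$ terminates in the fibre over $-x_0$ and ``the monodromy of $\gamma$'' is not yet defined. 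The repair is easy but must be said: close up with the tautological section $s_\us$ over the diameter of the circle through $p$ from $-x_0$ back to $x_0$; since $\lws$ is by definition a continuous lift of $s_\us$, this closing segment contributes nothing to the deck transformation, and it lies over a path inside the small disk, so it does not disturb the homology computation. Only the decoration case genuinely yields a closed loop $\rho\cup\rho'$ as you claim. With this fix, and with the check (which you correctly flag but do not carry out) that both admissible quadrant choices give the same sum, your proof is complete.
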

\begin{lemma}[{\cite[Lemma 2.7]{IQZ}}]\label{lem:bi3ind}
Let $\lws,\lwt,\lwa$ be bigraded arcs in $\DMS$. If they are in the case in the left one of Figure~\ref{fig:3int1}, we have
\begin{equation}\label{eq:3int}
\dind_p(\lws,\lwa)=\dind_{p}(\lws,\lwg)+\dind_{p}(\lwg,\lwa).
\end{equation}
If they are in the case in the left one of Figure~\ref{fig:iid}, we have
\begin{align}\label{eq:ksg}
\begin{split}
\dind_p(\lws,\lwt)&=\dind_{\alpha(0)}(\lws,\lwa)-\dind_{\alpha(1)}(\lwt,\lwa)\\
		&=\dind_{\alpha(1)}(\lwa,\lwt)-\dind_{\alpha(0)}(\lwa,\lws).
\end{split}
\end{align}
\begin{figure}[h]\centering
	\begin{tikzpicture}[scale=.6]
	\foreach \j in {0,1,2}{
		\draw[red,thick] (180+120*\j:3)to(120*\j:3);}
	\draw[red,](120:3)to(-60:3)node[right]{$\lwt$} (0,0)node{$\bullet$} (3,0)node[right]{$\lwa$}(-120:3)node[left]{$\lws$};
	\draw[blue!50,thick,-<-=.7,>=stealth]	(0:1.2)to[bend left=60](-120:1.2);
	\draw[blue!50,thick,-<-=.7,>=stealth]	(0:.9)to[bend left=15](-60:.9);
	\draw[blue!50,thick,-<-=.7,>=stealth]	(-60:.7)to[bend left=15](-120:.7);
	\draw[red] (0,0)node[above]{$p$};
\begin{scope}[shift={(8,0)}]
\foreach \j in {0,1,2}{\draw[red, thick] (240+60*\j:0)to(240+60*\j:3);}
\draw[white] (0,0)node{$\bullet$};
\draw[red,](300:3)node[right]{$\lwt$} (0,0)\ww (3,0)node[right]{$\lwa$}(-120:3)node[left]{$\lws$};
\draw[blue!50,thick,-<-=.7,>=stealth]	(0:1.1)arc(0:-120:1.1)(-120:1.1);
\draw[blue!50,thick,-<-=.7,>=stealth](0:.85)arc(0:-60:.85)(-60:.85);
\draw[blue!50,thick,-<-=.7,>=stealth](-60:.7)arc(-60:-120:.7)(-120:.7);
\draw[red] (0,0)node[above]{$\d$} (290:1.1)node[below]{$\lwth$};
\end{scope}
	\end{tikzpicture}
	\caption{Bigraded arcs intersect at the same point (or decoration) in anti-clockwise}\label{fig:3int1}
\end{figure}
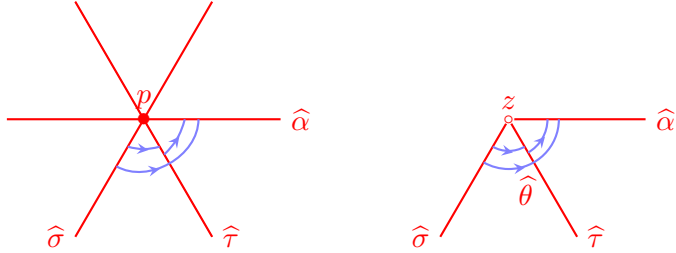
\end{lemma}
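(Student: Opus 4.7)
The plan is to unpack \Cref{def:bi-index} and work directly with the $\ZZ\oplus\ZZ\XX$-covering $\wPTSt\to\PTSt$. Both identities say that two lifts of related paths in $\PTSt$ differ by prescribed covering shifts, and the arguments reduce to checking that these shifts compose in the expected way.

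For \eqref{eq:3int}, I will choose a small disk around $p$ and view the tangent lines $T_p\lws$, $T_p\lwg$, $T_p\lwa$ as points in the fiber $\PP T_p\surfo\cong\RR\PP^1$. The anticlockwise configuration on the left of \Cref{fig:3int1} implies that the short anticlockwise path $\rho$ in $\PP T_p\surfo$ from $T_p\lws$ to $T_p\lwa$ used to define $\dind_p(\lws,\lwa)$ passes through $T_p\lwg$ without completing a full loop. I can therefore factor $\rho=\rho_1\ast\rho_2$, where $\rho_1$ ends at $T_p\lwg$ and $\rho_2$ starts there, both traversed anticlockwise. Lifting to $\wPTSt$ starting from $\lws(p)$, the endpoint of the lift of $\rho_1$ differs from $\lwg(p)$ by the covering shift $\chi(\dind_p(\lws,\lwg))$, and then extending by the lift of $\rho_2$ the final endpoint differs from $\lwa(p)$ by the further shift $\chi(\dind_p(\lwg,\lwa))$. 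Additivity of the $\chi$-action in $\ZZ\oplus\ZZ\XX$ yields \eqref{eq:3int}.

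For \eqref{eq:ksg}, the point $p=\us(t_1)=\ut(t_2)\in\isurf\setminus\Tri$ is a transverse crossing, and $\alpha$ is the small anticlockwise arc around $p$ joining $\us$ to $\ut$ with $\alpha(0)\in\us$ and $\alpha(1)\in\ut$. I will read $\lwa$ as a bigraded lift of $\alpha$ (viewed as a path in $\wPTSt$), so that the bi-indices $\dind_{\alpha(0)}(\lws,\lwa)$ and $\dind_{\alpha(1)}(\lwt,\lwa)$ are well-defined. The strategy is to produce a lift of a path from $\lws(\alpha(0))$ to $\lwt(\alpha(1))$ in two equivalent ways: directly via the defining rotation at $p$ (travelling along $\lws$ to $p$, rotating by $\dind_p(\lws,\lwt)$, then travelling back along $\lwt$), and via the detour that rotates from $T\lws$ to $T\lwa$ at $\alpha(0)$, transports along $\lwa$ from $\alpha(0)$ to $\alpha(1)$ carrying the chosen bigrading (contributing no shift), and finally rotates from $T\lwa$ to $T\lwt$ at $\alpha(1)$. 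Matching the two lifts in $\wPTSt$ gives the first equality in \eqref{eq:ksg}, the minus sign arising because the last rotation reverses orientation relative to $\dind_{\alpha(1)}(\lwt,\lwa)$. The second equality then follows by applying \eqref{eq:hkkg} from \Cref{lem:biindsum} to both $\dind_{\alpha(0)}(\lws,\lwa)$ and $\dind_{\alpha(1)}(\lwt,\lwa)$, since all three intersections lie in $\isurf\setminus\Tri$.

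The main obstacle will be careful bookkeeping of signs and of the $\XX$-component. One must verify: (i) traversing $\alpha$ along its chosen bigraded lift $\lwa$ contributes no extra shift in $\wPTSt$; (ii) the angular reversal at $\alpha(1)$ accurately produces the minus sign in front of $\dind_{\alpha(1)}(\lwt,\lwa)$; (iii) since $p\notin\Tri$, the auxiliary loop traced out in $\PTSt$ is null-homologous modulo small loops $\{q\}\times\RR\PP^1$ and does not enclose any decoration, so no $-2+\XX$ contribution from the defining class of $\Lambda$ is picked up, and the $\XX$-components on both sides match consistently with \eqref{eq:hkkg}.
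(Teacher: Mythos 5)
Your overall strategy --- unwinding \Cref{def:bi-index} and comparing lifts of homotopic paths in $\wPTSt$ --- is the right one; the paper offers no proof of this lemma (it is quoted from \cite[Lemma~2.7]{IQZ}), and path-lifting in the $\ZZ\oplus\ZZ\XX$-cover is exactly how such index identities are established. Your treatment of \eqref{eq:3int} is fine: there $p$ is an interior point, the three tangent directions sit in anticlockwise cyclic order in $\PP T_p\surfo$, the short anticlockwise path from $T_p\lws$ to $T_p\lwa$ factors through $T_p\lwg$ without completing a loop of the fiber, and the covering shifts add. Your derivation of the second equality in \eqref{eq:ksg} from \eqref{eq:hkkg} is also correct, because it is applied at $\alpha(0)$ and $\alpha(1)$, which lie on the auxiliary circle and are therefore genuinely interior points.

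The genuine problem is that for \eqref{eq:ksg} you have set up the wrong case. The hypothesis is ``the left one of Figure~\ref{fig:iid}'', and in that picture $p$ is a \emph{decoration} (drawn $\circ$), the common endpoint of $\us$ and $\ut$, with $\alpha$ the unique anticlockwise arc between them; the transverse crossing in $\isurf\setminus\Tri$ with two choices of $\alpha$ is the \emph{right} picture. So your standing assumption $p\in\isurf\setminus\Tri$, your claim that ``all three intersections lie in $\isurf\setminus\Tri$'', and especially your verification item (iii) (``since $p\notin\Tri$, \dots\ no $-2+\XX$ contribution is picked up'') address a configuration the lemma does not assert. This is not cosmetic: the decoration case is precisely the one used downstream (\Cref{lem:3intindec} and the proof of \Cref{thm:leftint} apply \eqref{eq:ksg} at $z\in\Tri$), and it is exactly there that one must rule out a contribution from the class $-2+\XX$ of the loop around $z$. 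The repair is available but must be stated for the correct hypothesis: both the defining path $\rho$ for $\dind_p(\lws,\lwt)$ and your detour path project to the same proper sub-arc $\alpha$ of the circle around $p$, so the comparison takes place in $\PTSt|_{\alpha}\simeq[0,1]\times\RR\PP^1$ and never encircles $p$; hence no $-2+\XX$ term can appear whether or not $p\in\Tri$, and the only remaining check is that the fiberwise winding numbers of the two paths agree, which is what the normalization $\rho(t)\neq T_{\alpha(t)}a$ in \Cref{def:bi-index} guarantees. As written, your argument proves a statement with a different hypothesis and leaves the asserted case unjustified.
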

\begin{lemma}\label{lem:3intindec}
Let $\lws,\lwt,\lwa$ be bigraded arcs on $\DMS$ which share the same decoration $z$ and sitting in anti-clockwise order in the right one of Figure~\ref{fig:3int1}. We have
\begin{equation}\label{eq:3intindec}
\dind_{\d}(\lws,\lwa)=\dind_{\d}(\lws,\lwt)+\dind_{\d}(\lwt,\lwa).
\end{equation}
\end{lemma}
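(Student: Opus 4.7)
The plan is to mimic the proof of equation \eqref{eq:3int} in Lemma~\ref{lem:bi3ind} via path concatenation in the bigraded cover $\wPTSt$, adapted to the case where the common point is the decoration $z$.

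First I would choose a single embedded anti-clockwise arc $\alpha\colon [0,1] \to \surf\setminus\Tri$ encircling $z$ in a small neighborhood, crossing $\us$, $\ut$, $\ua$ consecutively at parameters $t_1 < t_2 < t_3$ (which is possible precisely because the three arcs sit in anti-clockwise order at $z$). Setting $\alpha_{ij} = \alpha|_{[t_i,t_j]}$, I would apply the path-lifting definition of the bi-index to each of $\dind_z(\lws,\lwt)$, $\dind_z(\lwt,\lwa)$, and $\dind_z(\lws,\lwa)$ using the subarcs $\alpha_{12}$, $\alpha_{23}$, $\alpha_{13}$ respectively, producing three paths $\rho_{12},\rho_{23},\rho_{13}$ in $\PTSt$ together with starting lifts to $\wPTSt$.

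The core step is to verify that the concatenation $\rho_{12} \ast \rho_{23}$ is homotopic rel endpoints to $\rho_{13}$ in $\PTSt$, through paths avoiding the line-field direction. The anti-clockwise ordering guarantees that the three projective tangent directions at $z$ are traversed monotonically inside a single angular sector, so no full loop around $z$ and no full $\pi$-rotation in the $\PP T_z\surfo$-fibre is completed. Lifting this homotopy to $\wPTSt$ and using $\chi$-equivariance, one can compare the $\chi$-translations between the endpoints of the three lifts and read off \eqref{eq:3intindec} directly.

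The main obstacle is bookkeeping the starting points across the concatenation. By definition $\widehat{\rho}_{23}$ starts at $\lwt(\alpha(t_2))$, whereas for concatenation with $\widehat{\rho}_{12}$ one needs the lift of $\rho_{23}$ beginning at $\widehat{\rho}_{12}(t_2)$, and these two differ by $\chi(\dind_z(\lws,\lwt))$. Absorbing this discrepancy into the final translation taking $\widehat{\rho}_{13}(t_3)$ to $\lwa(\alpha(t_3))$ is what yields the additivity. Note that the decoration status of $z$, which contributes a $\chi(-2+\XX)$-monodromy on small loops around decorations, does not interfere here because the argument never closes up such a loop.
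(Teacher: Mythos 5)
Your proposal is correct and follows essentially the same route as the paper: both arguments fix a small anti-clockwise arc around $z$ meeting $\us,\ut,\ua$ in order, split it at the crossing with $\ut$, and let the deck-transformation bookkeeping telescope. The only difference is presentational: the paper equips the circle arc with a bigrading $\lwth$ and applies \eqref{eq:ksg} twice to get the telescoping sum, whereas you unwind the lifting definition of the bi-index and verify the concatenation/homotopy statement directly, which amounts to reproving that instance of \eqref{eq:ksg}.
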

\begin{proof}
Fix a small circle $a\subset\surf\setminus\Tri$ around $z$. Let $\lwth:[0,1]\to a$ be an embedded bigraded arc winding anti-clockwisely at $z$ such that the underlying arc $\theta$ intersect $\us, \ut$ and $\ua$ at $\theta(0), \theta(\frac{1}{2})$ and $\theta(1)$ respectively (see the right one of Figure \ref{fig:3int1}). The arc $\lwth$ is unique up to a change of parametrization. By \Cref{lem:bi3ind}, we have
\begin{equation*}
\begin{split}
\dind_z(\lws,\lwa)&=\dind_{\theta(0)}(\lws,\lwth)-\dind_{\theta(1)}(\lwa,\lwth)\\&=[\dind_{\theta(0)}(\lws,\lwth)-\dind_{\theta(\frac{1}{2})}(\lwt,\lwth)]+[\dind_{\theta(\frac{1}{2})}(\lwt,\lwth)-\dind_{\theta(1)}(\lwa,\lwth)]\\&=\dind_{z}(\lws,\lwt)+\dind_{z}(\lwt,\lwa).
\end{split}
\end{equation*}
\end{proof}

\begin{definition}\label{def:smoothout}
For $\lwt,\lwe\in\wXCA(\surfo)$ satisfying that $\Int_{\surfo}(\ut,\ue)=\frac{1}{2}$ and $\sind_z(\lwt,\lwe)=a$,
let $z\in\Tri$ be their common endpoint.
Denote by $\lwte$ to be the bigraded arc in $\wXCA(\surfo)$ whose underlying arc is obtained by the smoothing out $\lwt\cup\lwe[(a-1)\XX]$ at $z$
and whose grading inherits from $\lwt$. That is, we have $\dind_z(\lwt,\lwte)=0$, cf. Figure \ref{fig:indsum}.
\end{definition}

\begin{proposition}\label{prop:indsum}
For $\lwt, \lwe$ and $\lwte$ in $\wXCA(\surfo)$ as in \Cref{def:smoothout}, we have
\[\dind_{\de}(\lwt,\lwte)+\dind_{\dy}(\lwte,\lwe)+\dind_{\ds}(\lwe,\lwt)=1.\]
\end{proposition}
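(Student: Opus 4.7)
Since $\Int_{\surfo}(\ut,\ue)=\frac{1}{2}$ and $\lwte$ is the smoothing out of $\lwt\cup\lwe[(a-1)\XX]$ at $\ds$, the three arcs $\lwt$, $\lwte$, $\lwe$ are in minimal position and bound a contractible embedded triangle $T\subset\surfo$ whose three decoration corners are $\ds,\de,\dy$ and which has no decoration in its interior. The identity then asserts that the total bigraded ``turning'' around $\partial T$ equals $1$: a discrete Gauss--Bonnet type relation for the bigrading.

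My first step is to reduce to the normalized case $a=1$. The definition of $\lwte$ via the shifted arc $\lwe[(a-1)\XX]$ is designed precisely so that replacing $\lwe$ by $\lwe[(a-1)\XX]$ leaves the bigraded smoothing $\lwte$ invariant (both the underlying arc and the inherited grading). Under this replacement, $\dind_{\ds}(\lwe,\lwt)$ changes by $-(a-1)\XX$ while $\dind_{\dy}(\lwte,\lwe)$ changes by $+(a-1)\XX$, and $\dind_{\de}(\lwt,\lwte)$ is unchanged; these $\XX$-shifts cancel in the sum, so it suffices to establish the identity when $\sind_{\ds}(\lwt,\lwe)=1$.

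For $a=1$, I would introduce a small auxiliary bigraded arc $\lwa$ with one endpoint at $\ds$, lying in the interior of $T$ and crossing $\lwte$ transversely at a single point $p\in\surfoi$. At the vertex $\ds$, the arcs $\lwt,\lwa,\lwe$ sit in anti-clockwise order, so \Cref{lem:3intindec} yields
\[
\dind_{\ds}(\lwe,\lwt)=\dind_{\ds}(\lwe,\lwa)+\dind_{\ds}(\lwa,\lwt).
\]
The normalization $\dind_{\ds}(\lwt,\lwte)=0$ from \Cref{def:smoothout} determines $\dind_{\ds}(\lwa,\lwte)$, and hence $\dind_{p}(\lwa,\lwte)$ via parallel transport along $\lwa$. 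Applying \Cref{lem:bi3ind} at $p$ then relates $\dind_{\de}(\lwt,\lwte)$ and $\dind_{\dy}(\lwte,\lwe)$ to bi-indices at the endpoints of $\lwa$. Summing the three terms, the $\lwa$-dependent contributions cancel in pairs by \Cref{lem:biindsum}, leaving the constant $1$.

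The main obstacle is the careful bookkeeping of the two grading components $\ZZ$ and $\XX$. The $\XX$-parts at the three decoration corners each pick up a summand coming from equation~(\ref{eq:hkkg1}); these must globally cancel because $T$ encloses no interior decoration. The residual $\ZZ$-contribution equals $1$, encoding the winding of the line field $\lambda$ around the contractible loop $\partial T$, which is the only topological invariant that survives after all local cancellations. Once the orientations are consistently fixed, the identity falls out of three local applications of the bi-index lemmas.
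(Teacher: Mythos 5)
Your reduction to $a=1$ is sound: replacing $\lwe$ by $\lwe[(a-1)\XX]$ leaves $\lwte$ and $\dind_{\de}(\lwt,\lwte)$ unchanged while shifting $\dind_{\ds}(\lwe,\lwt)$ by $-(a-1)\XX$ and $\dind_{\dy}(\lwte,\lwe)$ by $+(a-1)\XX$, so the sum is invariant (though the normalization is not actually needed). Your closing paragraph also correctly names the two facts that make the identity true: the winding of the line field around the boundary of the contractible, decoration-free triangle contributes $1$ to the $\ZZ$-part, and the $\XX$-part vanishes. This is exactly the paper's strategy, except that the paper computes the three $\XX$-indices explicitly via the log surface (they are $0$, $(a-1)\XX$ and $(1-a)\XX$, the last by \eqref{eq:hkkg1}) rather than by a monodromy argument.

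The gap is that the mechanism you propose for actually deriving the identity --- the auxiliary arc $\lwa$ combined with \Cref{lem:biindsum}, \Cref{lem:bi3ind} and \Cref{lem:3intindec} --- cannot produce it. Those lemmas are purely local: they compare lifts at a single intersection point, or at points on a small circle around one point. The three bi-indices in the statement live at three distinct decorations $\ds,\de,\dy$, and relating them requires transporting the lifts along the three edges and around the three corners of the triangle; the value $1$ is precisely the evaluation of the grading class $\Lambda$ on the resulting loop in $\PTSt$, which is the fibre class because the triangle is contractible and contains no decoration. Several intermediate steps do not parse: $\dind_{\ds}(\lwa,\lwte)$ is undefined since $\lwa$ and $\lwte$ do not meet at $\ds$; determining $\dind_{p}(\lwa,\lwte)$ ``by parallel transport along $\lwa$'' already presupposes the triangle identity for the sub-triangle with corners $\ds$, $\de$, $p$, so the argument is circular; and \Cref{lem:bi3ind} applied at $p$ involves only arcs through $p$, hence says nothing about $\dind_{\de}(\lwt,\lwte)$ or $\dind_{\dy}(\lwte,\lwe)$, so the claimed pairwise cancellation via \Cref{lem:biindsum} is unsupported. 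The global step you defer to ``bookkeeping'' is the entire content of the proof; it should be carried out explicitly, e.g.\ by trivializing $\PTSt$ over the triangle for the $\ZZ$-part and computing the three $\XX$-indices from the construction of $\lwte$ as the paper does.
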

\begin{proof}
\begin{figure}[h]\centering
\begin{tikzpicture}[scale=.8]
\draw[red,very thick,>=stealth](0,0)to[bend right=13](150:3);
\draw[red,very thick,>=stealth](0,0)to[bend left=13](30:3);
\draw[red,very thick,>=stealth](30:3)to[bend right=13](150:3);
\draw[white](30:3)\nn(150:3)\nn (0,0)\nn;
\draw[red](30:3)\ww(150:3)\ww (0,0)\ww;
\draw[blue!50,-<-=0.5,>=stealth,thick](140:.5)to[bend left=30](40:.5);
\draw[blue!50,-<-=0.7,>=stealth,thick]($(150:3)+(10:.7)$)to[bend left=30]($(150:3)+(-20:.7)$);
\draw[blue!50,-<-=0.7,>=stealth,thick]($(30:3)+(200:.7)$)to[bend left=30]($(30:3)+(170:.7)$);
\draw[red](150:3)node[left]{$\de$} (30:3)node[right]{$\dy$} (0,0)node[below]{$\ds$} (0,2.3)node{$\lwte$} (150:1)node[left]{$\lwt$} (30:1)node[right]{$\lwe$};
\end{tikzpicture}
\caption{The sum of bi-indices of intersections between bigraded arcs via smoothing out} \label{fig:indsum}
\end{figure}
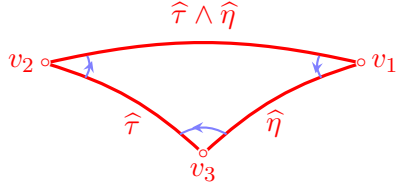
We calculate the two gradings separately.
For the first grading, it is a line field which is determined by $\ho{1}(\PTSt,\ZZ)$.
We can identify all the projectization of the tangent space of any point in the contractible triangle formed by the three arcs
(except the decorations) simultaneously.
Hence, the sum of the first grading is 1 (rotating anti-clockwisely).
For the second grading, we use the log surface in the sense of \cite[\S 2.4]{IQZ}.
By \Cref{def:smoothout}, the segments of $\lwte$ and $\lwe[(a-1)\XX]$ near $\dy$ are in the same sheet of $\log(\surfo)$
and the anti-clockwise angle does not cross the cut (cf. \cite[Figure 5]{IQZ}).
Thus we have
\begin{equation*}
\begin{split}
\ind^{\XX}_{\de}(\lwt,\lwte)+\ind^{\XX}_{\dy}(\lwte,\lwe)+\ind^{\XX}_{\ds}(\lwe,\lwt)&=0+(a-1)\XX+(1-\ind^{\XX}_{\ds}(\lwt,\lwe))\\
    &=(a-1)+(1-a)=0
\end{split}
\end{equation*}
as required, where $\ind_{?}^{\XX}$ denote the second grading.
\end{proof}
\begin{notations}
For $(\ii,\sii)\in\ZZ^2$, we denote $\cap^{\ii+\sii\XX}(\lws,\lwt)$ the set of intersections between $\lws$ and $\lwt$ with bi-index $\ii+\sii\XX$. We will use the notations \begin{gather*}
\Int^{\ii+\sii\XX}_{\Pi}(\lws,\lwt)\colon=\left|\cap^{\ii+\sii\XX}(\lws,\lwt)\cap \Pi\right|,\\
\Int^{\ii+\sii\XX}_{\surfo}(\lws,\lwt)\colon=
\frac{1}{2}\cdot\Int^{\ii+\sii\XX}_{\Tri}(\lws,\lwt)+
\Int^{\ii+\sii\XX}_{\surfoi}(\lws,\lwt),
\end{gather*}
for the bi-index $(\ii+\sii\XX)$ intersection numbers at any proper subset $\Pi$ of $\surfo$ and at all of $\isurf$ respectively.
The \emph{total intersection}
$$\Int_{?}(\lws,\lwt)=\sum_{\ii,\sii\in\ZZ}\Int^{\ii+\sii\XX}_?(\lws,\lwt)$$
is the sum over all bi-indices, where $?=\Tri$ or $\surfoi$.
\end{notations}

\begin{definition}[{\cite{KS,IQZ}}] 
Let $\qz$ and $\qx$ be two formal parameters. The \emph{$\ZZ^2$-graded \txq-intersection} of $\lws,\lwt\in\wXCA(\surfo)$ is defined to be
	\begin{equation}\label{eq:q-int}
	\qqInt(\lws,\lwt)=
	\displaystyle\sum_{\ii,\sii\in\mathbb{Z}}
	\qz^{\ii}\qx^{\sii}\cdot\Int_{\Tri}^{\ii+\sii\XX} (\lws,\lwt)
	+(1+\qz^{-1}\qx)
	\displaystyle\sum_{\ii,\sii\in\mathbb{Z}}
	\qz^{\ii}\qx^{\sii}\cdot\Int_{\surfoi}^{\ii+\sii\XX} (\lws,\lwt).
	\end{equation}
	Note that we have $\qqInt(-,-)\mid_{\qz=\qx=1}=2\Int_{\surfo}(-,-)=\Int_{\bc}(-,-)$, where $\bc$ is the branched double cover of $\surfo$.
\end{definition}
\subsection{Left \txq-deformations as \txq-intersections}\label{sec:left}
Recall that $\disk$ is a disk and $\Tri=\{\deci,\decs,\decz\}$. By Lemma \ref{lem:CA}, we can label simple closed arcs by $\uQ$ as follows. We fix two initial bigraded simple closed arcs and denote them $\lwez$ and $\lwei$ such that $\ind_{\decs}(\lwei,\lwez)=1$ (see Figure \ref{fig:closedarc}).  
\begin{construction}
We define a map
\begin{equation}
\begin{split}
\lwe: \uQ&\to\wXCA(\disk)\\
\frac{r}{s}&\mapsto\lwq,
\end{split}
\end{equation}
as follows. For any rational $\rat\in\ugQ$, by \Cref{rmk:deco} we know that it can be uniquely written as
\[\rat=\frac{p}{q}\oplus\frac{u}{v}.\] We iteratively define that
\begin{equation}
\lwq:=B_{\ue_{\frac{u}{v}}}(\lwe_{\frac{p}{q}})=\widehat{\ue}_{\frac{p}{q}}\wedge\widehat{\ue}_{\frac{u}{v}},
\end{equation}
noticing that $\Int_{\disk}(\ue_{\frac{p}{q}},\ue_{\frac{u}{v}})=\frac{1}{2}$. For negative case, we set $\lwe_{-\infty}:=\lwei$ and define
\begin{equation}
\lwe_{-\frac{r}{s}}:=B_{\ue_{-\frac{p}{q}}}(\lwe_{-\frac{u}{v}}).
\end{equation}
Thus, we get some bigraded closed arcs in $\wXCA(\disk)$ which are $\lwq[\ii+\sii\XX]$, where $\ii,\sii\in\ZZ$ and $\frac{r}{s}\in\uQ$.
\end{construction}
Let $\frac{r}{s}\in\QQ^+$ with $\frac{r}{s}=\frac{p}{q}\oplus\frac{u}{v}$. By \Cref{def:smoothout}, the grading of the new arc $\lwqs$ inherits the grading of $\lwp$. That is, for any bigraded arc $\lws$ intersecting $\lwq$ and $\lwqs$ at $p_1, p_2\in\isurf$ respectively (see Figure \ref{fig:inherit}), we have
\begin{equation}\label{inherit}
\dind_{z}(\lwp,\lwqs)=0,\quad \dind_{p_1}(\lws,\lwp)=\dind_{p_2}(\lws,\lwqs),
\end{equation}
where $z\in\Tri$ is the common endpoint of $\lwp$ and $\lwqs$. The second equation in \eqref{inherit} follows from the fact that $\lwp,\lwqs$ and $\lwqq$ form a contractible triangle.
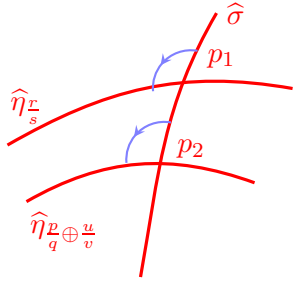
\begin{figure}[ht]\centering
\begin{tikzpicture}[scale=.5]
\draw[red, very thick](-4,-3)to[out=30,in=160](2,-2.5);
\draw[red, very thick](-4.5,-1.5)to[out=30,in=170](3,0);
\draw[red, very thick](-1,-5)to[out=80,in=240](1,2);
\draw(-3,-3)node[below,red]{$\lwqs$} (-4,-1.2)node[above,red]{$\lwq$} (1,2)node[right,red]{$\lws$} (.5,.8)node[right,red]{$p_1$} (-.3,-1.6)node[right,red]{$p_2$};
\draw[blue!50,thick,->-=.6,>=stealth,shift={(-.7,-1.8)}]	(.5,.9) arc (80:185:1);
\draw[blue!50,thick,->-=.6,>=stealth]	(.5,1) arc (80:185:1);
\end{tikzpicture}
\caption{The bigrading inherited from braid twist}\label{fig:inherit}
\end{figure}

From \Cref{prop:indsum}, we know that these three simple closed arcs $\lwp,\lwqs$ and $\lwqq$ (take $\lwt=\lwp, \lwe=\lwqq$ in Figure \ref{fig:indsum}) satisfy
\begin{equation}\label{eq:indsum}
\dind_{\de}(\lwp,\lwqs)+\dind_{\dy}(\lwqs,\lwqq)+\dind_{\ds}(\lwqq,\lwp)=1,
\end{equation}
where $\dy,\de,\ds$ are the corresponding intersecting decorations.

We fix the following setting.
\begin{setting}\label{set}
Recall that in \Cref{rmk:deco}, any fraction $\frac{r}{s}\in\QQ^+$ can be uniquely written as
\[\rat=\frac{p}{q}\oplus\frac{u}{v},\]
with $\frac{p}{q},\frac{u}{v}\in\upQ,uq-pv=1$ and an associated integer $l(\frac{p}{q},\frac{u}{v})$. The corresponding arcs in $\wXCA(\disk)$ of these fractions are $\lwp, \lwqs, \lwqq$, where $\lwp$ and $\lwqq$ intersect at only one decoration $z$ in $\Tri$.
We do not distinguish $\frac{r}{s}$ and $\frac{p}{q}\oplus\frac{u}{v}$ in the following.
\end{setting}
\begin{lemma}\label{lem:ind}
For any two arcs $\lwp, \lwqq$ in $\wXCA(\disk)$ in \Cref{set}, the bi-index is of the form $l(1-\XX)$ where $l\in\NN$, i.e.
\begin{equation}\label{eq:ind}
\dind_z(\lwp, \lwqq)=l(1-\XX),
\end{equation}
except the special case
\begin{equation}\label{eq:indintitial}
\dind_{\decs}(\lwez, \lwei)=\XX-1.
\end{equation}
Here $l=l(\frac{p}{q},\frac{u}{v})$ in \Cref{rmk:deco}.
\end{lemma}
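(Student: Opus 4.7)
The plan is to induct on the depth of the edge $(\frac{p}{q},\frac{u}{v})$ in the Farey graph, starting from the initial edge $(\frac{0}{1},\frac{1}{0})$. For the base, the initial arcs $\lwez,\lwei$ are horizontal and meet only at $\decs$ with the normalization fixed at the start of Section~\ref{sec:left}, giving $\dind_{\decs}(\lwei,\lwez)=1$; then \Cref{lem:biindsum} applied at the decoration $\decs$ yields $\dind_{\decs}(\lwez,\lwei)=\XX-1$, which is exactly the exceptional case stated.

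For the inductive step, every non-initial edge $(\frac{p}{q},\frac{u}{v})$ arises by a Farey refinement $\frac{r_0}{s_0}=\frac{p_0}{q_0}\oplus\frac{u_0}{v_0}$ of a unique parent edge $(\frac{p_0}{q_0},\frac{u_0}{v_0})$ at depth one less, producing the triangle of arcs $\lwe_{\frac{p_0}{q_0}},\lwe_{\frac{r_0}{s_0}},\lwe_{\frac{u_0}{v_0}}$; the new edge is either the left side $(\frac{p_0}{q_0},\frac{r_0}{s_0})$ or the right side $(\frac{r_0}{s_0},\frac{u_0}{v_0})$. If it is the left side, \Cref{def:smoothout} applied to $\lwe_{\frac{r_0}{s_0}}=\lwe_{\frac{p_0}{q_0}}\wedge\lwe_{\frac{u_0}{v_0}}$ gives $\dind_z(\lwp,\lwqq)=0$ at once, matching $l=0$. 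If it is the right side, I would apply \Cref{prop:indsum} to the triangle: the left-vertex contribution is $0$ by the previous case, the parent-vertex contribution equals $\XX-\dind_{z'}(\lwe_{\frac{p_0}{q_0}},\lwe_{\frac{u_0}{v_0}})$ by \Cref{lem:biindsum}, and substituting the inductive hypothesis $\dind_{z'}(\lwe_{\frac{p_0}{q_0}},\lwe_{\frac{u_0}{v_0}})=l_0(1-\XX)$ produces
\[
\dind_z(\lwp,\lwqq) \;=\; 1 - \bigl(\XX - l_0(1-\XX)\bigr) \;=\; (l_0+1)(1-\XX),
\]
incrementing $l_0$ by one. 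When the parent is the exceptional initial edge, the same arithmetic with $\XX-1$ in place of $l_0(1-\XX)$ gives $\dind_z(\lwp,\lwqq)=0$, consistent with $l=0$ for the right child $(\frac{1}{1},\frac{1}{0})$; the left child $(\frac{0}{1},\frac{1}{1})$ also has $l=0$.

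The recursion $l=0$ on left edges and $l=l_0+1$ on right edges (with the exceptional base value $l=-1$) is precisely the rule encoded in the $\q$-Farey recursion of \Cref{qfareydef}, so the $l$ produced by our induction coincides on every non-initial edge with the $l(\frac{p}{q},\frac{u}{v})$ of \Cref{rmk:deco}, confirming $l\in\NN$. The main obstacle is the bookkeeping at the exceptional base: since the inductive hypothesis there takes the non-standard form $\XX-1$ rather than $l(1-\XX)$ with $l\in\NN$, the right-edge computation must be performed separately for that single step; after that the induction runs uniformly, and one must additionally check that the inductive recursion genuinely matches the continued-fraction description in \Cref{rmk:deco}, which is an elementary but not wholly transparent combinatorial verification.
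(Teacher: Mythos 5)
Your proof is correct and follows essentially the same route as the paper's: the index $0$ on one new edge comes from the inherited grading of \Cref{def:smoothout}, and the other new edge is computed by combining \Cref{prop:indsum} with the reflection formula \eqref{eq:hkkg1} of \Cref{lem:biindsum}, yielding $(l_0+1)(1-\XX)$. The only cosmetic difference is that you induct on Farey depth rather than on $l$ (arguably the cleaner parameter, since left children reset $l$ to $0$), and the final identification of the topological recursion with the combinatorial $l$ of \Cref{rmk:deco} that you flag as remaining is likewise left implicit in the paper, where it is encoded in the weighted Farey recursion of \Cref{qfareydef}.
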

\begin{proof}
For the special case, we know that
\begin{equation*}
\dind_{\decs}(\lwez, \lwei)=\XX-\dind_{\decs}(\lwei,\lwez)=\XX-1,
\end{equation*}
from \eqref{eq:hkkg1}. In general, we prove the lemma by induction on $l$. For initial bigraded simple closed arcs $\lwez, \lwe_{\frac{1}{1}}$ and $\lwei$ in $\wXCA(\disk)$, we have $\ind_{\decz}(\lwez, \lwe_{\frac{1}{1}})=\ind_{\deci}(\lwe_{\frac{1}{1}},\lwei)=0$ and thus the lemma holds obviously. We assume that \eqref{eq:ind} holds for $l$ and consider the $l+1$ case. For any $\lwp, \lwqq$ in $\wXCA(\disk)$ in \Cref{set}, we assume that they intersect at only one decoration $\ds$ with
\begin{equation}
\dind_{\ds}(\lwp, \lwqq)=l(1-\XX),
\end{equation}
where $l\in\NN$. By \eqref{eq:hkkg1}, we have
\[\dind_{\ds}(\lwqq,\lwp)=(l+1)\XX-l.\]
Since the grading of $\lwqs$ inherits the grading of $\lwp$, we have
\begin{equation}\label{eq:indqs}
\dind_{\dy}(\lwp,\lwqs)=0.
\end{equation}
By \eqref{eq:indsum}, we deduce that
\begin{equation}\label{eq:indsqq}
\begin{split}
\dind_{\de}(\lwqs,\lwqq)&=1-0-[(l+1)\XX-l]\\&=(l+1)-(l+1)\XX.
\end{split}
\end{equation}
Finally, combining \eqref{eq:indqs} and \eqref{eq:indsqq}, the lemma is true.
\end{proof}
\begin{theorem}\label{thm:leftint}
For any rational number $\frac{r}{s}\in\uQ$, we have
\begin{equation}\label{eq:leftint}
\Big[\rat\Big]^{\flat}_{\q}=\frac{\varepsilon\Int^{\q}(\lwq,\lwez)}{\Int^{\q}(\lwq,\lwei)}\bigg|_{\q=\qz^{-1}\qx},
\end{equation}
corresponding to the left \txq-deformation of $\rs$, where
\begin{equation*}
\varepsilon=
\begin{cases}
\qz^{-1},&\frac{r}{s}\geq 0,\\
-1,&\frac{r}{s}<0.
\end{cases}
\end{equation*}
In particular, for $\frac{r}{s}\in\upQ$, we have
\emph{
\begin{align}
\begin{split}
\left \{
\begin{array}{ll}
\qrb{\rs}&=\qz^{-1}\Int^{\q}(\lwq,\lwez)\big|_{\q=\qz^{-1}\qx},\\
\qsb{\rs}&=\Int^{\q}(\lwq,\lwei)\big|_{\q=\qz^{-1}\qx}.
\end{array}
\right.
\end{split}
\end{align}}

\begin{figure}[ht]\centering
\makebox[\textwidth][c]{
	\begin{tikzpicture}[scale=.47,yscale=.8,xscale=-1,font=\tiny]
	\draw[very thick](0,0)circle(6);
\draw(-3,0)edge[dashed,red, very thick](0,0);
\draw(0,0)edge[dashed,red, very thick](3,0);
\draw[red, very thick](-3,0) to[out=40,in=180](-.2,1.2)to[out=0,in=180](3,-1.2)to[out=0,in=260](4,0)to[out=80,in=0](0,4) to[out=180,in=90](-4.5,0) to[out=270,in=180](-2,-2)to[out=0,in=280](0,0);
\draw(-3,0)node[below,black]{$\deci$} (-.5,0)node[below,black]{$\decs$} (3,0)node[below,black]{$\decz$};
\draw (2,3)node[below,red]{$\lwe_{\frac{3}{2}}$};
	\draw(-3,0)node[white] {$\bullet$} node[red] {$\circ$};
	\draw(0,0)node[white] {$\bullet$} node[red] {$\circ$};
	\draw(3,0)node[white] {$\bullet$} node[red] {$\circ$};
 \draw(0,6)node[cyan] {$\bullet$} (250:6)node[cyan] {$\bullet$} (290:6)node[cyan] {$\bullet$};
\draw(0,6)edge[cyan, very thick](250:6) (0,6)edge[cyan, very thick](290:6);
\draw(-2.4,-3.6)node[cyan]{$\wgi$} (2.4,-3.6)node[cyan]{$\wgz$};
 \begin{scope}[shift={(-13,0)}]
\draw[very thick](0,0)circle(6);
\draw(-3,0)edge[dashed, red, very thick](0,0);
\draw(0,0)edge[dashed,red, very thick](3,0);
\draw[red, very thick,yscale=-1](3,0) to[out=90,in=0](0,3)to[out=180,in=90](-4,0) to[out=270,in=180](-2.5,-2)to[out=0,in=270](0,0);
\draw(-3,0)node[above,black]{$\deci$} (0,0)node[below,black]{$\decs$} (3,0)node[above,black]{$\decz$};
\draw (-1.8,2.7)node[red]{$\lwe_{-\frac{2}{1}}$};
	\draw(-3,0)node[white] {$\bullet$} node[red] {$\circ$};
	\draw(0,0)node[white] {$\bullet$} node[red] {$\circ$};
	\draw(3,0)node[white] {$\bullet$} node[red] {$\circ$};
 \draw(0,6)node[cyan] {$\bullet$} (250:6)node[cyan] {$\bullet$} (290:6)node[cyan] {$\bullet$};
\draw(0,6)edge[cyan, very thick](250:6) (0,6)edge[cyan, very thick](290:6);
\draw(-2.4,-3.6)node[cyan]{$\wgi$} (2.4,-3.6)node[cyan]{$\wgz$};
 \end{scope}
 \begin{scope}[shift={(13,0)}]
	\draw[very thick](0,0)circle(6);
\draw(-3,0)edge[dashed, red, very thick](0,0);
\draw(0,0)edge[dashed,red, very thick](3,0);
\draw[red, very thick](-3,0)to[out=80,in=180](0,3)to[out=0,in=100](3,0);
\draw(-3,0)node[below,black]{$\deci$} (0,0)node[below,black]{$\decs$} (3,0)node[below,black]{$\decz$};
\draw(-1.5,0)node[above,red]{$\lwei$} (1.5,0)node[above,red]{$\lwez$} (0,1.5)node[above,red]{$\lwe_1$};
	\draw(-3,0)node[white] {$\bullet$} node[red] {$\circ$};
	\draw(0,0)node[white] {$\bullet$} node[red] {$\circ$};
	\draw(3,0)node[white] {$\bullet$} node[red] {$\circ$};
 \draw(0,6)node[cyan] {$\bullet$} (250:6)node[cyan] {$\bullet$} (290:6)node[cyan] {$\bullet$};
\draw(0,6)edge[cyan, very thick](250:6) (0,6)edge[cyan, very thick](290:6);
\draw(-2.4,-3.6)node[cyan]{$\wgi$} (2.4,-3.6)node[cyan]{$\wgz$};
 \end{scope}
	\end{tikzpicture}
}
\caption{Red arcs: examples of bigraded closed arcs in $\wXCA(\disk)$}\label{fig:closedarc}
\end{figure}
\end{theorem}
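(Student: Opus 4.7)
The plan is to induct on the Farey depth of $\rat$, using the decomposition $\rat=\frac{p}{q}\oplus\frac{u}{v}$ and the topological identity $\lwq=\lwp\wedge\lwqq$ of \Cref{set}, matching the Farey sum recursion for $\qrb{\rs}$ and $\qsb{\rs}$ from \Cref{qfareydef} against a corresponding recursion for the $\q$-intersections. The negative case follows from \Cref{def:neg}: the arcs $\lwe_{-r/s}$ are the images of $\lwe_{r/s}$ under the orientation-reversing reflection of $\disk$ swapping $\deci$ and $\decz$, which exchanges $\lwez$ with $\lwei$ and switches the prefactor from $\qz^{-1}$ to $-1$, in agreement with the identity $[-r/s]^{\flat}_{\q}=-\q^{-1}[r/s]^{\flat}_{\q^{-1}}$. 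So we concentrate on $\rat\in\upQ$ and aim to prove
\[\qrb{\rs}=\qz^{-1}\Int^{\q}(\lwq,\lwez)\big|_{\q=\qz^{-1}\qx},\qquad \qsb{\rs}=\Int^{\q}(\lwq,\lwei)\big|_{\q=\qz^{-1}\qx}.\]

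For the base cases $\rat\in\{\zero,\infi,1\}$, the three arcs $\lwez,\lwei$ and $\lwe_{1}$ are the initial simple closed arcs drawn in Figure \ref{fig:closedarc}; their pairwise $\q$-intersections (including the initial bi-index $\dind_{\decs}(\lwei,\lwez)=1$ fixed at the start of Section \ref{sec:left}) can be read off directly and matched against $\qrb{\zero}=\q-1$, $\qsb{\zero}=\q$, $\qrb{\infi}=1$, $\qsb{\infi}=1-\q$ from \Cref{qdef} together with the one-step Farey value $[1]^{\flat}_{\q}$ supplied by \Cref{qfareydef}. For the inductive step, by \Cref{qfareydef} it is enough to establish the topological Farey recursion
\[\Int^{\q}(\lwq,\lws)=\Int^{\q}(\lwp,\lws)+\qz^{l+1}\qx^{-(l+1)}\cdot\Int^{\q}(\lwqq,\lws)\]
for $\lws\in\{\lwez,\lwei\}$, since $\qz^{l+1}\qx^{-(l+1)}=\qid^{l+1}$ under the substitution $\q=\qz^{-1}\qx$. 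Because $\lwq=\lwp\wedge\lwqq$ is obtained by smoothing $\lwp\cup\lwqq$ at the common decoration $\ds$, the underlying arc of $\lwq$ splits into a $\lwp$-portion and a $\lwqq$-portion; placing $\lws$ in minimal position with all three arcs, the intersections of $\lws$ with $\lwq$ correspond bijectively to those lying on each of these portions, together with the half-integer endpoint corrections at $\ds$ that are built into the convention for $\Int$.

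The bi-index bookkeeping is the heart of the argument. On the $\lwp$-portion, equation \eqref{inherit} identifies $\dind_{\bullet}(\lws,\lwq)$ with $\dind_{\bullet}(\lws,\lwp)$, so these intersections contribute $\Int^{\q}(\lwp,\lws)$ unchanged. On the $\lwqq$-portion, $\lwq$ and $\lwqq$ differ by a constant grading offset equal to $\dind_{\dy}(\lwq,\lwqq)=(l+1)(1-\XX)$; this is computed exactly as in the proof of \Cref{lem:ind}, by applying \Cref{prop:indsum} to the contractible triangle $\lwp,\lwq,\lwqq$ together with the inheritance $\dind_{\de}(\lwp,\lwq)=0$ and the input $\dind_{\ds}(\lwp,\lwqq)=l(1-\XX)$ from \Cref{lem:ind}. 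Consequently every intersection of $\lws$ on the $\lwqq$-portion has its bi-index shifted by $(l+1)(1-\XX)$ relative to its bi-index with $\lwqq$, producing the prefactor $\qz^{l+1}\qx^{-(l+1)}$; summing the two portions yields the claimed recursion, and induction closes the argument. The main obstacle is precisely this bi-index shift on the $\lwqq$-portion, which requires both identifying the constant grading offset via the contractible-triangle identity and accounting correctly for the half-intersection at $\ds$; all remaining steps are routine bookkeeping together with the finite base-case check.
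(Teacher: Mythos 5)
Your plan follows essentially the same route as the paper's proof: induction on the Farey decomposition $\rat=\frac{p}{q}\oplus\frac{u}{v}$, splitting the intersections of $\lws$ with $\lwq$ into a $\lwp$-portion (bi-indices inherited via \eqref{inherit}) and a $\lwqq$-portion (bi-indices shifted by $\dind(\lwqs,\lwqq)=(l+1)(1-\XX)$ from \Cref{lem:ind}), plus the careful check that the half-intersection at the shared decoration converts into the correct interior contribution. The only ingredient the paper adds that you leave implicit is the lift to the $\ZZ^2$-covering together with Pick's theorem, which justifies that the contractible triangle contains no decorations or punctures and hence that the two portions really account bijectively for all intersections in minimal position.
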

\begin{proof}
For $0$ and $\infty$, we have
\begin{equation*}
\Int^{\q}_{\decs}(\lwez,\lwei)=\qz^{-1}\qx~{\rm and}~\Int^{\q}_{\decs}(\lwei,\lwez)=\qz
\end{equation*}
by definition. We set
\begin{equation*}
\qrb{\zero}=\qx-\qz~{\rm and}~\qsb{\infi}=1-\qz^{-1}\qx.
\end{equation*}
We prove the non-negative case by induction using \Cref{set}. At starting step, the fractions in the theorem are $\frac{\qz^{-1}\qx-1}{\qz^{-1}\qx}$ and $\frac{1}{1-\qz^{-1}\qx}$, which coincide the left $\q$-deformation of $0$ and $\infi$ respectively when $\q=\qz^{-1}\qx$ We assume that the formula \eqref{eq:leftint} holds for $\frac{p}{q},\frac{u}{v}\in\upQ$ in \Cref{set} which satisfies
\begin{equation*}
\dind_z(\lwp, \lwqq)=l(1-\XX),
\end{equation*} where $l=l(\frac{p}{q}\oplus\frac{u}{v})\in\NN$ and $z$ is the common endpoint of $\lwp$ and $\lwqq$. We use the $\ZZ^2$-covering $\ubc$ to compute the \txq-intersection between $\lwqs$ and $\lwez$~(and $\lwei$ is similar). We lift the arcs in $\wXCA(\disk)$ to lines in $\ubc$. Then $\lwez$ becomes a series of horizontal lines which pass through $\decs$ and $\decz$. The topological triangle $T$ in $\disk$ bounded by $\lwp,\lwqq$ and $\lwqs$ becomes a triangle $\widetilde{T}$ in $\RR^2$ up to translation (or reflection). We may assume that the vertices of $\widetilde{T}$ are $\widetilde{z},\widetilde{z}+(p,q), \widetilde{z}+(r,s)$, which says that $z', z, z''$ in $T$. The area of $\widetilde{T}$ equals to $\frac{1}{2}$ by the relation $uq-pv=1$. By Pick's theorem, there are no decorations or punctures in $\widetilde{T}$. The intersections between $\lwqs$ and $\lwez$ consist of two parts (see Figure \ref{fig:indinh}):
\begin{itemize}
\item intersections below and on $a$, which inherit from $\lwp$, and
\item intersections above $a$, which is induced from $\lwqq$.
\end{itemize}
\begin{figure}[ht]\centering
\begin{tikzpicture}[scale=.6]
\draw[dashed, very thick,>=stealth] (-2,0)--(2,0) (0,2)--(4,2) (1,3)--(5,3) (2,6)--(6,6) (4,9)--(8,9);
\draw[red,very thick,>=stealth] (0,0)--(4,3)--(6,9) (0,0)--(6,9);
\draw[white](0,0)\nn(4,3)\nn (6,9)\nn;
\draw[red](0,0)\ww(4,3)\ww (6,9)\ww;
\draw (2,1)node[right]{$\lwp$} (5.5,7)node[right]{$\lwqq$} (3,5)node[left]{$\lwqs$} (2,0)node[right]{$\lwez$} (0,0)node[below]{$z'$} (4,3)node[below]{$z$} (6,9)node[above]{$z''$} (5,3)node[right]{$a$} (2,3)node[below]{$p$};
\end{tikzpicture}
\caption{Intersections whose bi-indices inherit (resp. are induced from) the one between $\lwp$ (resp. $\lwqq$) and $\lwez$}\label{fig:indinh}
\end{figure}
For the first case, if the two intersections are both either in the interior or at decorations, the bi-indexes are same. For the second case, the two bi-indexes differ from $\dind_{z''}(\lwqs,\lwqq)=(l+1)\cdot(1-\XX)$. Thus we have
\begin{equation}
\Int^{\q}_{\disk\backslash\{p\}}(\lwqs,\lwez)=\Int^{\q}_{\disk\backslash\{z\}}(\lwp,\lwez)+\qz^{l+1}\qx^{-l-1}\cdot\Int^{\q}_{\disk\backslash\{z\}}(\lwqq,\lwez).
\end{equation}
For the intersection on $a$, by inheritance, we have bi-index given directly by
\begin{equation}
\dind_{\pn}(\lwez,\lwqs)=\dind_{\pt}(\lwez,\lwp).
\end{equation}
Hence we have
\begin{equation*}
\begin{split}
\dind_{\pn}(\lwqs,\lwez)&=1-\dind_{\pn}(\lwez,\lwqs)=1-\dind_{\pt}(\lwez,\lwp)\\&=1-\XX+\dind_{\pt}(\lwp,\lwez).
\end{split}
\end{equation*}
Thus, the bi-index in $\diski$ contributes $(1-\XX)+\dind_{\pt}(\lwp, \lwez)$ and $\dind_{\pt}(\lwp, \lwez)$ in \txq-intersection from \eqref{eq:q-int}.

On the other hand, we have
\begin{equation*}
\begin{split}
\dind_{\pt}(\lwqq,\lwez)&=\dind_{\pt}(\lwp,\lwez)-\dind_{\pt}(\lwp,\lwqq)\\&=[\dind_{\pt}(\lwp,\lwez)+(1-\XX)]-[\dind_{\pt}(\lwp,\lwqq)+(1-\XX)]\\&=[\dind_{\pt}(\lwp,\lwez)+(1-\XX)]-(l+1)\cdot(1-\XX).
\end{split}
\end{equation*}
Thus, we deduce that
\begin{equation}
\Int^{\q}_z(\lwqs,\lwez)=\Int^{\q}_z(\lwp,\lwez)+\qz^{l+1}\qx^{-l-1}\cdot\Int^{\q}_z(\lwqq,\lwez).
\end{equation}
Therefore, we have
\begin{equation}
\Int^{\q}(\lwqs,\lwez)=\Int^{\q}(\lwp,\lwez)+\qz^{l+1}\qx^{-l-1}\cdot\Int^{\q}(\lwqq,\lwez),
\end{equation}
which coincides with $\qrb{\rs}$ with a multiplication of $\qz^{-1}$ if we take $\q=\qz^{-1}\qx$.
Similarly, we deduce that
\begin{equation}
\Int^{\q}(\lwqs,\lwei)=\Int^{\q}(\lwp,\lwei)+\qz^{l+1}\qx^{-l-1}\cdot\Int^{\q}(\lwqq,\lwei).
\end{equation}
If we take $\qz^{-1}\qx=\q$, the fraction we get in the theorem coincides the left \txq-deformation of $\frac{r}{s}=\frac{p}{q}\oplus\frac{u}{v}$, which completes the proof.
\end{proof}
\begin{example}
We give examples of $\frac{3}{2}$ and $-2$ (see Figure \ref{fig:closedarc}) for the left $\q$-deformations. We compute that
\begin{align*}
\begin{split}
\left \{
\begin{array}{ll}
\Int^{\q}(\lwe_{\frac{3}{2}},\lwez)&=\qz+\qx+\qz^3\qx^{-2},\\
\Int^{\q}(\lwe_{\frac{3}{2}},\lwei)&=1+\qz^2\qx^{-2};
\end{array}
\right.
\end{split}
\end{align*}
and
\begin{align*}
\begin{split}
\left \{
\begin{array}{ll}
\Int^{\q}(\lwe_{-\frac{2}{1}},\lwez)&=\qz^3\qx^{-2}+\qz,\\
\Int^{\q}(\lwe_{-\frac{2}{1}},\lwei)&=\qx.
\end{array}
\right.
\end{split}
\end{align*}
By \Cref{thm:leftint}, we have
\begin{equation*}
\Big[\frac{3}{2}\Big]^{\flat}_{\q}=\frac{\qz^{-1}\Int^{\q}(\lwe_{\frac{3}{2}},\lwez)}{\Int^{\q}(\lwe_{\frac{3}{2}},\lwei)}\bigg|_{\q=\qz^{-1}\qx}=\frac{\q^3+\q^2+1}{\q^2+1},
\end{equation*}
and
\begin{equation*}
\Big[-\frac{2}{1}\Big]^{\flat}_{\q}=-\frac{\Int^{\q}(\lwe_{-\frac{2}{1}},\lwez)}{\Int^{\q}(\lwe_{-\frac{2}{1}},\lwei)}\bigg|_{\q=\qz^{-1}\qx}=-\frac{\q^2+1}{\q^3}.
\end{equation*}
\end{example}
\subsection{Right \txq-deformations as \txq-intersections}\label{sec:right}
In this subsection, we add a finite set $\M$ of \emph{(open) marked points} to $\bdy$ satisfying $|\M|=|\Tri|$ and get a decorated marked surface (or \emph{DMS} for short). We still denote the DMS by $\surfo$.
An arc $c$ is called \emph{open} if $c(0)$ and $c(1)$ are in $\M$, without self-intersections in $\surfoi$. We call two open arcs do not cross each other if they do not have intersections in $\surfoi$.

We also have bigraded open arcs as before. For an open arc $\gamma$, we define the \emph{$\ZZ^2$-graded \txq-intersection} between a lift $\lwg$ of $\gamma$ and $\lwt\in\wXCA(\surfo)$ to be
	\begin{equation}\label{eq:q-int2}
	\qqInt(\lwg,\lwt)=
	\displaystyle\sum_{\ii,\sii\in\mathbb{Z}}
	\qz^{\ii}\qx^{\sii}\cdot\Int_{\surfoi}^{\ii+\sii\XX} (\lwg,\lwt).
	\end{equation}
 Note that we have $\qqInt(\lwg,\lwt)\mid_{\qz=\qx=1}=\Int_{\surfo}(\ug,\ut)$. We define a special class of open (bigraded) arcs.
\begin{definition}\label{def:f.f.a.s.+dual.a.s.}
An \emph{open full formal arc system} $\A=\{\ug_1,\cdots,\ug_n \}$ of a DMS $\surfo$ is a collection of pairwise non-crossing open arcs, such that it divides the surface $\surfo$ into polygons, called \emph{$\A$-polygons}, satisfying that each $\A$-polygon contains exactly one decoration. We call $\us\in\CA(\surfo)$ the \emph{dual} to $\ug_i$, if $\ug_i$ intersects it once and $\ug_j$ does not intersect it for any $j\neq i$. Denote $s_i$ the dual to $\ug_i$ and $\A^*=\{s_1,\cdots,s_n\}$.
\end{definition}
Let $\wg_1,\cdots,\wg_n, \widehat{s}_1,\cdots,\widehat{s}_n$ be their bigraded lifts with
\begin{equation}\label{eq:dual}
\dind(\wg_i,\widehat{s}_j)=\delta_{ij}.
\end{equation}

We add three open marked points to the boundary of $\disk$ in Section \ref{sec:left}. Let $\ug_{\zero}$, $\ug_{\infi}$ be two open arcs which form an open arc system in $\disk$ and intersect with $\ue_{\zero}$ and $\ue_{\infi}$ transitively only once respectively. Let $\wgz, \wgi$ be their bigraded lifts respectively which satisfy
\begin{equation}
\dind(\wgi,\lwei)=\dind(\wgz,\lwez)=0.
\end{equation}
We draw them as blue arcs in Figure \ref{fig:closedarc}.
\begin{theorem}\label{thm:rightint}
For any rational number $\frac{r}{s}\in\uQ$, we have
\begin{equation}\label{eq:rightint}
\Big[\rat\Big]^{\sharp}_{\q}=\frac{\varepsilon\Int^{\q}(\wgi, \lwq)}{\Int^{\q}(\wgz, \lwq)}\bigg|_{\q=\qz^{-1}\qx},
\end{equation}
corresponding to the right \txq-deformation of $\rs$, where
\begin{equation*}
\varepsilon=
\begin{cases}
1,&\frac{r}{s}\geq0,\\
-\qz^{-1},&\frac{r}{s}<0,
\end{cases}
\end{equation*}
and the polynomials in the numerator and denominator are polynomials in $\ZZ[\qz^{-1}\qx]$. In particular, for $\frac{r}{s}\in\upQ$, we have
\emph{
\begin{align}
\begin{split}
\left \{
\begin{array}{ll}
\qr^{\sharp}(\rs)&=\Int^{\q}(\wgi, \lwq)\big|_{\q=\qz^{-1}\qx},\\
\qs^{\sharp}(\rs)&=\Int^{\q}(\wgz, \lwq)\big|_{\q=\qz^{-1}\qx}.
\end{array}
\right.
\end{split}
\end{align}}
\end{theorem}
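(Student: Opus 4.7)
The plan is to mirror the strategy of \Cref{thm:leftint}: induct on the depth of the Farey decomposition $\frac{r}{s}=\frac{p}{q}\oplus\frac{u}{v}$ of \Cref{set}, and establish the twin recursions
\begin{equation*}
\Int^{\q}(\wgi,\lwqs) = \Int^{\q}(\wgi,\lwp) + \mu_l\cdot\Int^{\q}(\wgi,\lwqq),\quad
\Int^{\q}(\wgz,\lwqs) = \Int^{\q}(\wgz,\lwp) + \mu_l\cdot\Int^{\q}(\wgz,\lwqq),
\end{equation*}
where $\mu_l$ is a monomial in $\qz,\qx$ that specializes to $\q^{l+1}$ under $\q=\qz^{-1}\qx$. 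Combined with the base cases $\frac{r}{s}\in\{0,\infty\}$, these reproduce the defining recursions of $\qr^{\sharp},\qs^{\sharp}$ from \Cref{qfareydef}.

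For the base cases, the dual normalization $\dind(\wgi,\lwei)=\dind(\wgz,\lwez)=0$ together with the fact that $\wgi$ does not cross $\lwez$, nor $\wgz$ cross $\lwei$, immediately yields $\Int^{\q}(\wgi,\lwei)=\Int^{\q}(\wgz,\lwez)=1$ and $\Int^{\q}(\wgi,\lwez)=\Int^{\q}(\wgz,\lwei)=0$, matching $\qr^{\sharp}(\infty),\qs^{\sharp}(\infty),\qr^{\sharp}(0),\qs^{\sharp}(0)$. The inductive step rests on the contractible triangle $T\subset\disk$ with sides $\lwp,\lwqq,\lwqs$ produced by the smoothing of \Cref{prop:indsum}. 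Since $\wgi,\wgz$ are open arcs, they contribute no intersections at decorations in \eqref{eq:q-int2}, and each interior intersection of $\wgi$ (or $\wgz$) with $\lwqs$ corresponds, via $T$, uniquely to a crossing that enters $T$ either through $\lwp$ or through $\lwqq$. By the inheritance relation \eqref{inherit}, bi-indices on the $\lwp$-side coincide with those of $\wgi\cap\lwp$ and contribute $\Int^{\q}(\wgi,\lwp)$ unchanged. On the $\lwqq$-side, combining \Cref{lem:bi3ind}, \Cref{lem:biindsum}, \Cref{lem:ind} and \Cref{prop:indsum} shifts each bi-index by a term determined by $\dind(\lwqq,\lwqs)=(l+1)(1-\XX)$, which after the specialization $\q=\qz^{-1}\qx$ becomes the scalar $\q^{l+1}$; this identifies $\mu_l$ and closes the recursion.

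The remaining two items are tidier. The integrality claim that $\qr^{\sharp},\qs^{\sharp}\in\ZZ[\qz^{-1}\qx]$ follows by induction, since every bi-index produced in the triangle analysis has the form $i(1-\XX)$ by \Cref{lem:ind}, so every monomial appearing in the sum is of the form $(\qz^{-1}\qx)^k$. The negative case follows from \Cref{def:neg} together with the reflection symmetry of $\disk$ that sends $\lwq$ to $\lwe_{-r/s}$ and exchanges $\qz\leftrightarrow\qx$, accounting for the prefactor $\varepsilon=-\qz^{-1}$. I expect the main technical obstacle to be the bi-index bookkeeping on the $\lwqq$-side of $T$, since the theorem puts $\wgi$ (an open arc) as the first argument of $\Int^{\q}$ whereas \Cref{thm:leftint} put the closed arc first; resolving this requires applying \Cref{lem:biindsum} to convert between the two orientation conventions before the shift $\mu_l=\q^{l+1}$ can be read off.
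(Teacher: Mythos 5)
Your proposal is correct and follows essentially the same route as the paper: induction on the Farey decomposition of \Cref{set}, base cases from the dual normalization, and the inductive step splitting the interior intersections of $\wgz$ (resp.\ $\wgi$) with $\lwqs$ into those inherited from $\lwp$ (bi-indices unchanged) and those induced from $\lwqq$ (bi-indices shifted by $(l+1)(\XX-1)$, giving the multiplier $\qz^{-l-1}\qx^{l+1}=\q^{l+1}$), which reproduces the recursion of \Cref{qfareydef}. You also correctly identify the two simplifications the paper uses, namely that open arcs meet closed arcs only in $\surfoi$ and that the argument-order swap relative to \Cref{thm:leftint} is what flips $\q^{-(l+1)}$ to $\q^{l+1}$.
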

\begin{proof}
The theorem follows the same way of the left version and we prove the non-negative case by induction using \Cref{set}. For the starting case, we have
\begin{equation*}
\begin{split}
\Int(\wgi, \lwez)&=\Int(\wgz, \lwei)=0,\\ \Int(\wgi, \lwei)&=\Int(\wgz, \lwez)=\qz^0\qx^0=1.
\end{split}
\end{equation*}
Thus, they coincide with the right \txq-deformation of 0 and $\infty$ respectively. We assume that the formula \eqref{eq:rightint} holds for $\frac{p}{q},\frac{u}{v}\in\upQ$ in \Cref{set} which satisfy
\begin{equation*}
\dind_z(\lwp, \lwqq)=l(1-\XX),
\end{equation*} where $l=l(\frac{p}{q}\oplus\frac{u}{v})\in\NN$. Similar as the left version, the intersections between $\wgz$ and $\lwqs$ consist of two parts. One inherits from $\lwp$, whose bi-indices are same; and the other one is induced from $\lwqq$, whose bi-indices differ from $-\dind_{z''}(\lwqs,\lwqq)=(l+1)\cdot(\XX-1)$. Notice that the intersections are all in the interior, which simplifies things a lot. Thus, we have
\begin{equation}\label{eq:rightformula}
\Int^{\q}(\wgz,\lwqs)=\Int^{\q}(\wgz,\lwp)+\qz^{-l-1}\qx^{l+1}\cdot\Int^{\q}(\wgz,\lwqq),
\end{equation}
which coincides denominator of the right \txq-deformation of $\frac{p}{q}\oplus\frac{u}{v}$ if we take $\qz^{-1}\qx=\q$. Similarly, we deduce that
\begin{equation}
\Int^{\q}(\wgi,\lwqs)=\Int^{\q}(\wgi,\lwp)+\qz^{-l-1}\qx^{l+1}\cdot\Int^{\q}(\wgi,\lwqq).
\end{equation}
Thus, we finish the proof.
\end{proof}
\begin{example}
We continue the examples of $\frac{3}{2}$ and $-2$ (see Figure \ref{fig:closedarc}) for the right $\q$-deformations. We compute that
\begin{align*}
\begin{split}
\left \{
\begin{array}{ll}
\Int^{\q}(\wgi, \lwe_{\frac{3}{2}})&=1+\qz^{-1}\qx+\qz^{-2}\qx^2,\\
\Int^{\q}(\wgz, \lwe_{\frac{3}{2}})&=1+\qz^{-1}\qx;
\end{array}
\right.
\end{split}
\end{align*}
and
\begin{align*}
\begin{split}
\left \{
\begin{array}{ll}
\Int^{\q}(\wgi, \lwe_{-\frac{2}{1}})&=1+\qz^{-1}\qx,\\
\Int^{\q}(\wgz, \lwe_{-\frac{2}{1}})&=\qz^{-3}\qx^2.
\end{array}
\right.
\end{split}
\end{align*}
By \Cref{thm:rightint}, we have
\begin{equation*}
\Big[\frac{3}{2}\Big]^{\sharp}_{\q}=\frac{\Int^{\q}(\wgi, \lwe_{\frac{3}{2}})}{\Int^{\q}(\wgz, \lwe_{\frac{3}{2}})}\bigg|_{\q=\qz^{-1}\qx}=\frac{\q^2+\q+1}{\q+1},
\end{equation*}
and
\begin{equation*}
\Big[-\frac{2}{1}\Big]^{\sharp}_{\q}=-\frac{\qz^{-1}\Int^{\q}(\wgi, \lwe_{-\frac{2}{1}})}{\Int^{\q}(\wgz, \lwe_{-\frac{2}{1}})}\bigg|_{\q=\qz^{-1}\qx}=-\frac{\q+1}{\q^2}.
\end{equation*}
\end{example}
\subsection{Combinatorial properties via \txq-intersections}\label{sec:otherprop}
\def\di{z_{\infi}}
\def\dii{z_*}
\def\diii{z_{\zero}}
Next, we give some topological explanation of some properties in \cite{MGO20} via \txq-intersections. We draw $\ug_{\zero}$ and $\ug_{\infi}$ as foliations in the branched double cover $\bc$, which intersect $C_{\zero}$ and $C_{\infi}$ only once respectively (see Figure \ref{fig:fundom}). We have the following corollary.
\begin{figure}[ht]\centering
\begin{tikzpicture}[scale=.5]
\draw[very thick](0,0)--(6,0)--(6,6)--(0,6)--(0,0);
\draw[cyan, very thick](0,6)to[out=-20,in=200](6,6)to[out=250,in=110](6,0)to[out=160,in=20](0,0)to[out=70,in=290](0,6);
\draw (3,0)node[below]{$\di$} (0,3)node[left]{$\diii$}(3,6)node[above]{$\di$}(6,3)node[right]{$\diii$}(3.5,3)node[below]{$\dii$};
\draw[red, very thick] (3,0)--(3,6) (0,3)--(6,3);
\draw (3.5,0.5)node[above]{$\ug_{\infi}$} (0.5,3.5)node[right]{$\ug_{\zero}$}(3.5,5.5)node[below]{$\ug_{\infi}$}(5.5,3.5)node[left]{$\ug_{\zero}$};
\draw (0,0)node{$\bullet$} (6,0)node{$\bullet$}(0,6)node{$\bullet$}(6,6)node{$\bullet$};
\draw[white] (3,0)node{$\bullet$} (6,3)node{$\bullet$}(0,3)node{$\bullet$}(3,6)node{$\bullet$} (3,3)node{$\bullet$};
\draw[red](3,0)\ww (0,3)\ww(3,6)\ww(6,3)\ww(3,3)\ww;
	\end{tikzpicture}
\caption{The branched double cover $\bc$ of $\disk$ and its foliations}
\label{fig:fundom}
\end{figure}

\begin{corollary}[{\cite{MGO20}}]\label{cor:comp}
For any rational number $\frac{r}{s}\in\upQ$, its right and left \txq-deformations satisfy the following properties.
\begin{description}
\item[Positivity] The polynomials \emph{$\qr^{\sharp}(r/s), \qs^{\sharp}(r/s), \qr^{\flat}(r/s)$} and  \emph{$\qs^{\flat}(r/s)$} have positive integer coefficients.
\item[Specialization] If we take $\q=1$, we have
\emph{
\begin{align}
\begin{split}
\left \{
\begin{array}{ll}
\qr^{\sharp}(r/s)|_{\q=1}&=\qr^{\flat}(r/s)|_{\q=1}=r,\\
\qs^{\sharp}(r/s)|_{\q=1}&=\qs^{\flat}(r/s)|_{\q=1}=s.
\end{array}
\right.
\end{split}
\end{align}}
\end{description}
\end{corollary}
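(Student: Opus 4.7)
The plan is to derive both properties directly from the topological formulas in \Cref{thm:leftint} and \Cref{thm:rightint}, with the geometry doing essentially all of the work.

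\textbf{Positivity.} By these two theorems, each of $\qr^{\sharp}(r/s), \qs^{\sharp}(r/s), \qr^{\flat}(r/s), \qs^{\flat}(r/s)$ is obtained, up to an explicit monomial factor $\qz^{\pm 1}$ and clearing the lowest power of $\q$, by specializing a $\q$-intersection at $\q = \qz^{-1}\qx$. The defining formula \eqref{eq:q-int}--\eqref{eq:q-int2} presents the raw $\q$-intersection as $\sum_{\ii,\sii} c_{\ii,\sii}\,\qz^{\ii}\qx^{\sii}$ (possibly weighted by the positive factor $1+\qz^{-1}\qx$) with $c_{\ii,\sii}\in\ZZ_{\geq 0}$, since the coefficients are literally geometric intersection counts. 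Substituting $\qx = \q\qz$ turns each monomial into $c_{\ii,\sii}\,\qz^{\ii+\sii}\,\q^{\sii}$. I would verify by induction on the Farey decomposition $\rat=\pq\oplus\rsi$, using the recursion extracted in the proofs of \Cref{thm:leftint} and \Cref{thm:rightint} (the prefactors $\qz^{\pm(l+1)}\qx^{\mp(l+1)}$ collapse under the substitution to $\q^{\mp(l+1)}$), that the total exponent $\ii+\sii$ is constant across all monomials of a fixed $\q$-intersection. Thus the $\q$-intersection, after pulling out the appropriate $\qz$-power, is a Laurent polynomial in $\q$ with non-negative integer coefficients; clearing $\q$-denominators yields the desired polynomials with non-negative coefficients.

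\textbf{Specialization.} Setting $\qz=\qx=1$ forces $\q=1$. The observation following \eqref{eq:q-int} reduces the closed-arc $\q$-intersection to $2\Int_{\surfo}=\Int_{\bc}$, and \eqref{eq:q-int2} reduces the open-closed version to $\Int_{\surfoi}$. For the left side, \Cref{cor:biCA} identifies the lift of $\ue_{\rat}$ to $\bc$ with the simple closed curve $C_{\rat}$ of homology class $(r,s)$ in $H_1(\bc)=\ZZ[C_{\infi}]\oplus\ZZ[C_{\zero}]$; the classical torus intersection pairing then gives $\Int_{\bc}(C_{\rat},C_{\zero})=r$ and $\Int_{\bc}(C_{\rat},C_{\infi})=s$, so \Cref{thm:leftint} at $\q=1$ yields $\qr^{\flat}(r/s)|_{\q=1}=r$ and $\qs^{\flat}(r/s)|_{\q=1}=s$. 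For the right side, I would induct on the Farey decomposition: the smoothing identity $\ue_{\rat}=\ue_{\pq}\wedge\ue_{\rsi}$ shows that transversal interior intersections with the open arc $\ug_{\infi}$ (respectively $\ug_{\zero}$) are additive, giving $\Int_{\disk}(\ug_{\infi},\ue_{\rat})=p+u=r$ and $\Int_{\disk}(\ug_{\zero},\ue_{\rat})=q+v=s$, which matches \Cref{thm:rightint} at $\q=1$.

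The main subtlety is the bi-index collapse used in the positivity step. While the coefficients of the $\q$-intersection are manifestly non-negative, the substitution $\q=\qz^{-1}\qx$ a priori produces a two-variable Laurent polynomial, and one must confirm that every monomial $\qz^{\ii}\qx^{\sii}$ appearing has $\ii+\sii$ independent of the intersection point. This is precisely the rigidity supplied by \Cref{lem:ind}, which pins down $\dind_{z}(\lwp,\lwqq)=l(1-\XX)$ at the common decoration; combined with the additivity of bi-indices under the smoothing $\ue_{\rat}=\ue_{\pq}\wedge\ue_{\rsi}$, an induction on the Farey tree then propagates the collapse from the initial arcs $\lwez,\lwei$ to all $\lwq$, completing the argument.
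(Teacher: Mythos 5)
Your proposal is correct and follows essentially the same route as the paper: positivity is read off from \Cref{thm:leftint} and \Cref{thm:rightint} together with the non-negativity of the geometric intersection counts in \eqref{eq:q-int}--\eqref{eq:q-int2}, and specialization comes from passing to the branched double cover $\bc$, where $\ue_{\rat}$ lifts to the slope-$\rat$ curve $C_{\rat}$ of homology class $(r,s)$ so that the $\q=1$ intersections with $C_{\zero}$, $C_{\infi}$ (resp.\ the open arcs) are $r$ and $s$. Your extra remarks on the bi-index collapse are sound but already guaranteed by the statements of \Cref{thm:leftint} and \Cref{thm:rightint} (it is established inside their proofs via \Cref{lem:ind}), and your inductive treatment of the open-arc intersections is a harmless variant of the paper's foliation picture.
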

\begin{proof}
The positivity follows from Theorem \ref{thm:rightint}, Theorem \ref{thm:leftint} and that intersection numbers are all positive.

For specialization, we consider the branched double covering $\bc$ of $\disk$. Then the closed arc $\ue_{\frac{r}{s}}$ becomes the simple closed curve $C_{\frac{r}{s}}$, whose preimage under $\widetilde{\pi}$ is a line with slope $\frac{r}{s}$, on $\bc$ through these corresponding decorations which are endpoints of $\ue_{\frac{r}{s}}$. When we take $\qz=\qx=1,$ the $\q$-intersection degenerates to usual intersection. By construction above, we have
\begin{align*}
\Int_{\disk}(\ue_{\frac{r}{s}},\ue_{\zero})=\frac{1}{2}\cdot\Int_{\bc}(C_{\frac{r}{s}},C_{\zero})=r,&\quad \Int_{\disk}(\ue_{\frac{r}{s}},\ue_{\infi})=\frac{1}{2}\cdot\Int_{\bc}(C_{\frac{r}{s}},C_{\infi})=s;\\
\Int_{\disk}(\ug_{\infi},\ue_{\frac{r}{s}})=\frac{1}{2}\cdot\Int_{\bc}(\ug_{\infi},C_{\frac{r}{s}})=r,&\quad \Int_{\disk}(\ug_{\zero},\ue_{\frac{r}{s}})=\frac{1}{2}\cdot\Int_{\bc}(\ug_{\zero},C_{\frac{r}{s}})=s.
\end{align*}
Therefore, the results follows from Theorem \ref{thm:leftint} and \Cref{thm:rightint}.
\end{proof}
\begin{example}
We notice that in the example of $\frac{3}{s}$, $\ue_{\frac{3}{2}}$ hits $\ug_{\infi}$ three times and hits $\ug_{\zero}$ twice in $\surfo$, which implies that $\qr^{\sharp}(3/2)|_{\q=1}=3$ and  $\qs^{\sharp}(3/2)|_{\q=1}=2$.
\end{example}

\section{Categorification}\label{sec:cat}
\subsection{Ginzburg algebra and derived categories}
\begin{definition}[{\cite{K09,IQ1}}]\label{def:X-Gin}
Let $Q=(Q_0,Q_1)$ be a finite quiver with vertices set $Q_0=\{1,2,\ldots,n\}$ and arrows set $Q_1$. The \emph{Ginzburg Calabi-Yau-$\XX$ ddg algebra} $\Gamma_{\XX}Q:=(\k \overline{Q},d)$ is defined as follows. We define a $\ZZ\oplus\ZZ\XX$-graded quiver $\overline{Q}$ with the same vertices set as $Q_0$ and the following arrows:
\begin{itemize}
\item original arrows $a:i\to j\in Q_1$ with degree 0;
\item opposite arrows $a^*:j\to i\in Q_1$ associated to $a\in Q_1$ with degree $2-\XX$;
\item a loop $e_i^*$ for each $i\in Q_0$ with degree $1-\XX$, where $e_i$ is the idempotent at $i$.
\end{itemize}
Let $\k\overline{Q}$ be a $\ZZ\oplus\ZZ\XX$-graded path algebra of $\overline{Q}$, and define a differential $d:\k\overline{Q}\to\k\overline{Q}$ of degree 1 by
\begin{itemize}
\item $da=da^*=0$ for $a\in Q_1$;
\item $de^*_i=e_i\big(\sum_{a\in Q_1}(aa^*-a^*a)\big)e_i$.
\end{itemize}
\end{definition}
We denote by $\D_{\XX}(Q):=\pvd\Gamma_{\XX}Q$ the perfect value derived category of $\Gamma_{\XX}Q$, which is the same as the finite-dimensional derived category of $\Gamma_{\XX}Q$.
We consider the $A_2$ case, where $A_2$ is a quiver with vertices set $\{1,2\}$ and an arrow $1\to 2$, and the corresponding category is denoted by $\cat{\XX}$.
\subsection{Rational case via spherical objects}
In this subsection, we aim to find spherical objects in some certain category which correspond to rational numbers and represent their hom space via right and left \txq-deformations. We particularly consider the case of the Calabi-Yau-$\XX$ category of the $A_2$ quiver. Recall that a triangulated category $\D$ is called \emph{Calabi-Yau-$\XX$} if for any objects $L, M$ in $\D$, we have a natural isomorphism
$$\Hom_{\D}(L,M)\cong D\Hom_{\D}(M, L[\XX]),$$
where $D=\Hom(-,\k)$ is the dual functor and $\k$ is an algebraically closed field. In particular, $\cat{\XX}$ is a Calabi-Yau-$\XX$ category with a distinguish auto-equivalence
$$\XX: \cat{\XX}\to\cat{\XX}.$$
\begin{definition}[{\cite{IQZ}}]
For any $M,N\in\D$, we define the \emph{bigraded Hom} as $$\Hom^{\ZZ^2}(M,N):=\bigoplus_{\ii,\sii\in\ZZ}\Hom_{\D}(M,N[\ii+\sii\XX]),$$
	and its \txq-dimension as
	\begin{gather}\label{eq:qHom}
	\qdH(M,N)\colon=\sum_{\ii,\sii\in\mathbb{Z}}
	\qz^{\ii}\qx^{\sii}\cdot\dim\Hom_{\D}(M,N[\ii+\sii\XX]).
	\end{gather}
	When $M=N$, $\Hom^{\ZZ^2}(M,M)$ becomes a $\ZZ^2$-graded algebra, called the Ext-algebra of $M$ and denoted by $\Ext^{\ZZ^2}(M,M)$.
\end{definition}
By definition, we directly have
\begin{equation}
\dim\Hom^{\ZZ^2}(M,N)=\qdH(M,N)\mid_{\qz=\qx=1}.
\end{equation}
\begin{definition}[{\cite{IQZ}}]
An object $S$ is called \emph{$\XX$-spherical} if $\Hom^{\bullet}(S,S)=\k\oplus\k[-\XX]$.
\end{definition}
For any spherical object $S$ in an Calabi-Yau-$\XX$ category $\D$, there is an associated auto-equivalence, namely the \emph{twist functor} $\phi_S:\D\rightarrow\D$, defined by
$$\phi_S(X)=\Cone(S\otimes\Hom^{\bullet}(S,X)\rightarrow X)$$
with inverse
$$\phi_S^{-1}(X)=\Cone(X\rightarrow S\otimes\Hom^{-\bullet}(X,S))[-1].$$
By \cite[Lemma 2.11]{ST}, we have the formula
\begin{equation}
\phi_{\psi(M)}=\psi\circ\phi_M\circ\psi^{-1}
\end{equation}
for any spherical object $M$ and any automorphism $\psi$ in $\Aut\D$. We define $\qSph(\DGA)$ to be the set of all spherical objects in $\cat{\XX}$ which are simples in some hearts (cf. \cite[Section 10]{KQ}). Let
\[\Sph(\DGA):=\qSph(\DGA)/\<[1],[\XX]\>.\]
We define $\ST(A_2)$ to be the subgroup of $\Aut\cat{\XX}$ generated by $\phi_{S}$ for any $S\in\qSph(\DGA)$.

Let $\disk$ be the three decorated disk as before.
There are reachable spherical objects up to shifts $[1]$ and $\XX$ in $\cat{\XX}$
corresponding to rational numbers. We have a categorification of \Cref{cor:biCA} as follows.
\begin{proposition}[{\cite[\S 4]{IQZ}}]\label{prop:biject}
There is a bijection $X$ and an isomorphism $\iota$ which fits into the following:
\begin{equation}
\begin{tikzpicture}[xscale=.6,yscale=.6]
\draw(180:3)node(o){$\wXCA(\disk)$}(-3,2.2)node(b){\small{$\BT(\disk)$}}
(0,2.5)node{$\iota$}(0,.5)node{$X$};
\draw(0:3)node(a){$\qSph(\DGA)$}(3,2.2)node(s){\small{$\ST(A_2)$}};
\draw[->,>=stealth](o)to(a);\draw[->,>=stealth](b)to(s);
\draw[->,>=stealth](-3.2,.6).. controls +(135:2) and +(45:2) ..(-3+.2,.6);
\draw[->,>=stealth](3-.2,.6).. controls +(135:2) and +(45:2) ..(3+.2,.6);
\end{tikzpicture}
\end{equation}
sending $\lwe_{\pm\frac{r}{s}}$ to $X_{\pm\frac{r}{s}}$ and $B_{\ue_{\pm\frac{r}{s}}}$ to $\phi_{X_{\pm\frac{r}{s}}}$, satisfying
\begin{equation}
X_{B_{\ue_{\frac{u}{v}}}(\lwp)}=\phi_{X_{\frac{u}{v}}}(X_{\frac{p}{q}}),
\end{equation}
and
\begin{equation}
X_{B_{\ue_{-\frac{p}{q}}}(\lwe_{-\frac{u}{v}})}=\phi_{X_{-\frac{p}{q}}}(X_{-\frac{u}{v}})
\end{equation}
for $\frac{r}{s}\in\upQ$.
Hence, \eqref{eq:ind} translates to the triangle
\begin{equation}\label{eq:trian}
\begin{tikzcd}
\Xp \ar[r] & \Xqs\ar[r]&\Xqq[(l+1)(1-\XX)]\ar[r]& \Xp[1],
\end{tikzcd}
\end{equation}
where $l=l(\frac{p}{q}\oplus\frac{u}{v})$ is the integer in \Cref{set}.
\end{proposition}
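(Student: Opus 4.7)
The strategy is to bootstrap from the general Khovanov--Seidel/IQZ dictionary between bigraded arcs on a decorated surface and spherical objects of the associated Ginzburg dg category, specialize it to $\surfo=\disk$, and read the remaining triangle off Calabi--Yau duality together with \Cref{lem:ind}.

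First, I identify the two reference bigraded arcs $\lwez$ and $\lwei$ with the two standard simple spherical objects $\Xze,\Xin$ of $\cat{\XX}$ (the simples of the canonical $A_2$-heart). By \Cref{lem:CA} every arc in $\CA(\disk)$ is of the form $b(\ue_{\infi})$ for some $b\in\BT(\disk)$, so the braid-conjugation formula \eqref{eq:bt} and its categorical twin $\phi_{\psi(M)}=\psi\circ\phi_M\circ\psi^{-1}$ let me set $X_{b(\lwei)}:=\iota(b)(\Xin)$ on the full orbit. Well-definedness and bijectivity reduce to the assertion that $\iota:\BT(\disk)\to\ST(A_2)$ is a group isomorphism modulo the shifts $\<[1],[\XX]\>$, which is supplied by \cite[\S 4]{IQZ}. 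The two equivariance identities $X_{B_{\ue_{u/v}}(\lwp)}=\phi_{\Xqq}(\Xp)$ and its negative analogue then follow at once from this equivariance and the inductive Farey definition of $\lwq$.

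For the triangle \eqref{eq:trian}, \Cref{lem:ind} gives $\dind_z(\lwp,\lwqq)=l(1-\XX)$ with $z$ the unique intersection between $\lwp$ and $\lwqq$. The standard dictionary $\qdH(-,-)=\qqInt(-,-)$ between $\q$-dimensions of bigraded Hom spaces and $\q$-intersections (cf.\ \cite{KS,IQZ}) then forces $\Hom^{\ZZ^2}(\Xp,\Xqq)$ to be one-dimensional, concentrated in bidegree $l-l\XX$. Calabi--Yau-$\XX$ duality $\Hom(A,B)\cong D\Hom(B,A[\XX])$ places the dual generator in $\Hom(\Xqq,\Xp[(l+1)\XX-l])$, i.e.\ the unique (up to scalar) map is $\Xqq[l-(l+1)\XX]\to\Xp$. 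Feeding this into the defining triangle of the spherical twist $\phi_{\Xqq}$ yields
\[\Xqq[l-(l+1)\XX]\to\Xp\to\phi_{\Xqq}(\Xp)\to\Xqq[l-(l+1)\XX+1].\]
Substituting $\phi_{\Xqq}(\Xp)=\Xq$ via the equivariance above and simplifying $l+1-(l+1)\XX=(l+1)(1-\XX)$ gives \eqref{eq:trian} after one rotation.

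The main obstacle is the well-definedness of $X$ on the full orbit -- equivalently, that the stabilizer of $\lwei$ in $\BT(\disk)$ matches the stabilizer of $\Xin$ in $\ST(A_2)$ modulo $\<[1],[\XX]\>$ -- for which we genuinely rely on \cite{IQZ}. Once that is granted, the triangle itself is a short bi-grading bookkeeping that I have streamlined above by first reducing everything to the single bidegree $l(1-\XX)$ provided by \Cref{lem:ind}.
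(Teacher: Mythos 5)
Your proposal is correct and follows essentially the same route as the paper: the bijection and the isomorphism $\iota$ are imported from \cite{IQZ}, the objects are built by iterated spherical twists starting from $\Xze,\Xin$, and the triangle \eqref{eq:trian} is the defining cone of $\phi_{\Xqq}$ combined with Calabi--Yau-$\XX$ duality. The only (harmless) difference is in how the shift $(l+1)(1-\XX)$ is pinned down: you read the one-dimensional $\Hom^{\ZZ^2}(\Xp,\Xqq)$ concentrated in bidegree $l-l\XX$ off \Cref{lem:ind} via the intersection--Hom dictionary of \Cref{thm:intform}, whereas the paper propagates the same degree data by induction on the triangles themselves using the duals of the maps in the previous triangle; your degree bookkeeping checks out.
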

In fact, when we draw $\{X_{\frac{r}{s}}\}_{\frac{r}{s}\in\upQ}$ on the weighted Farey graph in the right one in Figure \ref{cat}, $\qd^l, l\geq 0$ represents that there is a morphism of degree $l(1-\XX)$ between the connected spherical objects.
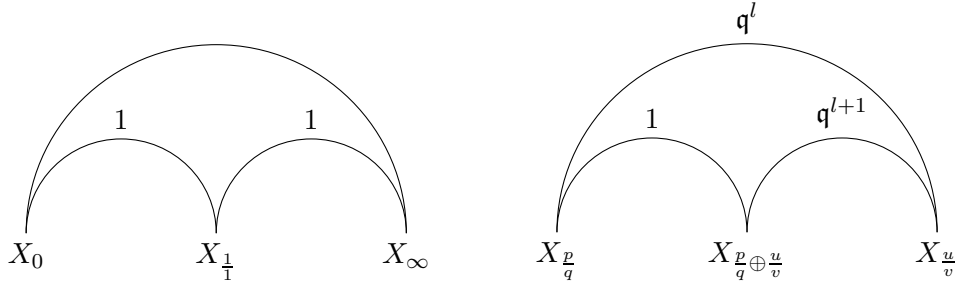
\begin{figure}
\begin{center}
\begin{tikzpicture}[scale=5]
    \draw (1,0) arc (0:180:.5);
    \draw  (0,0)  node[below]{$\displaystyle \Xze$};
    \draw  (1,0)  node[below]{$\displaystyle \Xin$};
    \draw (1,0) arc (0:180:.25);
    \draw  (.5,0) node[below]{$\displaystyle \Xon$};
    \draw (.5,0) arc (0:180:.25);
\draw (.25,.25) node[above]{$1$};
\draw (.75,.25) node[above]{$1$};
\end{tikzpicture}\hspace{1cm}
\begin{tikzpicture}[scale=5]
   \draw (1,0) arc (0:180:.5);
    \draw  (0,0)  node[below]{$\displaystyle \Xp$};
    \draw  (1,0)  node[below]{$\displaystyle \Xqq$};
    \draw (1,0) arc (0:180:.25);
    \draw  (.5,0) node[below]{$\displaystyle \Xqs$};
    \draw (.5,0) arc (0:180:.25);
\draw (.25,.25) node[above]{$1$};
\draw (.75,.25) node[above]{$\qd^{l+1}$};
\draw (.5,.5) node[above]{$\qd^{l}$};
\end{tikzpicture}
\end{center}
\caption{The categorification}
    \label{cat}
\end{figure}

Here is the iterative construction of \Cref{prop:biject}. Let $\Xze, \Xin$ be two spherical objects which are simples in some canonical heart with $\Ext^1(\Xin,\Xze)\neq 0$. Let $\Xon=\phi_{\Xin}(\Xze)$, and we deduce that $\Xon$ is also a spherical object. We have a triangle
$$\begin{tikzcd}
\Xze \ar[r] & \Xon\ar[r]&\Xin\ar[r]& \Xze[1]
\end{tikzcd}$$
by construction. We assume that the triangle in \eqref{eq:trian} holds for $l$ and consider the $l+1$ case. By Calabi-Yau-$\XX$ duality, we have non-zero morphisms $\Xqs\rightarrow \Xp[\XX]$ and $X_{\frac{u}{v}}\rightarrow \Xqs[(l+1)\XX-l]$. Hence we can extend them to triangles:
$$\begin{tikzcd}\Xp\ar[r]&Y\ar[r]&\Xqs[1-\XX]\ar[r]&\Xp[1]\end{tikzcd}$$
and
$$\begin{tikzcd}\Xqs\ar[r]&Y'\ar[r]&X_{\frac{u}{v}}[(l+1)(1-\XX)]\ar[r]&\Xqs[1],\end{tikzcd}$$
where these new spherical objects $Y=\phi_{\Xqs[1-\XX]}(\Xp)=\phi_{\Xqs}(\Xp)$ and $Y'=\phi_{X_{\frac{u}{v}}[(l+1)(1-\XX)]}(\Xqs)=\phi_{X_{\frac{u}{v}}}(\Xqs)$. Thus we construct the spherical objects associated to non negative rational numbers and the negative case is similar.
\begin{theorem}[{\cite[Lemma 3.4, Proposition 4.6]{IQZ}}]\label{thm:intform}
For any $\lwe,\lwe'\in\wXCA(\disk)$ satisfying $\Int_{\diski}(\lwe,\lwe')=0$, and $\wg_i\in\A$, we have
\begin{gather}
  \qdH(P_i,X_{\lwe})=\qqInt(\wg_i,\lwe),
\end{gather}
and
\begin{gather}\qdH(X_{\lwe},X_{\lwe'})=\qqInt(\lwe,\lwe'),\end{gather}
where $P_i$ is the indecomposable projective module corresponding to $\wg_i$.
\end{theorem}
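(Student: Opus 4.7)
The plan is to prove both identities simultaneously by induction on the complexity of $\lwe$ (and $\lwe'$), measured either by the length of the minimal braid word realizing the corresponding element of $\BT(\disk)$ acting on a fixed initial arc, or equivalently by the depth of nested Farey sums needed to reach the associated rational number from $\{0,\infty\}$. The guiding principle is that \Cref{prop:biject} translates the topological recursion $\lwqs=\lwp\wedge\lwqq$ into the exact triangle \eqref{eq:trian} in $\cat{\XX}$, so the same combinatorial data drive the inductive step on the categorical and the topological side.

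\textbf{Base case.} For the initial bigraded arcs $\{\lwez,\lwei\}$ and the duals $\{\wgz,\wgi\}$, one computes directly. The indecomposable projective $P_i$ pairs with the spherical simples via $\Hom^{\bullet}(P_i,X_{\eta_j})=\k\cdot\delta_{ij}$ concentrated in degree $0$, which matches $\qqInt(\wg_i,\eta_j)$ since by \eqref{eq:dual} the only intersection is a single transverse interior crossing of bi-index $0$. For the second formula, sphericity gives $\Ext^{\ZZ^2}(\Xze,\Xze)=\k\oplus\k[-\XX]$, while $\Ext^{\ZZ^2}(\Xze,\Xin)$ is concentrated in the bi-degrees dictated by the unique decoration intersection with bi-index $\dind_{\decs}(\lwez,\lwei)$ from \eqref{eq:indintitial}, which matches $\qqInt(\lwez,\lwei)$ directly.

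\textbf{Inductive step.} Apply $\Hom^{\bullet}(-,X_{\lwe'})$ (respectively $\Hom^{\bullet}(P_i,-)$) to the triangle \eqref{eq:trian} and read off the long exact sequence of bigraded Hom-groups. Under the hypothesis $\Int_{\diski}(\lwe,\lwe')=0$ this sequence collapses into short exact sequences in each bi-degree, so $\qdH$ is additive along the triangle and absorbs the weight $\qz^{l+1}\qx^{-l-1}$ coming from the shift $[(l+1)(1-\XX)]$. The same recursion is already visible on the topological side in the proof of \Cref{thm:leftint}: the intersections of $\lwqs$ with any fixed arc split into those inherited from $\lwp$ and those induced from $\lwqq$ with the very same bi-index correction $(l+1)(1-\XX)$, so both sides satisfy identical recursions with identical base cases.

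\textbf{Main obstacle.} The delicate point is showing that no cancellation occurs in the long exact sequence; equivalently, that every connecting morphism $\Hom^{\bullet}(X_{u/v}[(l+1)(1-\XX)],X_{\lwe'})\to\Hom^{\bullet+1}(X_{p/q},X_{\lwe'})$ vanishes. I expect this to follow from $\Int_{\diski}(\lwe,\lwe')=0$ via Calabi-Yau-$\XX$ duality, since any nontrivial connecting map would categorically produce an extension whose topological avatar is a forbidden interior intersection. Making this precise without circularity requires carefully tracking the bi-indices using \Cref{lem:biindsum}, \Cref{lem:bi3ind} and \Cref{lem:3intindec}, and verifying that the contributions at the possibly shared decoration(s) fit together with the CY-$\XX$ pairing; this bookkeeping is the technical heart of the argument.
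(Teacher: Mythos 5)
First, a point of comparison: the paper does not prove this statement at all --- it is imported wholesale from \cite[Lemma~3.4, Proposition~4.6]{IQZ}, where it is established by direct computation with the dg modules over the (graded) gentle/Ginzburg algebra rather than by dévissage along triangles (cf.\ also \cite{QZ} for the CY-$N$ prototype). So your proposal is an attempt at a genuinely different argument, and it has to be judged on its own; as written it has two real gaps.

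The first gap is structural: your induction on $\lwe$ with $\lwe'$ fixed does not close, because the hypothesis $\Int_{\diski}(\lwe,\lwe')=0$ is not inherited by the Farey pieces. Concretely, take $\lwe=\lwe_{\frac{1}{1}}$ and $\lwe'=\lwe_{\frac{3}{4}}$: these are Farey neighbours, so $\Int_{\diski}(\lwe,\lwe')=0$ and the theorem applies to the pair, but the decomposition $\lwe_{\frac{1}{1}}=\lwe_{\zero}\wedge\lwe_{\infi}$ produces the piece $\ue_{\zero}$, which crosses $\ue_{\frac{3}{4}}$ in the interior ($\Int_{\Tri}+2\Int_{\diski}=|0\cdot 4-1\cdot 3|=3$ with one shared decoration, so $\Int_{\diski}=1$). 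Hence the inductive hypothesis is unavailable for $(\lwe_{\zero},\lwe')$, and you would first need to prove the formula \emph{without} the interior-intersection hypothesis for the pieces --- i.e.\ essentially the general statement --- or redesign the induction to run on both arcs simultaneously. The second gap is the one you flag yourself: the vanishing of the connecting morphisms (equivalently, the absence of cancellation so that $\qdH$ is additive, not merely Euler-characteristic--additive, along \eqref{eq:trian}). This is not a technical afterthought; it is precisely the difference between the Hom-level statement being proved and its decategorified shadow in the Grothendieck group (\Cref{prop:Gro}), and the appeal to ``a nontrivial connecting map would produce a forbidden interior intersection'' presupposes the very dictionary between Homs and intersections that is being established. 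A correct degree-bookkeeping argument (showing all relevant Homs sit on the line $\{k(1-\XX)\}$ and its CY-$\XX$ dual, so that source and target of each connecting map never share a bidegree) may well exist, but it is the substance of the proof and is not supplied.
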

By \Cref{thm:leftint}, \Cref{thm:rightint} and \Cref{thm:intform}, we have the following corollaries directly.
\begin{corollary}\label{cor:lefthom}
For any rational number $\frac{r}{s}\in\QQ\setminus\{0\}$, the fraction
\begin{equation}\label{eq:lefthom}
\frac{\varepsilon\qdH(\Xq,\Xze)}{\qdH(\Xq,\Xin)}\bigg|_{\q=\qz^{-1}\qx}
\end{equation}
corresponds to the left \txq-deformation of $\rs$, where
\begin{equation*}
\varepsilon=
\begin{cases}
\qz^{-1},&\frac{r}{s}\geq 0,\\
-1,&\frac{r}{s}<0.
\end{cases}
\end{equation*}
\end{corollary}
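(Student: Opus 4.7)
The plan is a formal substitution combining \Cref{thm:leftint}, \Cref{prop:biject}, and \Cref{thm:intform}, as indicated by the sentence preceding the corollary. First, I would apply \Cref{prop:biject} to identify bigraded arcs with spherical objects via $\lwez \mapsto \Xze$, $\lwei \mapsto \Xin$, and $\lwq \mapsto \Xq$, compatibly with the shifts $[1]$ and $[\XX]$. Next, I would use \Cref{thm:intform} to convert the topological $\q$-intersections appearing on the right-hand side of \Cref{thm:leftint} into $\q$-dimensions of Hom spaces:
\[
\Int^\q(\lwq, \lwez) = \qqInt(\lwq, \lwez) = \qdH(\Xq, \Xze), \qquad \Int^\q(\lwq, \lwei) = \qqInt(\lwq, \lwei) = \qdH(\Xq, \Xin).
\]
Plugging these identities into the formula
\[
[r/s]^\flat_\q = \frac{\varepsilon \, \Int^\q(\lwq, \lwez)}{\Int^\q(\lwq, \lwei)}\bigg|_{\q = \qz^{-1}\qx}
\]
of \Cref{thm:leftint} produces \eqref{eq:lefthom} with the same sign-dependent $\varepsilon$, so the corollary reduces to verifying that the interface between these three results is legitimate.

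The only nontrivial verification is the hypothesis $\Int_{\diski}(\lwe, \lwe') = 0$ of \Cref{thm:intform} for the pairs $(\lwq, \lwez)$ and $(\lwq, \lwei)$, and this is the main technical point. I would address it by induction along the Farey decomposition $r/s = p/q \oplus u/v$ of \Cref{set}: the base cases $r/s \in \{0, 1, \infty\}$ are visible from \Cref{fig:closedarc}, and in the inductive step the smoothing-out $\lwqs = \lwp \wedge \lwqq$ does not create additional interior intersections with the reference arcs $\lwez$ and $\lwei$ beyond those inherited from $\lwp$ and $\lwqq$, matching the intersection-inheritance analysis carried out in the proof of \Cref{thm:leftint}. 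Should this condition genuinely fail for some $r/s$, one falls back on the more general CY-$\XX$ form of the Hom--intersection formula from \cite{IQZ}, in which the factor $(1+\qz^{-1}\qx)$ in the definition of $\qqInt$ already encodes the Calabi-Yau-$\XX$ duality and therefore absorbs any interior crossings. Either way, once this topological check is in place, no further computation is required, and the negative-sign case $r/s < 0$ is handled by the same substitution using the $\varepsilon = -1$ branch of \Cref{thm:leftint}.
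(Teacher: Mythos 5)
Your overall route is the paper's: the corollary is obtained by substituting the identifications of \Cref{prop:biject} and the Hom--intersection formula of \Cref{thm:intform} into \Cref{thm:leftint}, and the paper itself offers nothing beyond this one-line combination. You are also right that the only point needing care is the hypothesis $\Int_{\diski}(\lwe,\lwe')=0$ in \Cref{thm:intform}. However, your primary plan for discharging it --- an induction showing that $\lwq$ never meets $\lwez$ or $\lwei$ in the interior --- cannot work, because these arcs do have interior intersections in general. The paper's own computation $\Int^{\q}(\lwe_{\frac{3}{2}},\lwez)=\qz+\qx+\qz^{3}\qx^{-2}=\qz(1+\qz^{-1}\qx)+\qz^{3}\qx^{-2}$ already exhibits the factor $(1+\qz^{-1}\qx)$ that \eqref{eq:q-int} attaches to an interior crossing; equivalently, on the double cover the line of slope $3/2$ crosses the horizontal lines at non-branch points. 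So only your fallback is viable: one must invoke the version of the Hom $=$ $\q$-intersection formula from \cite{IQZ} that holds without the disjointness hypothesis, in which the $(1+\qz^{-1}\qx)$ factor accounts for the two CY-$\XX$-dual Hom contributions of each interior point. That is implicitly what the paper relies on (it states \Cref{thm:intform} more restrictively than it uses it); with that replacement your argument is complete and coincides with the paper's.
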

\begin{corollary}\label{cor:righthom}
For any rational number $\frac{r}{s}\in\uQ$, the fraction
\begin{equation}\label{eq:righthom}
\frac{\varepsilon\qdH(P_{\infi},\Xq)}{\qdH(P_{\zero},\Xq)}\bigg|_{\q=\qz^{-1}\qx}
\end{equation}
corresponds to the right \txq-deformation of $\rs$, where $P_{\zero}$ and $P_{\infi}$ are the corresponding indecomposable projectives satisfying that $\qdH(P_i,X_j)=\delta_{i,j}, i,j\in\{0,\infty\}$. Here
\begin{equation*}
\varepsilon=
\begin{cases}
1,&\frac{r}{s}\geq0,\\
-\qz^{-1},&\frac{r}{s}<0.
\end{cases}
\end{equation*}
\end{corollary}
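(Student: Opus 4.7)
The plan is to observe that this corollary is a direct translation of \Cref{thm:rightint} through the categorification dictionary, using \Cref{thm:intform} as the bridge. Concretely, under the bijection of \Cref{prop:biject}, the bigraded simple closed arc $\lwq$ corresponds to the spherical object $\Xq$, and the open arcs $\wgz$ and $\wgi$ from the arc system $\A$ in \Cref{thm:rightint} are exactly the arcs dual (in the sense of \Cref{def:f.f.a.s.+dual.a.s.}) to $\lwez$ and $\lwei$; these dual arcs correspond to the indecomposable projectives $P_{\zero}$ and $P_{\infi}$ characterized by the biorthogonality $\qdH(P_i,X_j)=\delta_{ij}$ for $i,j\in\{0,\infty\}$.

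First I would make this identification precise: by the normalization $\dind(\wgi,\lwei)=\dind(\wgz,\lwez)=0$ fixed in \Cref{sec:right} and equation \eqref{eq:dual}, the projective objects $P_{\zero}=X_{\wgz}$ and $P_{\infi}=X_{\wgi}$ satisfy the orthogonality condition stated in the corollary. Next, I would apply the first equation of \Cref{thm:intform} to rewrite
\[
\qdH(P_{\zero},\Xq)=\qqInt(\wgz,\lwq),\qquad \qdH(P_{\infi},\Xq)=\qqInt(\wgi,\lwq).
\]
Substituting these two equalities into the fraction \eqref{eq:righthom} and invoking \Cref{thm:rightint} yields the desired identification with $[\rat]^{\sharp}_{\q}$, with the same sign factor $\varepsilon$.

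There is essentially no obstacle to the proof; it is a purely formal combination of two already-established statements. The only minor point of care is to verify that the open arcs $\wgz,\wgi$ from \Cref{sec:right} really do play the role of the indecomposable projectives in the derived category $\cat{\XX}$, but this is exactly the content of the projective-interpretation in \Cref{thm:intform} together with the matching of gradings imposed by \eqref{eq:dual}. Once this identification is noted, the corollary follows verbatim from \Cref{thm:rightint}.
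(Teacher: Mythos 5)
Your proposal matches the paper exactly: the authors state that the corollary follows directly from \Cref{thm:rightint} together with \Cref{thm:intform}, which is precisely the identification $\qdH(P_?,\Xq)=\qqInt(\wg_?,\lwq)$ you spell out. Your additional care in checking that $\wgz,\wgi$ realize the projectives via the grading normalization \eqref{eq:dual} is a correct and slightly more explicit version of what the paper leaves implicit.
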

\section{Applications}
\subsection{Reduction to single grading as foliations}
Let $N\geq 2$ be an integer. We collapse the double grading $\Lambda$ on $\surfo$ to single grading $\lambda$, which is a line field (or foliation) in $\PTSt$ by setting $\XX=N$.
More precisely, a double grading $(a,b)\in\ZZ\oplus\ZZ\XX$ collapses into $a+bN\in\ZZ$.
The foliations in such cases are given by quadratic differentials
\[
    (z^3+a z+b)^{N-2} \mathrm{d}z^{\otimes 2}
\]
on $\CC\PP^1$ with real blow-up at $\infty$, cf. Figure \ref{fig:folCY3} for $N=2,3,4$ and $\surfo=\disk$. Notice that the foliations come from quadratic differential (cf. \cite{I,BQS}).
Then $\qx=\qz^{N}$ and the \txq-intersection formula \eqref{eq:q-int} reduces to
\begin{equation}
	\qqInt(\lws,\lwt)=
	\displaystyle\sum_{k\in\mathbb{Z}}
	\qz^{k}\cdot\Int_{\Tri}^{k} (\lws,\lwt)
+(1+\qz^{N-1})
	\displaystyle\sum_{k\in\mathbb{Z}}
	\qz^{k}\cdot\Int_{\surfoi}^{k} (\lws,\lwt).
	\end{equation}
for $\lws,\lwt\in\wXCA(\surfo)$. Moreover, $\q=\qz^{N-1}$ in \Cref{thm:leftint} and \Cref{thm:rightint}.
When $N=2$, $\q=\qz$ and no specialization is required.
\begin{figure}[ht]\centering
\makebox[\textwidth][c]{
\begin{tikzpicture}[scale=.4,yscale=.8]
	\draw[very thick](0,0)circle(6);
 \foreach \j in {-57,-54,...,57} {
        \draw[Emerald!60, thin] plot [smooth,tension=1.7] coordinates {(0,6) (\j/10,0) (270:6)}; }
\draw[thick] plot [smooth,tension=1.7] coordinates {(0,6) (3,0) (270:6)};
\draw[thick] plot [smooth,tension=1.7] coordinates {(0,6) (-3,0) (270:6)};
\draw(0,6)node[cyan] {$\bullet$} (270:6)node[cyan] {$\bullet$} ;
 \draw(-3,0)edge[red,ultra thick](0,0);
\draw(0,0)edge[red,ultra thick](3,0);
	\draw(-3,0)\ww(0,0)\ww(3,0)\ww;
	\end{tikzpicture}
\begin{tikzpicture}[scale=.5,yscale=.8]
\draw[very thick](0,0)circle(5);
    \path (18+72:5) coordinate (v2)
          (-120:5) coordinate (v1)
          (-2.5,0) coordinate (v3)
          (0,0) coordinate (v4);
  \foreach \j in {.1, .18, .26, .34, .42, .5,.58, .66, .74, .82, .9}
    {
      \path (v3)--(v4) coordinate[pos=\j] (m0);
     \draw[Emerald!60, thin] plot [smooth,tension=.3] coordinates {(v1)(m0)(v2)};
    }
\draw[thick](v4)to(v1)to(v3)to(v2)to(v4);

    \path (18+72:5) coordinate (v2)
          (-60:5) coordinate (v1)
          (2.5,0) coordinate (v3);
  \foreach \j in {.1, .18, .26, .34, .42, .5,.58, .66, .74, .82, .9}
    {
      \path (v3)--(v4) coordinate[pos=\j] (m0);
      \draw[Emerald!60, thin] plot [smooth,tension=.3] coordinates {(v1)(m0)(v2)};
    }
\draw[thick](v4)to(v1)to(v3)to(v2)to(v4);
    \path (18+72:5) coordinate (v2)
          (18+72*2:5) coordinate (v1)
          (-2.5,0) coordinate (v3) (72+54:5) coordinate (v5);
    \path (v1)--(v2) coordinate[pos=.5] (v4);
  \foreach \j in {.2,.32,.45,.55,.68,.8, .9}
    {
      \path (v3)--(v5) coordinate[pos=\j] (m0);
      \draw[Emerald!60, thin] plot [smooth,tension=.6] coordinates {(v1)(m0)(v2)};
    }
    \path (-120:5) coordinate (v2);
    \path (v1)--(v2) coordinate[pos=.5] (v4) (72*2+54:5) coordinate (v5);
  \foreach \j in {.2,.32,.45,.55,.68,.8, .9}
    {
      \path (v3)--(v5) coordinate[pos=\j] (m0);
      \draw[Emerald!60, thin] plot [smooth,tension=.6] coordinates {(v1)(m0)(v2)};
    }
    \path (18+72:5) coordinate (v2)
          (18+72*0:5) coordinate (v1)
          (2.5,0) coordinate (v3) (58:5) coordinate (v5);
    \path (v1)--(v2) coordinate[pos=.5] (v4);
  \foreach \j in {.2,.32,.45,.55,.68,.8, .9}
    {
      \path (v3)--(v5) coordinate[pos=\j] (m0);
      \draw[Emerald!60, thin] plot [smooth,tension=.6] coordinates {(v1)(m0)(v2)};
    }
    \path (-60:5) coordinate (v2) (-22:5) coordinate (v5);
    \path (v1)--(v2) coordinate[pos=.5] (v4);
  \foreach \j in {.2,.32,.45,.55,.68,.8, .9}
    {
      \path (v3)--(v5) coordinate[pos=\j] (m0);
      \draw[Emerald!60, thin] plot [smooth,tension=.6] coordinates {(v1)(m0)(v2)};
    }
    \path (-120:5) coordinate (v2)
          (-60:5) coordinate (v1)
          (0,0) coordinate (v3) (270:5) coordinate (v5);
    \path (v1)--(v2) coordinate[pos=.5] (v4);
  \foreach \j in {.2,.3,.4,.5,.6,.7,.8,.9}
    {
      \path (v3)--(v5) coordinate[pos=\j] (m0);
      \draw[Emerald!60, thin] plot [smooth,tension=.5] coordinates {(v1)(m0)(v2)};
    }
\draw[thick](18+72*2:5)to(-2.5,0)(18:5)to(2.5,0);
\foreach \j in {1,2,5}
    {\draw[cyan](18+72*\j:5)\nn;}
\draw[cyan](-60:5)\nn(-120:5)\nn;
\draw[red,ultra thick]
  (-2.5,0)\ww to  (0,0)\ww to  (2.5,0)\ww ;
\end{tikzpicture}
\begin{tikzpicture}[scale=.5,yscale=.8]
\draw[very thick](0,0)circle(5);
    \path (-1.25,4.84) coordinate (v2)
          (-1.25,-4.84) coordinate (v1)
          (-2.5,0) coordinate (v3)
          (0,0) coordinate (v4);
  \foreach \j in {.1, .18, .26, .34, .42, .5,.58, .66, .74, .82, .9}
    {
      \path (v3)--(v4) coordinate[pos=\j] (m0);
     \draw[Emerald!60, thin] plot [smooth,tension=.3] coordinates {(v1)(m0)(v2)};
    }
\draw[thick](v4)to(v1)to(v3)to(v2)to(v4);

    \path (1.25,4.84) coordinate (v2)
          (1.25,-4.84) coordinate (v1)
          (2.5,0) coordinate (v3);
  \foreach \j in {.1, .18, .26, .34, .42, .5,.58, .66, .74, .82, .9}
    {
      \path (v3)--(v4) coordinate[pos=\j] (m0);
      \draw[Emerald!60, thin] plot [smooth,tension=.3] coordinates {(v1)(m0)(v2)};
    }
\draw[thick](v4)to(v1)to(v3)to(v2)to(v4);
    \path (-1.25,4.84) coordinate (v2)
          (-3.75,3.31) coordinate (v1)
          (-2.5,0) coordinate (v3) (-2.5,4.33) coordinate (v5);
    \path (v1)--(v2) coordinate[pos=.4] (v4);
  \foreach \j in {.2,.32,.45,.55,.68,.8, .9}
    {
      \path (v3)--(v5) coordinate[pos=\j] (m0);
      \draw[Emerald!60, thin] plot [smooth,tension=.4] coordinates {(v1)(m0)(v2)};
    }
\path (-3.75,3.31) coordinate (v2)
          (-3.75,-3.31) coordinate (v1)
          (-2.5,0) coordinate (v3) (180:5) coordinate (v5);
    \path (v1)--(v2) coordinate[pos=.5] (v4);
  \foreach \j in {.2,.32,.45,.55,.68,.8, .9}
    {
      \path (v3)--(v5) coordinate[pos=\j] (m0);
      \draw[Emerald!60, thin] plot [smooth,tension=1] coordinates {(v1)(m0)(v2)};
    }
    \path (-1.25,-4.84) coordinate (v2);
    \path (v1)--(v2) coordinate[pos=.4] (v4) (-2.5,-4.33) coordinate (v5);
  \foreach \j in {.2,.32,.45,.55,.68,.8, .9}
    {
      \path (v3)--(v5) coordinate[pos=\j] (m0);
      \draw[Emerald!60, thin] plot [smooth,tension=.4] coordinates {(v1)(m0)(v2)};
    }
    \path (1.25,4.84) coordinate (v2)
          (3.75,3.31) coordinate (v1)
          (2.5,0) coordinate (v3) (2.5,4.33) coordinate (v5);
    \path (v1)--(v2) coordinate[pos=.4] (v4);
  \foreach \j in {.2,.32,.45,.55,.68,.8, .9}
    {
      \path (v3)--(v5) coordinate[pos=\j] (m0);
      \draw[Emerald!60, thin] plot [smooth,tension=.4] coordinates {(v1)(m0)(v2)};
    }
\path (3.75,3.31) coordinate (v2)
          (3.75,-3.31) coordinate (v1)
          (2.5,0) coordinate (v3) (0:5) coordinate (v5);
    \path (v1)--(v2) coordinate[pos=.4] (v4);
  \foreach \j in {.2,.32,.45,.55,.68,.8, .9}
    {
      \path (v3)--(v5) coordinate[pos=\j] (m0);
      \draw[Emerald!60, thin] plot [smooth,tension=1] coordinates {(v1)(m0)(v2)};
    }
    \path (1.25,-4.84) coordinate (v2);
    \path (v1)--(v2) coordinate[pos=.4] (v4) (2.5,-4.33) coordinate (v5);
  \foreach \j in {.2,.32,.45,.55,.68,.8, .9}
    {
      \path (v3)--(v5) coordinate[pos=\j] (m0);
      \draw[Emerald!60, thin] plot [smooth,tension=.4] coordinates {(v1)(m0)(v2)};
    }
    \path (-1.25,-4.84) coordinate (v2)
          (1.25,-4.84) coordinate (v1)
          (0,0) coordinate (v3) (270:5) coordinate (v5);
    \path (v1)--(v2) coordinate[pos=.5] (v4);
  \foreach \j in {.2,.3,.4,.5,.6,.7,.8,.9}
    {
      \path (v3)--(v5) coordinate[pos=\j] (m0);
      \draw[Emerald!60, thin] plot [smooth,tension=.4] coordinates {(v1)(m0)(v2)};
    }
\path (1.25,4.84) coordinate (v2)
          (-1.25,4.84) coordinate (v1)
          (0,0) coordinate (v3) (90:5) coordinate (v5);
    \path (v1)--(v2) coordinate[pos=.5] (v4);
  \foreach \j in {.2,.3,.4,.5,.6,.7,.8,.9}
    {
      \path (v3)--(v5) coordinate[pos=\j] (m0);
      \draw[Emerald!60, thin] plot [smooth,tension=.4] coordinates {(v1)(m0)(v2)};
    }
\draw[thick](-3.75,3.31)to(-2.5,0) (-3.75,-3.31)to(-2.5,0) (3.75,3.31)to(2.5,0)(3.75,-3.31)to(2.5,0);
\draw[cyan] (1.25,4.84)node{$\bullet$} (-1.25,4.84)node{$\bullet$}(1.25,-4.84)node{$\bullet$}(-1.25,-4.84)node{$\bullet$} (3.75,3.31)node{$\bullet$}(-3.75,3.31)node{$\bullet$}(3.75,-3.31)node{$\bullet$}(-3.75,-3.31)node{$\bullet$};
\draw[red,ultra thick]
  (-2.5,0)\ww to  (0,0)\ww to  (2.5,0)\ww ;
\end{tikzpicture}}
\caption{The foliations of the CY-2,3,4 case}
\label{fig:folCY3}
\end{figure}
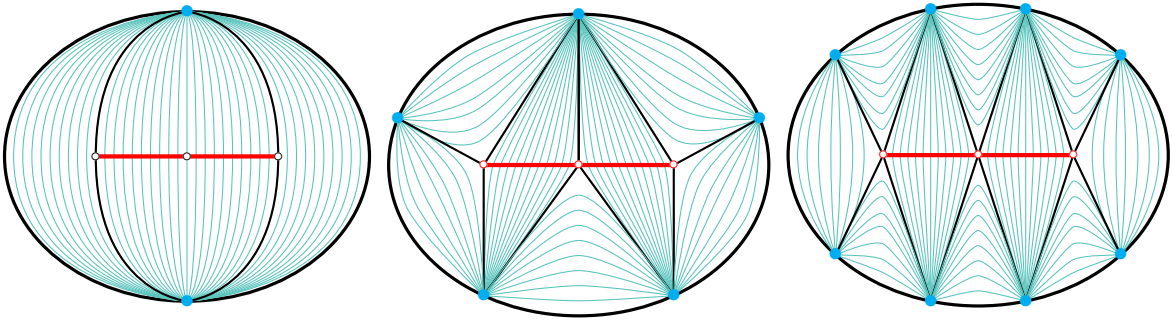

\subsection{Relation with BBL's results}
\def\DA{\Gamma_2A_2}
\def\nrsk{m(\frac{r}{s},k)}
In \Cref{def:X-Gin}, if we replace $\XX$ by an integer $N\geq 2$, we obtain $N$-Ginzburg dg algebra $\Gamma_N Q$ and the corresponding Calabi-Yau-$N$ category $\D_{N}(Q)$. That is, there is a projection
\begin{equation}
\pi_N: \Gamma_{\XX}Q\to\Gamma_N Q
\end{equation}
collapsing the double grading $\ZZ\oplus\ZZ\XX$ into $\ZZ$ by setting $\XX=N$ similar as above.
It induces a functor
$\pi_N:\D_{\XX}(Q)\to\D_{N}(Q).$
We consider the case when $Q=A_2$. For $\frac{r}{s}\in\upQ$, we claim that
\[\qdH(P_?,\Xq)=\sum_{k\in\ZZ}\nrsk\qz^{-k}\qx^k,\]
where $\nrsk$ is the occurrence times of $X_?[k-k\XX]$ in the HN-filtration of $\Xq$, $?\in\{0,\infty\}$. It holds naturally from the fact that
\begin{equation}\label{eq:delta}
\qdH(P_i, X_j[\ii+\sii\XX])=\delta_{i,j}\delta_{\ii,0}\delta_{\sii,0}
\end{equation}
where $i,j\in\{0,\infi\}$
and induction on $l(\frac{r}{s})$.

Consider the case when $N=2$ again with $\q=\qz$ and we write $\qz=q$.
In \cite{BBL}, they define two functionals
$$\occq, \uhom: \qSph^{\ZZ}(\DA)\times\qSph^{\ZZ}(\DA)\to\ZZ[q^{-1},q],$$
where $\qSph^{\ZZ}(\DA)$ is the set of spherical objects in $\D_2(A_2)$.
The former one $\occq(X_?, \Xq)$ counts the occurrences of $X_?$ in the HN-filtration of $\Xq$ for $?\in\{0,\infty\}$.
By \eqref{eq:delta}, we deduce that
\begin{equation*}
\occq(X_?,X)=\qdH(P_?,X)\mid_{\qz^{-1}\qx=q^{-1}}.
\end{equation*}
The latter one is
\begin{equation*}
\uhom(L,M):=
\begin{cases}
q^k(q^{-2}-q^{-1}),&M\cong L[k],\\
\sum_{k\in\ZZ}\dim\Hom(L,M[k])q^{-k},&\mbox{otherwise}.\end{cases}
\end{equation*}
Recall that in \Cref{def:neg}, the $q$-deformations for negative rational numbers are defined as:
\[\left[-\frac{r}{s}\right]_q^*:=-q^{-1}\left[\frac{r}{s}\right]_{q^{-1}}^*,\]
where $\frac{r}{s}\in\QQ^+\cup\{\infty\}$ and $*\in\{\sharp, \flat\}$.
\begin{corollary}
When specializing $\XX=2$,
the formulae \eqref{eq:righthom} and \eqref{eq:lefthom} in \Cref{cor:righthom} and \Cref{cor:lefthom}
become the formulae in \cite[Theorem 3.7, Theorem 3.8]{BBL} respectively.
Notice that the condition $X\geq 0$ corresponds to our $X_{-\frac{r}{s}}$ with $\frac{r}{s}\geq 0$.
\end{corollary}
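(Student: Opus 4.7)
The plan is to propagate the specialisation $\XX=2$ through the two formulas in \Cref{cor:righthom} and \Cref{cor:lefthom}, and then match the resulting expressions with the two theorems of \cite{BBL} cited in the statement. First I would apply the projection $\pi_{2}\colon\Gamma_{\XX}A_{2}\to\Gamma_{2}A_{2}$ introduced at the start of this section, so that the bigrading $\ZZ\oplus\ZZ\XX$ collapses to a single $\ZZ$-grading via $\qx=\qz^{2}$ and $\q=\qz^{-1}\qx=\qz$. I would then rename the remaining variable by $\qz=q^{-1}$ (so $\q=q^{-1}$) in order to be consistent with the identity $\occq(X_{?},X)=\qdH(P_{?},X)\big|_{\qz^{-1}\qx=q^{-1}}$ already recorded in the excerpt just before the corollary; under this specialisation the bigraded Hom polynomial becomes the ordinary graded $\k$-dimension $\sum_{n\in\ZZ}\dim_{\k}\Hom_{\D_{2}(A_{2})}(M,N[n])\,q^{-n}$.

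For the right formula, this identity lets me substitute directly into \eqref{eq:righthom}, which converts \Cref{cor:righthom} verbatim into \cite[Theorem 3.7]{BBL}. The sign factor $\varepsilon$ together with \Cref{def:neg} reproduces BBL's sign convention for negative rationals under the dictionary $X\geq 0\leftrightarrow X_{-r/s}$ with $r/s\geq 0$ asserted at the end of the corollary.

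For the left formula, I would first treat the generic range $\rat\notin\{0,\infty\}$: in that case the spherical object $\Xq$ is not isomorphic to any shift of $\Xze$ or $\Xin$, so BBL's $\uhom(\Xq,X_{?})$ falls into its \emph{otherwise} branch, which is literally the specialised Poincar\'e polynomial $\qdH(\Xq,X_{?})\big|_{\qz=q^{-1},\qx=q^{-2}}$; substituting into \eqref{eq:lefthom} then produces \cite[Theorem 3.8]{BBL}. The endpoints $\rat\in\{0,\infty\}$ follow from the base-case values $\qrb{\zero}=\qx-\qz$ and $\qsb{\infi}=1-\qz^{-1}\qx$ fixed in the proof of \Cref{thm:leftint}, combined with the initial Farey data of \Cref{qfareydef}; one then checks that under the specialisation the ratio $\qrb{\rat}/\qsb{\rat}$ equals the value produced by BBL's ad hoc rule $\uhom(L,L[k])=q^{k}(q^{-2}-q^{-1})$ at $L\in\{\Xze,\Xin\}$, up to an overall normalisation that cancels between numerator and denominator.

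The principal obstacle is exactly this endpoint reconciliation: BBL need their special-case rule to patch a collision between $\mathrm{id}_{X_{?}}$ and its Calabi--Yau-dual generator, which share the same single degree once the grading is collapsed to $\ZZ$, whereas in our bigraded framework those two generators occupy distinct bidegrees $(0,0)$ and $(0,\XX)$ and are separated automatically. Consequently the uniform formula $\qrb{\rat}/\qsb{\rat}$ works at the endpoints with no additional convention, which is precisely the \emph{slight improvement} announced in the abstract; once this is verified, both the right and left cases of the corollary are established.
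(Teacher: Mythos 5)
Your proposal is correct and matches the paper's (implicit) argument: the corollary is stated without a separate proof, and the identification rests exactly on the specialisation $\qx=\qz^{2}$, the identity $\occq(X_?,X)=\qdH(P_?,X)\mid_{\qz^{-1}\qx=q^{-1}}$, and the observation that $\uhom$ agrees with the specialised bigraded Hom away from the shift-orbits of $\Xze$ and $\Xin$ --- all of which the paper records in the paragraphs immediately preceding the corollary. Your endpoint reconciliation is strictly unnecessary for the statement as given, since \Cref{cor:lefthom} already restricts to $\frac{r}{s}\in\QQ\setminus\{0\}$ (and $\infty$ is excluded), so $\uhom$ never falls into its special-case branch here; but it correctly locates the source of the ``slight improvement'' claimed in the abstract, namely that the bidegrees $(0,0)$ and $(0,\XX)$ of the two generators of $\Ext^{\ZZ^2}(S,S)$ collide only after collapsing to a single grading.
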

\subsection{Grothendieck group interpretation}
\def\qqr{\textbf{R}_{\qz^{-1}\qx}}
\def\qqs{\textbf{S}_{\qz^{-1}\qx}}
Recall that $\cat{\XX}$ is a Calabi-Yau-$\XX$ category. The Grothendieck group $\Gro(\cat{\XX})$ admits a basis $\{[X_{\zero}], [X_{\infi}]\}$ and is a $\ZZ[\q^{\pm1}]$-module defined by the action
\begin{equation}
\qz^l\qx^k\cdot [E]:=[E[-l-k\XX]].
\end{equation}
 We have the following result.
\begin{proposition}\label{prop:Gro}
For any $\frac{r}{s}\in\uQ$, we have
\begin{equation}
[\Xq]=\qqr^{\sharp}(\frac{r}{s})[\Xze]+\qqs^{\sharp}(\frac{r}{s})[\Xin],
\end{equation}
where $\qqr^{\sharp}$ $($resp. $\qqs^{\sharp})$ is a polynomial of $\qz^{-1}\qz$ if we take $\q=\qz^{-1}\qz$ in $\qr^{\sharp}$ $($resp. $\qs^{\sharp})$.
\end{proposition}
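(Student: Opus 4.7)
The plan is to prove this by induction on the depth of the Farey decomposition of $\frac{r}{s}$, using the distinguished triangle \eqref{eq:trian} from \Cref{prop:biject} as the engine that translates the $\q$-Farey sum recursion of \Cref{qfareydef} into a relation in $\Gro(\cat{\XX})$.

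First I would dispose of the base cases. For $\frac{r}{s}\in\{\zero,\infi\}$ the identity reads $[\Xze]=\qqr^{\sharp}(\zero)[\Xze]+\qqs^{\sharp}(\zero)[\Xin]=0\cdot[\Xze]+1\cdot[\Xin]$ and $[\Xin]=1\cdot[\Xze]+0\cdot[\Xin]$, so one needs to verify that under the chosen labelling of $[\Xze]$ and $[\Xin]$ this is consistent with the normalization $\qr^{\sharp}(\zero)=0,\,\qs^{\sharp}(\zero)=1,\,\qr^{\sharp}(\infi)=1,\,\qs^{\sharp}(\infi)=0$ fixed in \Cref{qdef}; this is just reading off definitions.

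For the inductive step, I would fix $\frac{r}{s}\in\ugQ$ with Farey decomposition $\frac{r}{s}=\frac{p}{q}\oplus\frac{u}{v}$ and associated integer $l=l(\frac{p}{q},\frac{u}{v})$ as in \Cref{set}, and assume the formula for $\frac{p}{q}$ and $\frac{u}{v}$. The triangle \eqref{eq:trian}
\begin{equation*}
\Xp \longrightarrow \Xqs \longrightarrow \Xqq[(l+1)(1-\XX)] \longrightarrow \Xp[1]
\end{equation*}
yields in $\Gro(\cat{\XX})$ the identity
\begin{equation*}
[\Xqs]=[\Xp]+[\Xqq[(l+1)(1-\XX)]]=[\Xp]+\qz^{-(l+1)}\qx^{l+1}[\Xqq],
\end{equation*}
using the $\ZZ[\q^{\pm 1}]$-action $\qz^a\qx^b\cdot[E]=[E[-a-b\XX]]$. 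Substituting the induction hypothesis for $[\Xp]$ and $[\Xqq]$ and collecting coefficients of $[\Xze]$ and $[\Xin]$ gives
\begin{equation*}
[\Xqs]=\bigl(\qqr^{\sharp}(\tfrac{p}{q})+(\qz^{-1}\qx)^{l+1}\qqr^{\sharp}(\tfrac{u}{v})\bigr)[\Xze]
+\bigl(\qqs^{\sharp}(\tfrac{p}{q})+(\qz^{-1}\qx)^{l+1}\qqs^{\sharp}(\tfrac{u}{v})\bigr)[\Xin],
\end{equation*}
which under the specialization $\q=\qz^{-1}\qx$ is precisely the $\q$-Farey recursion of \Cref{qfareydef}, so the bracketed polynomials agree with $\qqr^{\sharp}(\frac{r}{s})$ and $\qqs^{\sharp}(\frac{r}{s})$.

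For negative rationals I would argue analogously, using the corresponding triangles inherited from the construction $X_{-r/s}$ via $B_{\ue_{-p/q}}(\lwe_{-u/v})$ in \Cref{prop:biject} together with the sign convention \Cref{def:neg}. There is no serious obstacle here: the proof is a straightforward bookkeeping exercise whose only delicate point is to keep track of the shift $[(l+1)(1-\XX)]$ correctly under the action of $\qz,\qx$ on the Grothendieck group and to match it with the weight $\q^{l+1}$ appearing in \Cref{qfareydef}. The real work has already been done, first in establishing the triangle \eqref{eq:trian} (which encodes \Cref{lem:ind}) and second in the $\q$-Farey formulation of $[\frac{r}{s}]_{\q}^{\sharp}$; the present proposition is essentially the Grothendieck-theoretic shadow of these two inputs.
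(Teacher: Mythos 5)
Your proposal is correct and follows essentially the same route as the paper: induction on the Farey decomposition, passing the triangle \eqref{eq:trian} to the Grothendieck group to get $[\Xqs]=[\Xp]+\qz^{-l-1}\qx^{l+1}[\Xqq]$, and matching this against the $\q$-Farey recursion of \Cref{qfareydef}. The only cosmetic difference is that you spell out the base-case normalizations and the negative case explicitly, whereas the paper treats these as immediate.
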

\begin{proof}
We only prove the non-negative case by induction on $l(\frac{r}{s})$. For the initial case, it holds for $\Xze$ and $\Xin$ obviously. We assume that it holds for $\Xp$ and $\Xqq$ where $\frac{p}{q},\frac{u}{v}$ are in \Cref{set}. For $\Xqs$, we have
\begin{equation}
\begin{split}
[\Xqs]&=[\Xp]+[\Xqq[(l+1)(1-\XX)]]\\
&=[\Xp]+\qz^{-l-1}\qx^{l+1}[\Xqq]\\
&=\qqr^{\sharp}(\frac{p}{q})[\Xze]+\qqs^{\sharp}(\frac{p}{q})[\Xin]+\qz^{-l-1}\qx^{l+1}\big(\qqr^{\sharp}(\frac{u}{v})[\Xze]+\qqs^{\sharp}(\frac{u}{v})[\Xin]\big)\\
&=\qqr^{\sharp}(\frac{p}{q}\oplus\frac{u}{v})[\Xze]+\qqs^{\sharp}(\frac{p}{q}\oplus\frac{u}{v})[\Xin],
\end{split}
\end{equation}
which implies the result.
\end{proof}
\subsection{Relation to Jones polynomials for rational case}
For every rational number $\frac{r}{s}>1$, there is an associated rational (two-bridge) knot $C(\frac{r}{s})$ in the sense of \cite{LS}. The \emph{Jones polynomial} of $C(\frac{r}{s})$ is defined via the skein relation in \cite{LS}, which is denoted by
\[V_{\frac{r}{s}}(t)\in t^{\frac{1}{2}}\ZZ[t, t^{-1}]\cup\ZZ[t, t^{-1}].\]
When we take $\q=-t^{-1}$ and multiply the leading term, we get a polynomial $J_{\frac{r}{s}}(\q)\in\ZZ[\q]$, which is called the \emph{normalized Jones polynomial}. There is a corollary directly from \cite[Theorem A.3]{BBL} and \Cref{thm:leftint}.
\begin{corollary}
For every rational $\frac{r}{s}>1$, we have $J_{\frac{r}{s}}(\q)=|\qqInt(\lwq,\lwez)\mid_{\q=\qz^{-1}\qx}|$, where $|\cdot|$ is the normalized one.
\end{corollary}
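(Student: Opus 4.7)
The plan is to chain two existing inputs: [BBL, Theorem A.3], which identifies the normalized Jones polynomial of the two-bridge knot $C(\tfrac{r}{s})$ with the numerator $\qrb{\rs}$ of the left $\q$-deformation (up to the same sign/monomial normalization denoted $|\cdot|$ in the statement), and our Theorem \ref{thm:leftint}, which realizes that numerator as a $\q$-intersection on $\disk$. There is no new geometric or categorical content to supply; the corollary is a rewriting.

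Concretely, I would first quote [BBL, Theorem A.3] in the form $J_{\frac{r}{s}}(\q) = |\qrb{\rs}|$, obtained by substituting $\q = -t^{-1}$ into $V_{\frac{r}{s}}(t)$ and clearing the leading sign and monomial so that the result lies in $\ZZ[\q]$ with nonzero constant term. Since $\frac{r}{s}>1$ lies in $\upQ$, Theorem \ref{thm:leftint} applies and gives
\[
\qrb{\rs} \;=\; \qz^{-1}\,\qqInt(\lwq,\lwez)\big|_{\q=\qz^{-1}\qx}.
\]
Substituting into BBL's formula yields
\[
J_{\frac{r}{s}}(\q) \;=\; \Big|\,\qz^{-1}\qqInt(\lwq,\lwez)\big|_{\q=\qz^{-1}\qx}\,\Big|
\;=\; \Big|\,\qqInt(\lwq,\lwez)\big|_{\q=\qz^{-1}\qx}\,\Big|,
\]
the second equality because after the substitution $\q = \qz^{-1}\qx$ the factor $\qz^{-1}$ becomes a pure monomial in $\q$, which is absorbed by the normalization $|\cdot|$.

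The only point that deserves care, and not really an obstacle, is the bookkeeping of the two normalization conventions: the one implicit in BBL's passage from $V_{\frac{r}{s}}(t)$ to $J_{\frac{r}{s}}(\q)$, and the one abbreviated $|\cdot|$ in our statement. Both target a polynomial in $\ZZ[\q]$ with positive leading term and nonzero constant term, and the potential discrepancy between $\qrb{\rs}$ and $\qz^{-1}\qqInt(\lwq,\lwez)|_{\q=\qz^{-1}\qx}$ is at worst an overall sign times a power of $\q$, which both normalizations kill. Hence the corollary follows immediately, with no input beyond Theorem \ref{thm:leftint} and [BBL, Theorem A.3].
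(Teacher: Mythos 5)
Your proposal is correct and is exactly the argument the paper intends: the corollary is stated as following "directly from [BBL, Theorem A.3] and \Cref{thm:leftint}", and your chaining of those two results, together with the observation that the stray monomial $\qz^{-1}$ is absorbed by the normalization $|\cdot|$, is precisely that deduction spelled out.
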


\end{document}